\titleformat{\section}[block]{\filcenter\normalfont\bfseries\large}{\thesection.}{.5em}{}\titlespacing*{\section}{0pt}{2\baselineskip}{1\baselineskip}
\titleformat{\subsection}[runin]{\normalfont\bfseries}{\thesubsection.}{.4em}{}[.]\titlespacing{\subsection}{0pt}{2ex plus .1ex minus .2ex}{.8em}
\titleformat{\subsubsection}[runin]{\normalfont\itshape}{\thesubsubsection.}{.3em}{}[.]\titlespacing{\subsubsection}{0pt}{1ex plus .1ex minus .2ex}{.5em}
\titleformat{\paragraph}[runin]{\normalfont\itshape}{\theparagraph.}{.3em}{}[.]\titlespacing{\paragraph}{0pt}{1ex plus .1ex minus .2ex}{.5em}
\let\originalleft\left
\let\originalright\right
\renewcommand{\left}{\mathopen{}\mathclose\bgroup\originalleft}
\renewcommand{\right}{\aftergroup\egroup\originalright}
\newcommand{{\small \input{.pdf_tex}}}[1]{{\small \input{#1.pdf_tex}}}
\newcommand{\msp}[1][]{\mathrel{\phantomsection{#1}}{}}
\definecolor{darkred}{rgb}{0.9,0,0.3}
\definecolor{darkblue}{rgb}{0,0.3,0.9}
\definecolor{orange}{rgb}{0.98, 0.6, 0.01}
\definecolor{darkgreen}{rgb}{0, 0.65, 0.1}
\definecolor{vdarkred}{rgb}{0.6,0,0.2}
\definecolor{vdarkblue}{rgb}{0,0.2,0.6}
\numberwithin{equation}{section}
\numberwithin{figure}{section}
\theoremstyle{plain} 
\newtheorem{theorem}{Theorem}[section]
\newtheorem*{theorem*}{Theorem}
\newtheorem{lemma}[theorem]{Lemma}
\newtheorem*{lemma*}{Lemma}
\newtheorem*{corollary*}{Corollary}
\newtheorem{proposition}[theorem]{Proposition}
\newtheorem*{proposition*}{Proposition}
\newtheorem*{conjecture*}{Conjecture}
\theoremstyle{definition} 
\newtheorem{definition}[theorem]{Definition}
\newtheorem*{definition*}{Definition}
\newtheorem{example}[theorem]{Example}
\newtheorem*{example*}{Example}
\newtheorem{remark}[theorem]{Remark}
\newtheorem*{remark*}{Remark}
\newtheorem*{assumption*}{Assumption}
\newcommand{\f}[1]{\boldsymbol{\mathrm{#1}}} 
\renewcommand{\r}{\mathrm}  
\newcommand{\bb}{\mathbb} 
\renewcommand{\cal}{\mathcal}
\newcommand{\ul}[1]{\underline{#1} \!\,} 
\newcommand{\ol}[1]{\overline{#1} \!\,} 
\newcommand{\txt}[1]{\text{\rm{#1}}}
\newcommand{\E}{\mathbb{E}}
\newcommand{\R}{\mathbb{R}}
\newcommand{\C}{\mathbb{C}}
\newcommand{\N}{\mathbb{N}}
\newcommand{\cov}{\mathrm{Cov}}
\newcommand{\ii}{\mathrm{i}}
\newcommand{\dd}{\mathrm{d}}
\newcommand{\col}{\vcentcolon}
\newcommand*{\deq}{\mathrel{\vcenter{\baselineskip0.65ex \lineskiplimit0pt \hbox{.}\hbox{.}}}=}
\newcommand*{\eqd}{=\mathrel{\vcenter{\baselineskip0.65ex \lineskiplimit0pt \hbox{.}\hbox{.}}}}
\newcommand{\eqdist}{\overset{\txt{d}}{=}}
\renewcommand{\leq}{\leqslant}
\renewcommand{\le}{\leqslant}
\renewcommand{\geq}{\geqslant}
\renewcommand{\ge}{\geqslant}
\renewcommand{\epsilon}{\varepsilon}
\newcommand{\OO}{\r O}
\newcommand{\oo}{\r o}
\newcommand{\ceil}[1]  {\lceil  {#1} \rceil}
\renewcommand{\prescript}[2]{\smash{\tensor[^{#1}]#2{}}}
\newcommand{\pb}[1]{\bigl({#1}\bigr)}
\newcommand{\pbb}[1]{\biggl({#1}\biggr)}
\newcommand{\pBB}[1]{\Biggl({#1}\Biggr)}
\newcommand{\abs}[1]{\lvert #1 \rvert}
\newcommand{\norm}[1]{\lVert #1 \rVert}
\newcommand{\avg}[1]{\langle #1 \rangle}
\DeclareMathOperator{\tr}{Tr}
\DeclareMathOperator{\var}{Var}
\DeclareMathOperator{\supp}{supp}
\DeclareMathOperator{\re}{Re}
\DeclareMathOperator{\im}{Im}
\DeclareMathOperator{\sgn}{sgn}
\title{Mesoscopic eigenvalue density correlations of Wigner matrices}
\author{Yukun He \and Antti Knowles}
\begin{document}

\maketitle

\begin{abstract}
We investigate to what extent the microscopic Wigner-Gaudin-Mehta-Dyson (WGMD) (or sine kernel) statistics of random matrix theory remain valid on mesoscopic scales. To that end, we compute the connected two-point spectral correlation function of a Wigner matrix at two mesoscopically separated points. In the mesoscopic regime, density correlations are much weaker than in the microscopic regime. Our result is an explicit formula for the two-point function. This formula implies that the WGMD statistics are valid to leading order on all mesoscopic scales, that in the real symmetric case there are subleading corrections matching precisely the WGMD statistics, while in the complex Hermitian case these subleading corrections are absent. We also uncover non-universal subleading correlations, which dominate over the universal ones beyond a certain intermediate mesoscopic scale. The proof is based on a hierarchy of Schwinger-Dyson equations for a sufficiently large class of polynomials in the entries of the Green function. The hierarchy is indexed by a tree, whose depth is controlled using stopping rules. A key ingredient in the derivation of the stopping rules is a new estimate on the density of states, which we prove to have bounded derivatives of all order on all mesoscopic scales.
\end{abstract}

\section{Introduction}

\subsection{Local spectral statistics of Wigner matrices}
Let $\lambda_1, \dots, \lambda_N$ be the eigenvalues of an $N \times N$ Wigner matrix $H$ -- a Hermitian random matrix with independent upper-triangular entries with zero expectation and constant variance.
We normalize $H$ so that as $N \to \infty$ its spectrum converges to the interval $[-2,2]$, and therefore its typical eigenvalue spacing is of order $N^{-1}$. In this paper we study the eigenvalue density-density correlations at two different energies. The eigenvalue density is given by the eigenvalue process $\sum_i \delta_{\lambda_i}$, which we analyse using its correlation functions\footnote{As customary, to simplify notation somewhat, we regard $\rho_1$ and $\rho_2$ as functions (densities with respect to Lebesgue measure) although the right-hand sides of \eqref{def_rho} are measures. They will always be integrated against continuous test functions, justifying such an abuse of notation.}
\begin{equation} \label{def_rho}
\rho_1(x) \deq \E \sum_i \delta(x - \lambda_i)\,, \qquad \rho_2(x,y) \deq \E \sum_{i \neq j} \delta(x - \lambda_i) \delta(y - \lambda_j).
\end{equation}
The central object of our analysis is the connected (or truncated) two-point function
\begin{equation*}
p(x,y) \deq \rho_2(x,y) - \rho_1(x) \rho_1(y)\,.
\end{equation*}
It measures eigenvalue density-density correlations at the energies $x$ and $y$. The behaviour of $p$ on small energy scales $x - y$ as $N \to \infty$ is one of the central questions of random matrix theory, ever since the seminal works of Dyson, Gaudin, and Mehta \cite{Gau,Meh,Dys}.

It is well known that the expected normalized eigenvalue density $\frac{1}{N} \rho_1(E)$ at energy $E$ is asymptotically given by the semicircle density
\begin{equation*}
\varrho_E \deq \frac{1}{2\pi}\sqrt{(4-E^2)_+}\,.
\end{equation*}
Hence, the mean eigenvalue spacing of the process $\sum_i \delta_{\lambda_i}$ at energy $E$ is $\frac{1}{N \varrho_E}$. In order to analyse the eigenvalue density correlations at small energy scales, we choose a reference energy $E \in (-2,2)$ and replace the eigenvalue process $\sum_i \delta_{\lambda_i}$ with the rescaled process $\sum_i \delta_{N \varrho_E(\lambda_i - E)}$ on the microscopic scale around energy $E$, whose mean eigenvalue spacing is one. The connected two-point function of the rescaled process $\sum_i \delta_{N \varrho_E(\lambda_i - E)}$, denoted by $p_E$, may be expressed in terms of $p$ as
\begin{equation} \label{1.3}
p_E(u,v) = \frac{1}{(N\varrho_E)^2} \, p\bigg(E+\frac{u}{N\varrho_E},E+\frac{v}{N\varrho_E}\bigg)\,.
\end{equation}

The asymptotic behaviour of $p_E$ was analysed in the works of Dyson, Gaudin, and Mehta \cite{Gau,Meh,Dys} for Gaussian $H$. They proved that
\begin{equation} \label{p_conv_GUE}
\lim_{N \to \infty} p_E(u,v) = Y_\beta(u - v)
\end{equation}
weakly in $u,v \in \R$, where $Y_\beta$ is given in the complex Hermitian case ($H = \mathrm{GUE}$, $\beta = 2$) by
\begin{equation} \label{Y_complex}
Y_2(u) \deq - s(u)^2
\end{equation}
and in the real symmetric case ($H = \mathrm{GOE}$, $\beta = 1$) by
\begin{equation} \label{Y_real}
Y_1(u) \deq - s'(u) \int_u^\infty s(v) \, \dd v - s(u)^2\,.
\end{equation}
Here
\begin{equation*}
s(u) \deq \frac{\sin (\pi u)}{\pi u}
\end{equation*}
is the \emph{sine kernel}. In the real symmetric case, $Y_1$ from \eqref{Y_real} has the asymptotic expression \cite[Equation (7.2.45)]{Meh}
\begin{equation} \label{Y_expansion}
Y_1(u) = -\frac{1}{\pi^2 u^2} + \frac{1 + \cos^2 (\pi u)}{\pi^4 u^4} + \OO\pbb{\frac{1}{u^6}}
\end{equation}
for $u \to \infty$.

In fact, the weak convergence of $p_E$ to $Y_\beta$ holds not only for the Gaussian matrices GUE and GOE but also for arbitrary Wigner matrices. This universality statement is commonly referred to as the Wigner-Gaudin-Mehta-Dyson conjecture, and it was proved in a recent series of works \cite{Agg16, ESY1, ESY2, ESY3, ESY4,ESY5,EPRSY, EPRSTY, ESYY, EYY1, EYY2, EYY3, TV1,TV2, EKYY1, EKYY2, BEYY14}.

\subsection{Results} \label{sec:outline_results}
In this paper we investigate the eigenvalue density-density correlation between energies $u$ and $v$ that are much further apart than the microscopic scale considered above. Thus, we analyse $p_E(u,v)$ on \emph{mesoscopic scales}, where $u - v$ is much larger than the microscopic scale $1$.  On mesoscopic scales, the correlations are much weaker than on microscopic scales, as can be easily guessed from the right-hand side of \eqref{p_conv_GUE} which behaves like $(u - v)^{-2}$ for $u - v \gg 1$. We investigate to what extent the asymptotic behaviour $p_E(u,v) \approx Y_\beta(u - v)$, valid on the microscopic energy scale, remains correct on mesoscopic energy scales.

As explained around \eqref{def_rho}, $p_E(u,v)$ is in general a measure and not a function, and therefore has to be integrated with respect to continuous test functions to yield a number, just as the convergence \eqref{p_conv_GUE} is in terms of the weak convergence of measures. (Pointwise convergence of $p_E(u,v)$ is in general false and even meaningless.) Thus, we test $p_E$ against some test functions $f$ and $g$ and consider the limiting behaviour of
\begin{equation} \label{p_E_convolved}
\int \dd \tilde u \, \dd \tilde v \, p_E(\tilde u, \tilde v) \, f(u - \tilde u) \, g(v - \tilde v)
\end{equation}
instead. We therefore have another choice of scale -- that of the averaging test functions $f$ and $g$ (where scale means e.g.\ the diameter of their support).
From the expressions \eqref{Y_complex} and \eqref{Y_real} it is clear that $p_E(u,v)$ oscillates on the microscopic energy scale $u - v \asymp 1$, which leads to rapid oscillations on the mesoscopic scale $u - v \gg 1$ (see Figure \ref{fig:Y}). In order to obtain a well-defined limiting behaviour of $p_E$ on mesoscopic scales, we therefore get rid of the oscillations by choosing the scale of the averaging functions $f$ and $g$ to be mesoscopic. Thus, we have two energy scales: the energy separation $u - v$, denoted by $2 N \omega$, and the averaging scale, denoted by $N \eta$. See Figure \ref{fig:spectrum}. One can think of $N \eta$ as being slightly larger than $1$, although other window scales are of interest too.

\begin{figure}[!ht]
\begin{center}
{\small 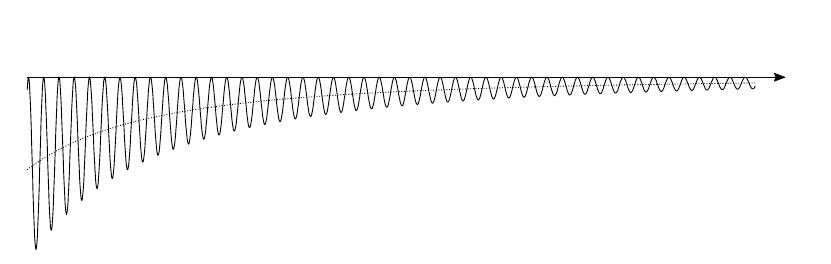}
\end{center}
\caption{A plot of $p_E(u,v)$ as a function of $u - v$ on mesoscopic scales $u - v \gg 1$ (solid), along with the function where the microscopic oscillations have been averaged out over a small energy window (dotted). \label{fig:Y}}
\end{figure}

\begin{figure}[!ht]
\begin{center}
{\small 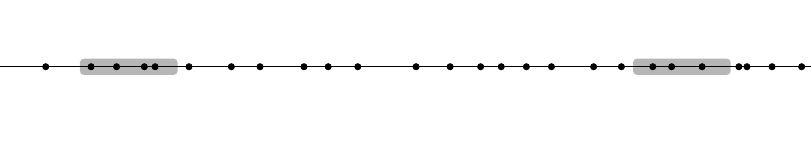}
\end{center}
\caption{The rescaled eigenvalue process $\sum_i \delta_{N \varrho_E(\lambda_i - E)}$ at some energy $E$, where each atom is represented using a dot. The typical spacing of the dots is $1$. The spectral widows of width $N \eta$ located around $N \omega$ and $-N \omega$ respectively are drawn in grey. \label{fig:spectrum}}
\end{figure}

Next, we state our main result. We consider the following class of random matrices.

\begin{definition} \label{def:wigner}
A \emph{Wigner matrix} is a Hermitian $N\times N$ matrix $H=H^{*} \in \bb C^{N \times N}$ whose entries $H_{ij}$ satisfy the following conditions.
\begin{enumerate}
	\item The upper-triangular entries ($H_{ij} \col 1 \le i \le j\le N$) are independent.
	\item We have $\bb E H_{ij}=0$ for all $i,j$.
	\item For each $p \in \bb N$ there exists a constant $C_p$ such that $\bb E|\sqrt{N}H_{ij}|^p \le C_p$ for all $i,j,N$.  
\end{enumerate}
We distinguish\footnote{We remark that the condition on the variances of the diagonal entries is imposed for convenience, and it can be easily relaxed; see Appendix \ref{sec:diag_general} for details.} the real symmetric case ($\beta=1$), where $H_{ij} \in \bb R$ and $\bb E|\sqrt{N}H_{ij}|^2=1+\delta_{ij}$ for all $i,j$, and the complex Hermitian case ($\beta=2$), where $\bb E|\sqrt{N}H_{ij}|^2=1$ for all $i,j$, and $\bb EH^2_{ij}=0$ for all $i\ne j$.
\end{definition}

We use $C^{\infty}_{c}(\bb R)$ to denote the set of real-valued smooth functions with compact support.

\begin{theorem} \label{mainthm} \label{thm:main}
Fix $0 < \tau < 1$. Let $E \in [-2+\tau,2-\tau]$ and abbreviate $\kappa_E \deq 2\pi \varrho_E=\sqrt{4-E^2}$. Let $f,g \in C^{\infty}_{c}(\bb R)$, and fix $M\geq1$ such that $\supp f$, $\supp g \subset [-M,M]$. Let $\eta,\omega$ satisfy $N^{-1+\tau}\leq \eta$, $3M\eta \le \omega\le \tau/3$. 
For any function $f$ on $\bb R$ we denote
\begin{equation*} 
f_{\pm}(u)\deq \frac{1}{N\eta }f\bigg(\frac{u\mp N\omega }{N\eta }\bigg)\,.
\end{equation*}
Let $p_E(u,v)$ be defined as in \eqref{1.3}, with $H$ as in Definition \ref{def:wigner}. We have
\begin{equation*}
\int p_{E}(u,v) f_{+}(u)g_{-}(v)  \,\mathrm{d}u\,\mathrm{d}v= \int \pb{\Upsilon_{E,\beta}(u,v) + \cal E(u,v)} \,f_{+}(u)g_{-}(v)\,\mathrm{d}u\,\mathrm{d}v\,,
\end{equation*}
where for the real symmetric case ($\beta=1$), 
	\begin{multline} \label{2.6}
		\Upsilon_{E,1}(u,v)\deq -\frac{1}{\pi^2(u-v)^2}+\frac{3}{2\pi^4(u-v)^4}
		\\
		+\frac{1}{N^2\kappa_E^2}\pbb{F_1(u,v)+F_2(u,v)\sum_{i,j}\cal C_4(H_{ij})+F_3(u,v)\sum_{i}\cal C_3({H_{ii})}}\,,
	\end{multline}
	and for the complex Hermitian case ($\beta=2$),
	\begin{equation} \label{2.7}
		\Upsilon_{E,2}(u,v)\deq-\frac{1}{2\pi^2(u-v)^2}
		+\frac{1}{N^2\kappa_E^2}\pbb{\frac{1}{2}F_1(u,v)+F_2(u,v)\sum_{i,j}\cal C_{2,2}(H_{ij})+F_3(u,v)\sum_{i}\cal C_3({H_{ii})}}\,.
	\end{equation}
	Here, $\cal C_3, \cal C_4, \cal C_{2,2}$ denote cumulants (see \eqref{cumulant_real} and \eqref{cumulant_complex} below), $F_1(u,v),...,F_3(u,v)$ are elementary bounded functions defined in \eqref{FFF} below, and 
\begin{equation} \label{def_calE}
\cal E(u,v)= \OO \pbb{\frac{1}{(u-v)^5}+\frac{1}{N^2(u-v)}}
\end{equation}
is an error term, where the implicit constants in $\OO(\cdot )$ only depend on $f,g,\tau$, and $C_p$ from Definition \ref{def:wigner}.	
\end{theorem}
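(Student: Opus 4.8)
The plan is to pass to linear eigenvalue statistics, use the Helffer--Sjöstrand formula to reduce everything to a sufficiently precise asymptotic expansion of the covariance of two traces of Green functions, obtain that expansion by a systematic Schwinger--Dyson hierarchy, and finally insert it back into a double contour integral. Concretely, after the substitution from \eqref{1.3} the left-hand side becomes a covariance of linear statistics, and since $\supp f$ and $\supp g$ are mapped to disjoint windows (using $M\eta\le\omega$), the coincidence term $\E\sum_i\phi(\lambda_i)\psi(\lambda_i)$ vanishes and
\begin{equation*}
\int p_E(u,v)\,f_+(u)\,g_-(v)\,\dd u\,\dd v = \cov\pBB{\sum_i \phi(\lambda_i)\,,\,\sum_j \psi(\lambda_j)}\,,
\end{equation*}
where $\phi,\psi\in C^\infty_c(\R)$ are built from $f$ and $g$ and are, up to the factor $(N\varrho_E)^{-1}$, approximate delta functions of width of order $\eta$ centred at $E+\omega/\varrho_E$ and $E-\omega/\varrho_E$. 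As $\phi,\psi$ are smooth with compact support, the Helffer--Sjöstrand formula gives $\sum_i\phi(\lambda_i)=\frac1\pi\int_\C\bar\partial\tilde\phi(z)\,\tr G(z)\,\dd^2 z$, $G(z)\deq(H-z)^{-1}$, with $\tilde\phi$ a high-order almost-analytic extension, so
\begin{equation*}
\cov\pBB{\sum_i \phi(\lambda_i)\,,\,\sum_j \psi(\lambda_j)} = \frac1{\pi^2}\int_\C\int_\C \bar\partial\tilde\phi(z_1)\,\bar\partial\tilde\psi(z_2)\,V(z_1,z_2)\,\dd^2 z_1\,\dd^2 z_2\,,\qquad V(z_1,z_2)\deq\cov\pb{\tr G(z_1),\tr G(z_2)}\,.
\end{equation*}
It then suffices to expand $V(z_1,z_2)$ with explicit error, uniformly for $z_1$ in a small complex neighbourhood of $E+\omega/\varrho_E$ and $z_2$ near $E-\omega/\varrho_E$, with $\absb{\im z_1}$ and $\absb{\im z_2}$ ranging down to scales of order $\eta$ (in all sign sectors), and then to evaluate the resulting double integral.

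The heart of the proof is the expansion of $V$. Following the general philosophy of recursive estimates for resolvents of Wigner matrices, I would set up a hierarchy of Schwinger--Dyson (self-consistent) equations for the expectations of a suitable closed family of monomials in the entries of $G(z_1)$ and $G(z_2)$ and their (centred) normalized traces, the two spectral parameters playing asymmetric roles. Each step uses the cumulant expansion
\begin{equation*}
\E\qb{H_{ij}\,\cal F} = \sum_{k\geq1}\frac1{k!}\,\cal C_{k+1}(H_{ij})\,\E\qB{\partial_{H_{ij}}^{\,k}\cal F} + (\text{truncation error})
\end{equation*}
and its complex analogue, together with $\partial_{H_{ij}}G_{ab}=-G_{ai}G_{jb}$ (suitably symmetrized in $i,j$). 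This produces a main term that closes the equation plus offspring monomials of higher degree and monomials weighted by higher cumulants; organizing the descendants as the vertices of a tree, each branch is terminated by a \emph{stopping rule}: it is cut once the monomial it carries is shown, via a priori bounds, to contribute only to the error term $\cal E$ in \eqref{def_calE}, or once it has a self-consistent form whose contribution can be read off exactly. The a priori inputs are the optimal local law for $G$ in the bulk, concentration estimates for linear statistics, and a new regularity estimate --- which we establish --- for the (expected) density of states and its finite-$N$ corrections, asserting that these have derivatives of all orders bounded uniformly on every mesoscopic scale; this estimate is the key input for calibrating the stopping rules. The output is
\begin{equation*}
V(z_1,z_2) = V_2(z_1,z_2) + \frac1N\,V_3(z_1,z_2)\,{\textstyle\sum_i}\cal C_3(H_{ii}) + V_4(z_1,z_2)\,{\textstyle\sum_{i,j}}\cal C_\beta(H_{ij}) + (\text{error})\,,
\end{equation*}
where $V_2$ is the universal (Gaussian) part, $\cal C_\beta=\cal C_4$ if $\beta=1$ and $\cal C_\beta=\cal C_{2,2}$ if $\beta=2$, and $V_2,V_3,V_4$ are explicit rational functions of $z_1,z_2$ and of the Stieltjes transform $m$ of the semicircle law; for $\beta=1$ the expansion of $V_2$ must be pushed one order beyond its leading term to produce the $(u-v)^{-4}$ correction matching \eqref{Y_expansion}, while for $\beta=2$ the leading term already suffices.

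It then remains to insert this expansion into the double Helffer--Sjöstrand integral and evaluate it. Deforming the two contours, using the explicit form of $m$, and computing by residues, the dominant contributions come from the structure of the integrand near the energies $E\pm\omega/\varrho_E$; after returning to the variables $u,v$ and expanding in the small parameters $\eta/\omega$ and $1/N$, the universal part $V_2$ produces the terms $-\frac1{\pi^2(u-v)^2}$ (resp.\ $-\frac1{2\pi^2(u-v)^2}$) and, for $\beta=1$, $\frac3{2\pi^4(u-v)^4}$, the terms $V_3$ and $V_4$ produce the non-universal terms with $F_1,F_2,F_3$ and the cumulant sums, and all remainders assemble into $\cal E(u,v)$ as in \eqref{def_calE}.

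The main obstacle is the scale mismatch flagged after \eqref{cov_intro}: the quantity to be computed is of order $(N\omega)^{-1}$, whereas each of the two linear statistics fluctuates on the far larger scale $(N\eta)^{-1/2}$, so any error term in the hierarchy estimated by the local law alone overshoots by a factor up to $N^{1-o(1)}$ and is useless. The stopping rules must therefore be designed so that every discarded monomial carries, on top of its local-law size, the additional smallness coming from the separation $\absb{z_1-z_2}\asymp\omega$ of the two spectral parameters --- equivalently, from the decay of the cross-correlations between $G(z_1)$ and $G(z_2)$ --- and one must show that the tree terminates at a depth depending only on the desired order of accuracy. Showing that this bookkeeping closes, i.e.\ that no branch produces terms that are simultaneously large and uncancelled, is the technical core of the argument, and it is precisely here that the density-of-states regularity estimate and a careful combinatorial control of the tree are indispensable.
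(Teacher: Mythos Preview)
Your proposal is correct and follows essentially the same route as the paper: reduce to a covariance of linear statistics (using $M\eta\le\omega$ to kill the diagonal), pass to Green functions via an almost-analytic extension, expand $\cov(\ul G(z_1),\ul G(z_2))$ through a Schwinger--Dyson/cumulant-expansion hierarchy organized as a tree with stopping rules driven by the regularity estimate $\E\,\ul{G^m}=O(1)$, and then integrate back. The only notable technical deviation is that the paper uses a variant of Helffer--Sj\"ostrand (its Lemma~\ref{lem5.1}) that splits into a boundary integral on $\partial D_a$ with $a=\sqrt{\eta/N}$ plus a small area remainder, then shifts the boundary to the real axis by Green's theorem using analyticity of the explicit covariance kernel---rather than the pure area-integral formulation you wrote followed by ``residues''; this is a packaging difference, not a strategic one.
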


More informally, Theorem \ref{thm:main} states that for a fixed energy $E \in (-2,2)$ and mesoscopic scales $1 \ll N \eta \leq N \omega \leq N$, the density-density correlation function $p_E(u,v)$ at the energies $u = N \omega$ and $v = - N \omega$, averaged in $u$ and $v$ over windows of size $N \eta$, is equal to $\Upsilon_{E, \beta}(u,v)$ up to the small error term $\cal E(u,v)$.

The first term of $\Upsilon_{E,2}(u,v)$ is precisely obtained by averaging out the microscopic fluctuations of $Y_2(u - v)$ from \eqref{Y_complex}. Similarly, the first two terms of $\Upsilon_{E,1}(u,v)$ are precisely obtained by averaging out the main terms of the expansion \eqref{Y_expansion} for $Y_1(u - v)$. The remaining terms of $\Upsilon_{E,1}(u,v)$ and $\Upsilon_{E,2}(u,v)$ are non-universal deviations (depending on the high-order cumulants of $H$) from the microscopic Wigner-Gaudin-Mehta-Dyson statistics appearing on mesoscopic scales. Such deviations have been previously observed in the macroscopic linear statistics for Wigner matrices, where $\omega$ and $\eta$ are both of order $1$ \cite{KKP95, KKP96,LP}. See Figure \ref{fig:graph} for a plot of the various terms of $\Upsilon_{E,1}(u,v)$.

\begin{figure}[!ht]
\begin{center}
{\small 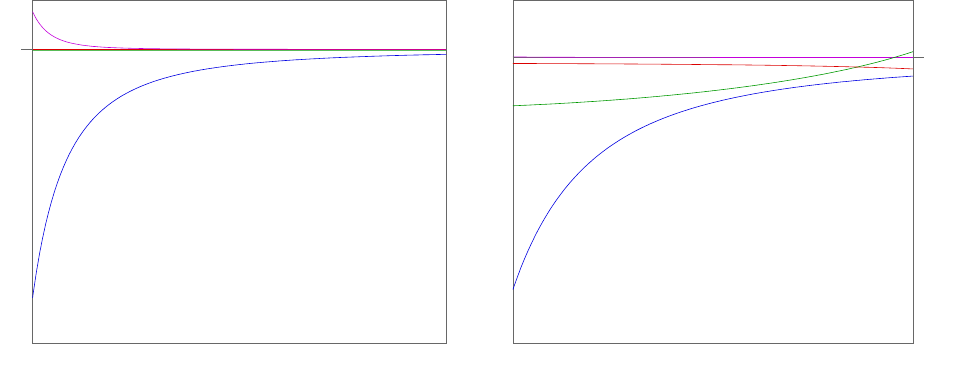}
\end{center}
\caption{A plot of the three terms of $\Upsilon_{E,1}(u,v)$ for $N = 50$. We choose $E = 0$ and $v = 0$, and plot the three terms of $\Upsilon_{E,1}(u,v)$ as a function of $u \in [1,N/2]$. For clarity, we split the plot into two pieces at $u = \sqrt{N}$. Blue: first term. Purple: second term. Red: third term for i.i.d.\ Gaussian entries. Green: third term for i.i.d.\ Bernoulli entries. The blue and and purple curves are universal, while the red and green ones are not. The plot ranges from $u = 1$ (microcscopic scale) to $u = N/2$ (macroscopic scale). On the microscopic scale, the blue and purple curves (arising from the sine kernel) are dominant and of the same order. On the intermediate mesoscopic scale $u = \sqrt{N}$, the purple, red, and green curves are of the same order. On the macroscopic scale, the blue, red, and green curves are of the same order. The non-universality of the third term of $\Upsilon_{E,1}(u,v)$ is apparent in the discrepancy between the red and green curves. The blue curve is dominant on all mesoscopic scales, but not on the microscopic ($u = 1$) and macroscopic ($u = N/2$) scales. \label{fig:graph}} 
\end{figure}

In fact, for the extreme (doubly) macroscopic case $N \eta \asymp N \omega \asymp N$, our main result (captured by $\Upsilon_{E,\beta}(u,v)$) reproduces the covariance derived in \cite{LP}. Indeed, it was proved in \cite{LP} that (for instance) for $\beta = 1$ and i.i.d.\ entries we have
\begin{multline} \label{macrosopic_variance}
\var \sum_i f(\lambda_i) = \frac{1}{2 \pi^2}\int_{-2}^2 \dd x \int_{-2}^2 \dd y \, \pbb{\frac{f(x) - f(y)}{x - y}}^2 \frac{4 - xy}{\sqrt{4 - x^2} \sqrt{4 - y^2}}
\\
+ \frac{N^2 \cal C_4(H_{12})}{2\pi^2} \pbb{\int_{-2}^2 \dd x \, f(x) \, \frac{2 - x^2}{\sqrt{4 - x^2}}}^2\,,
\end{multline}
which, by polarization and a short calculation, is easily seen (see Appendix \ref{sec:linstat}) to match with the corresponding quantity obtained from the leading order of $\Upsilon_{E,1}(u,v)$ on the macroscopic scale. 
Thus, Theorem \ref{thm:main} encompasses the macroscopic regime \eqref{macrosopic_variance} and, at the opposite end of smallest mesoscopic scales, it is consistent with the microscopic regime \eqref{p_conv_GUE} and \eqref{Y_real}. In this sense, it relates the microscopic and macroscopic regimes as endpoints of a continuous mesoscopic landscape.

We make some observations on the expressions $\Upsilon_{E,\beta}(u,v)$.
\begin{enumerate}
\item
To leading order, the Wigner-Gaudin-Mehta-Dyson correlations remain valid on all mesoscopic scales $N \omega \ll N$, failing only at the macroscopic scale $\omega \asymp 1$. (The first term of $\Upsilon_{E,2}(u,v)$, respectively of $\Upsilon_{E,1}(u,v)$, is the leading term if and only if $\omega \ll 1$. If $\omega \asymp 1$ then the second term of $\Upsilon_{E,2}(u,v)$, respectively the third term of $\Upsilon_{E,1}(u,v)$, is also of leading order.)
\item
For $\beta = 1$, the subleading corrections inherent to the universal Wigner-Gaudin-Mehta-Dyson statistics from \eqref{Y_expansion} are valid up to mesoscopic scales $N \omega \ll \sqrt{N}$. For $N \omega \gg \sqrt{N}$, the dominant subleading corrections arise from the third term of $\Upsilon_{E,1}(u,v)$, and they depend on the fourth cumulants of $H$. These non-universal corrections become of leading order at macroscopic scales $N \omega \asymp N$.
\item
Our result is completely insensitive to size of the spectral window $N \eta$ over which the function $p_E$ is averaged, provided that $N \eta \gg 1$. In particular, even at the macroscopic scale $N \omega \asymp N$, our result on the covariance is stronger than the previous macroscopic results \cite{KKP95, KKP96,LP}, where also $N \eta \asymp N$, whereas we admit any $\eta$ satisfying $N \eta \gg 1$. For instance, our result covers the density-density correlations of $N^{\oo(1)}$ eigenvalues located at opposite ends of the bulk spectrum\footnote{Here $\oo(1)$ means for any fixed (small) $\varepsilon>0$.}.
\item
The scaling factor $N \varrho_E$ of the rescaled point process (see \eqref{1.3}) is chosen so that its limiting behaviour does not depend on $E$ on the microscopic scale. Accordingly, the first term of $\Upsilon_{E,2}(u,v)$ and the first two terms of $\Upsilon_{E,1}(u,v)$ do not depend on $E$. All other terms of $\Upsilon_{E,1}(u,v)$ and $\Upsilon_{E,2}(u,v)$, however, carry a factor $\varrho_E^{-2}$, which has the interpretation that these terms are naturally associated with the macroscopic scale, where a rescaling by $\varrho_E$ makes no sense.
\item
Our method can be extended to yield further terms in the $(u - v)^{-2}$-expansion in $\Upsilon_{E,1}(u,v)$, with a corresponding higher power of $(u - v)^{-1}$ in $\cal E(u,v)$, which match the corresponding further terms in the expansion of \eqref{Y_real}, as in \eqref{Y_expansion}. For the sake of brevity we refrain from doing so in this paper.
\end{enumerate}

It is instructive to rewrite $p_E$ as follows. The density-density correlation function $p_E(u,v)$ at the energies $u$ and $v$, averaged in $u$ and $v$ over windows of size $N \eta$, can be expressed directly in terms of the eigenvalues of $H$ as
\begin{equation} \label{cov_intro}
\frac{1}{(N \varrho_E)^2} \, \cov \pBB{\sum_i f^{\eta/\varrho_E}\pbb{E + \frac{u}{N \varrho_E} - \lambda_i} \,,\, \sum_j f^{\eta/\varrho_E}\pbb{E + \frac{v}{N \varrho_E} - \lambda_j}}\,,
\end{equation}
where $f^\epsilon(x) \deq \frac{1}{\epsilon} f\pb{\frac{x}{\epsilon}}$ is an approximate delta function on scale $\epsilon$ with a compactly supported positive test function $f$ having integral one. Note that the term $\sum_i f^{\eta/\varrho_E}\pb{E + \frac{u}{N \varrho_E} - \lambda_i}$ has variance of order $\eta^{-2}$ (see e.g.\ \cite{HK}), much larger (by a factor up to $N^2$) than the covariance in \eqref{cov_intro} which is of order $\omega^{-2} = \pb{\frac{u - v}{2N}}^{-2}$. This illustrates a key difficulty in deriving expressions of the form $\Upsilon_{E,\beta}(u,v)$ for \eqref{cov_intro}, which entails computing the covariance of strongly fluctuating but weakly correlated random variables. Indeed, as explained in Section \ref{sec:sketch} below, even to obtain the leading order terms of $\Upsilon_{E,\beta}(u,v)$, we have to compute the covariance, of order $1/(N \omega)$, of two random variables whose variances are each of order $1/(N \eta)$. These two quantities may differ by a factor of up to $N^{1 - \oo(1)}$. We remark that in the regime $\omega \leq 3 M \eta$ not covered by Theorem \ref{thm:main}, the covariance is of the same order as the variances and the problem becomes much easier; it was addressed in \cite{HK}.

We conclude this subsection with previous results on the mesoscopic spectral statistics of Wigner matrices. The study of mesoscopic density-density eigenvalue correlations was initiated in \cite{Kho1} where the authors derived the first term of $\Upsilon_{E,1}(u,v)$, $-\frac{1}{\pi^2(u-v)^2}$, in the case when $H$ is GOE with the test function $f(x) = \frac{1}{\pi} \frac{1}{1 + x^2}$. In \cite{Kho2}, the same result was obtained for real symmetric Wigner matrices under the condition $\omega \gg \eta \gg N^{-1/8}$ and sufficiently many odd moments of the matrix entries vanish.

In \cite{G2005}, the eigenvalue distribution of GUE matrices (later extended to GOE matrices in \cite{O2010}) on mesoscopic scales was studied from a different point of view. Essentially, instead of computing correlations of eigenvalue \emph{densities}, \cite{G2005} addresses correlations of eigenvalue \emph{locations}. In order to summarize the results of \cite{G2005}, we order the eigenvalues $\lambda_1 \leq \lambda_2 \leq \cdots \leq \lambda_N$ and introduce the quantiles $\gamma_i$, $i = 1, \dots, N$, of the semicircle law through $i / N = \int_{-2}^{\gamma_i} \varrho_x \, \dd x$. Defining the normalized eigenvalues $\tilde \lambda_i \deq \frac{\pi \varrho_{\gamma_i}N(\lambda_i-\gamma_i)}{\sqrt{\log N}}$, it is proved in \cite{G2005} that $\tilde \lambda_i$ has asymptotically a normal distribution with variance $1$, and two normalized eigenvalues $\tilde \lambda_i$ and $\tilde \lambda_j$ are asymptotically jointly normal with covariance $\gamma$, where $\gamma = 1 - \log_N(j - i)$. Thus, the \emph{locations} of two mesoscopically separated eigenvalues have a covariance of the same order as their individual variances. This is in stark contrast to two mesoscopically separated \emph{densities}, whose correlations decrease rapidly with the separation. 

Heuristically, the strong correlations between eigenvalue locations can be attributed to the entire spectrum of the random matrix fluctuating as a semi-rigid jelly on the scale $\sqrt{ \log N}/N$. This global collective fluctuation of the eigenvalues is on a much larger scale than the typical eigenvalue spacing, $1/N$. Note that, unlike eigenvalue locations, the eigenvalue density is only weakly sensitive to a global shift of eigenvalues. We conclude that our results and those of \cite{G2005, O2010} pertain to different phenomena and are essentially independent. Although the density-density correlations of the form \eqref{cov_intro} are in principle completely characterized by the joint law of locations of pairs of mesoscopically separated eigenvalues, the precision required to see even the leading order of $\Upsilon_{E,\beta}(u,v)$ is far beyond that obtained in \cite{G2005, O2010}: the leading order of the density-density correlations arises from subleading position-position correlations. We refer to Appendix \ref{sec:Gustavsson} for a more detailed comparison of our results to \cite{G2005,O2010}.

\subsection*{Conventions}
We regard $N$ as our fundamental large parameter. Any quantities that are not explicitly constant or fixed may depend on $N$; we almost always omit the argument $N$ from our notation. Sometimes we use $c$ to denote a small positive constant. If the implicit constant in the notation $\OO(\cdot)$ depends on a parameter $\alpha$, we sometimes indicate this explicitly by writing $\OO_\alpha(\cdot)$.

\section{Outline of the proof} \label{sec:sketch}
As a starting point, we use an approximate Cauchy formula to reduce the problem from the case of general test functions to the case of the Green function $f(x)=(x-\ii)^{-1}$ (see Lemma \ref{lem5.1} and \eqref{n5.3}). The main work, therefore, is to prove the corresponding result for Green function, summarized in Theorem \ref{thm_resolvent} below. Let $G(z)\deq (H-z)^{-1}$ with $\im z \ne 0$ be the Green function, and denote by
\begin{equation*}
\ul{G}(z)\deq \frac{1}{N}\tr G(z)
\end{equation*}
the Stieltjes transform of the empirical spectral measure of $H$. The following result gives a precise computation of the covariance $\cov (\ul{G}(z_1),\ul{G}(z_2^*))$ for two mesoscopically separated spectral parameters $z_1$ and $z_2$.

\begin{theorem} \label{thm_resolvent}
Fix $\tau>0$. Let $E_1,E_2 \in [-2+\tau,2-\tau]$, $N^{-1+\tau} \leq \eta \leq \omega \leq \tau/3$, where $\omega\deq E_2-E_1>0$. Define $z_1\deq E_1+\mathrm{i}\eta$, $z_2 \deq E_2+\mathrm{i}\eta$. For the real symmetric case ($\beta=1$), we have 
		\begin{equation}	\label{2.4i}
	\begin{aligned}	
	\cov \big(\ul{G}(z_1), \ul{G}(z_2^*) \big) =&-\frac{2}{N^2(z_1-z_2^{*})^2}+\frac{f_1(z_1,z_2^*)}{N^3(z_1-z_2^{*})^3}+\frac{12}{N^4(z_1-z_2^{*})^4\kappa_E^2}
	\\&
	+\frac{1}{N^2}\Big(f_{2+}(z_1,z_2^*)+f_3(z_1,z_2^*)\sum_{i,j}\cal C_4(H_{ij})+f_4(z_1,z_2^*)\sum_{i}\cal C_3(H_{ii})\Big)\\
	&+\frac{\ii }{N^3(z_1-z_2^*)^2}\Big(-\frac{E}{\kappa_E^3}+V(E)\sum_{i,j}\cal C_4(H_{ij})\Big)+\cal E_1(\omega)\,,
	\end{aligned}
	\end{equation}
and for the complex Hermitian case ($\beta=2$), we have
	\begin{equation}	 \label{2.4ii}
	\begin{aligned}	
	&\,\cov \big(\ul{G}(z_1), \ul{G}(z_2^*) \big)\\ =&-\frac{1}{N^2(z_1-z_2^{*})^2}+\frac{1}{N^2}\Big(\,\frac{1}{2}f_{2+}(z_1,z_2^*)+f_3(z_1,z_2^*)\sum_{i,j}\cal C_{2,2}(H_{ij})+f_4(z_1,z_2^*)\sum_{i}\cal C_3(H_{ii})\Big)\\
	&+\frac{\ii V(E) }{2N^3(z_1-z_2^*)^2}\sum_{i,j}\cal C_{2,2}(H_{ij})+\cal E_1(\omega)\,.
	\end{aligned}
	\end{equation}
	Here $f_1(z_1,z_2^*),\dots,f_4(z_1,z_2^*)$, $V(E)$ are elementary bounded functions defined in \eqref{488}-\eqref{491} below, $V(E) \in \bb R$, and
	\begin{equation} \label{cal E_1}
	\cal E_1(\omega)=\OO\Big(\frac{1}{N^5\omega^5}+\frac{1}{N^3\omega}\Big)\,.
	\end{equation} 
	The implicit constants in $\OO(\cdot )$ only depend on $\tau$ and $C_p$ from Definition \ref{def:wigner}.

\begin{remark}
Analogously to Theorem \ref{thm_resolvent}, we also compute $\cov \big(\ul{G}(z_1), \ul{G}(z_2) \big)$. Indeed, we shall show that it is much smaller than $\cov \big(\ul{G}(z_1), \ul{G}(z_2^*) \big)$, in the sense that $\cov \big(\ul{G}(z_1), \ul{G}(z_2) \big)\asymp1/N^2$ while  $\cov \big(\ul{G}(z_1), \ul{G}(z_2^*) \big)\asymp 1/(N^2\omega^2)$. The precise statement is given in Proposition \ref{prop6.1} below.
\end{remark}
\end{theorem}

In the remainder of this section, we explain how to prove Theorem \ref{thm_resolvent}.
Define $\langle X \rangle \deq X-\bb E X$ for a random variable $X$ with finite expectation. Abbreviate $G=(H-z_1)^{-1}$, $F=(H-z_2)^{-1}$. The main work thus lies in computing $\bb E \langle \ul{G} \rangle \langle \ul{F^*}\rangle=\cov (\ul{G}(z_1),\ul{G}(z_2^*))$.

We give a sketch for the real symmetric case ($\beta=1$).  A fundamental source of difficulties throughout the proof is that we are computing the covariance of weakly correlated random variables, whose covariance is much smaller than their variances. More precisely, we compute the covariance of $G(z_1)$ and $G(z_2)$, which is of order $(N\omega)^{-2}$, while $G(z_1)$ and $G(z_2)$ each has a variance of order $(N\eta)^{-2}$. These two orders may differ up to a factor of $N^{2 - \oo(1)}$.

The basic idea behind the computation is to use resolvent identity $zG=HG-I$ to extract the centred random variables $H_{ij}$ from the expression, and then use the cumulant expansion formula (see Lemma \ref{lem:cumulant_expansion} below) 
\begin{equation} \label{simplecumulant}
\bb E f(h)h=\bb E|h|^2 \cdot \bb E f'(h)+\cdots
\end{equation}
for a centred random variable $h$. Unlike previous works (e.g.\ \cite{HK},\cite{LS1},\cite{HKR},\cite{EKS},\cite{HLY},\cite{H}), our central object $\bb E \langle \ul{G} \rangle \langle \ul{F^*}\rangle$ requires recursive cumulant expansions in order to control the error terms. The first expansion by \eqref{simplecumulant} leads to the Schwinger-Dyson (or self-consistent) equation
\begin{equation} \label{preeqn}
\bb E \langle \ul{G} \rangle \langle \ul{F^*} \rangle=\frac{1}{-z_1-2\bb E \ul{G}}\bigg(\frac{2}{N^2}\bb E \ul{GF^{*2}}+\frac{1}{N}\bb E \langle\ul{G^2}\rangle \langle \ul{F^*}\rangle+\bb E \langle \ul{G} \rangle^2 \langle \ul{F^*} \rangle+\cal W_1\bigg)\,,
\end{equation}
where $\cal W_1$ corresponds to the higher cumulant terms from the cumulant expansion (see \eqref{4.11} below for more details). The first term on right-hand side of \eqref{preeqn} is the leading term, and it can be shown of order $1/(N^2\omega^2)$. However, it is difficult to control the error terms in \eqref{preeqn}. For example, simply applying the local semicircle law (Proposition \ref{refthm1} below) to the third term on right-hand side of \eqref{preeqn}, we have
\begin{equation} \label{wwww}
\frac{1}{-z_1-2\bb E\ul{G}}\cdot \bb E \langle\ul{G}\rangle^2 \langle \ul{F^*}\rangle=\OO\Big(\frac{1}{N^3\eta^3}\Big)\,,
\end{equation}
which is in general not smaller than $1/(N^2\omega^2)$. This is because the large deviation estimate from the local semicircle law cannot make use of the fact that  $\langle\ul{G}\rangle^2$ and $\langle \ul{F^*} \rangle$ are weakly correlated. This difficulty is actually expected from the nature of the methodology. Trivially by the local semicircle law we have
$$
\bb E \langle \ul{G} \rangle \langle \ul{F^*}\rangle = \OO\Big(\frac{1}{N^2\eta^2}\Big)\,,
$$
and a comparison of the above and \eqref{wwww} shows that the bound from large deviation results is improved by a factor of $1/(N\eta)$ when we apply one cumulant expansion. The improvement $1/(N\eta)$ is consistent with previous works in random matrix theory (for example in the standard bootstrap argument for proving the local semicircle law, one expansion by the Schur's complement or cumulant expansion will lower the bound of $|G_{ij}-\delta_{ij}m|$ from 1 to $1/\sqrt{N\eta}$). The source of the factor $1/(N\eta)$ is the fundamental Ward identity, and it is the best we can get from any kind of single expansion. (We refer for instance to \cite[Section 3]{BK16} for a discussion of the usefulness of the Ward identity in random matrix theory.) In our context, in order to reveal the true size of the error terms in \eqref{preeqn}, we need to expand them recursively, until we obtain error bounds much smaller than $1 / (N^2 \omega^2)$. This procedure generates terms that either can be explicitly computed or are small enough owing to an accumulation of factors of $1/(N\eta)$.

The computation of $\bb E \langle \ul{G}\rangle \langle \ul{F^*}\rangle$ consists of three steps. In a first step, we show that
\begin{equation} \label{Ppeiqi}
\bb E \ul{G^m}=\OO(1)
\end{equation}
for any fixed $m\in \bb N$ (see Proposition \ref{lem4.4} below). This is a sharp and nontrivial bound (by the local semicircle law we could only get $\bb E\ul{G^m}=\OO(1/\eta^{m-1})$). As an interpretation of \eqref{Ppeiqi}, it implies that the normalized one-point function $\rho(x)\deq N^{-1}\sum_i\bb E \delta(x-\lambda_i)$, when smoothed out on any mesoscopic scale, has uniformly bounded derivatives of all order; this can be seen from the calculation
\begin{multline*}
\im \bb E \ul{G^m}=\im \int \frac{\rho(x) }{(x-z)^m}\, \dd x
= (-1)^{m-1}\im \int \frac{\rho^{(m-1)}(x)}{x-z}\, \dd x
\\
= (-1)^{m-1}\int \rho^{(m-1)}(x)\cdot \frac{\eta}{(x-E)^2+\eta^2} \, \dd x = (-1)^{m - 1} (\rho^{(m - 1)} * \theta^\eta)(E)\,,
\end{multline*}
with the Cauchy  kernel $\theta^\eta(x) = \frac{\eta}{\eta^2 + x^2}$ on scale $\eta$. Note that this estimate is optimal for $\eta\geq N^{-1+\oo(1)}$ and wrong on the microscopic scale $\eta = 1/N$. Indeed, even for the GUE, the derivative of the density of states diverges with $N$ on microscopic scales. The heuristic behind why \eqref{Ppeiqi} is nevertheless true is that the lack of boundedness of the derivatives of the density of states on microscopic scales arises from microscopic oscillations of the form $\frac{1}{N} r(E) \cos(N t(E))$ in the density of states where $r,t$ are smooth functions that do not depend on $N$. On mesoscopic scales, these oscillations are washed out and the resulting density has bounded derivatives of all order.

Now we have (see \eqref{Gm} for details)
\begin{equation} \label{P3.4}
\bb E \ul{G^m}=\frac{1}{-z_1-2\bb E \ul{G}}\bigg(\bb E \ul{G^{m-1}}+\sum_{a=1}^{m}\bb E \langle \ul{G^a}\rangle \langle\ul{G^{m+1-a}}\rangle+\sum_{a=2}^{m-1}\bb E \ul{G^a}\,\bb E\ul{G^{m+1-a}}+\frac{m}{N}\,\bb E \ul{G^{m+1}}+\cal W_2\bigg)\,,
\end{equation}
where $\cal W_2$ corresponds to the higher cumulant terms (see \eqref{Gm} below for more details). To be able to expand recursively, we need to have closed formulas for the terms in \eqref{P3.4}. Note that the first, third and fourth terms on the right-hand side of \eqref{P3.4} can already be re-expanded using \eqref{P3.4}. To deal with the second term on the right-hand side of \eqref{P3.4}, we abbreviate $\bb E Q_s\deq \bb E \langle \ul{G^{\delta_1}}\rangle\cdots \langle \ul{G^{\delta_s}}\rangle $ for $s \geq 2$ and $\delta_1,...,\delta_s \in \bb N_{+}$. By \eqref{preeqn} we have
\begin{equation} \label{P3.5}
\begin{aligned}
\bb E Q_{s}&=\frac{1}{-z_1-2\bb E \ul{G}} \bigg(\bb E Q_{s-1} \langle \ul{G^{\delta_s-1}} \rangle+
\sum_{a=0}^{\delta_s-1} \bb E Q_{s-1} \langle \ul{G^{\delta_s-a}} \rangle \langle \ul{G^{a+1}} \rangle-\sum_{a=0}^{\delta_s-1} \bb E Q_{s-1} \bb E \langle \ul{G^{\delta_s-a}} \rangle \langle \ul{G^{a+1}} \rangle\\
&\ \ +\frac{\delta_s}{N}\bb E Q_{s-1} \langle \ul{G^{\delta_s+1}} \rangle + 2\sum_{a=1}^{\delta_s-1}Q_{s-1}\langle \ul{G^{\delta_s-a}}\rangle \bb E \ul{G^{a+1}}+\frac{2}{N^2}\sum_{p=1}^{s-1}\delta_p\bb E Q_{s-1}/\langle\ul{G^{\delta_p}}\rangle\bb E \ul{G^{\delta_p+\delta_s+1}}\\
&\ \ +\frac{2}{N^2}\sum_{p=1}^{s-1}\delta_p\bb E Q_{s-1}/\langle\ul{G^{\delta_p}}\rangle\langle \ul{G^{\delta_p+\delta_s+1}}\rangle+\cal W_3\bigg)\,,
\end{aligned}
\end{equation}
where $\bb E Q_{s-1}\deq \bb E \langle \ul{G^{\delta_1}}\rangle\cdots \langle \ul{G^{\delta_{s-1}}}\rangle $, and $\cal W_3$ corresponds to the higher cumulant terms. Ignoring $\cal W_2$ and $W_3$, \eqref{P3.4} and \eqref{P3.5} now give close expansions, namely for any term we get from \eqref{P3.4} or \eqref{P3.5}, it can again be expanded using either of them. In terms of graphs, one can think of the process as a rooted tree, where $\bb E \ul{G^m}$ is the root, and the terms on right-hand side of \eqref{P3.4} are vertices in generation 1, and further expansions give the vertices in generation 2. We stop expanding a vertex when it has bound $\OO(1)$. By a careful combinatorial argument and Lemma \ref{prop4.4}, we show that the tree has finite depth. In this way we generate a locally finite tree with finite depth, which proves \eqref{Ppeiqi}. The general case with $\cal W_2$ and $\cal W_3$ will involve expansions which are slightly more complicated than \eqref{P3.4} and \eqref{P3.5}. The details of the tree structure are given at the end of Section \ref{sec:4.1} below.

In a second step, we use \eqref{Ppeiqi} in addition to \eqref{simplecumulant} to get a good bound for expectations of monomials of $\langle \ul{G}\rangle$, $\langle \ul{F^*}\rangle$, $G_{ij}$ and $F^*_{ij}$ (precise statements are given in Proposition \ref{prop4.3} below). Our result reveals the interaction between $\langle \ul{G}\rangle$ and $\langle \ul{F^*}\rangle$, which is much deeper than the fluctuation of $\langle \ul{G} \rangle$. In particular, it implies
\[
\bb E \langle \ul{G} \rangle \langle \ul{F^*}\rangle =\OO\Big(\frac{1}{N^2\omega^2}\Big)\,,
\]
which is sharp, and
\[
\frac{1}{-z_1-2\bb E\ul{G}}\cdot \bb E \langle\ul{G}\rangle^2 \langle \ul{F^*}\rangle=\OO\Big(\frac{1}{N^3\omega^3}\Big) \ll \frac{1}{N^2\omega^2}\,.
\] 
which solves the issue in \eqref{wwww}. The proof uses similar recursive expansion as in the first step whose details we omit.

In a third step, we prove Theorem \ref{thm_resolvent} by going back to \eqref{preeqn} and expanding each term on the right-hand side by hand to reveal all the explicit terms on right-hand side of \eqref{2.4i}. Equipped with the bounds proved in the second step, we show in Lemma \ref{cov} below that 
\[
\bb E \langle \ul{G} \rangle \langle \ul{F^*} \rangle=-\frac{2}{N^2(z_1-z_2^*)^2}+\OO\Big(\frac{1}{N^{2+\varepsilon}\omega^2}\Big)
\] 
for some small $\epsilon>0$. Afterwards, a small number of expansions (see Lemmas \ref{lem4.2} -- \ref{lem4.7} below) leads to other main terms in right-hand side of \eqref{2.4i}. For example, in Lemma \ref{lem44} below we show that the second term on right-hand side of \eqref{preeqn} can be expanded and leads to the term
$\frac{12}{N^4(z_1-z_2^{*})^4\kappa_E^2}$
in \eqref{2.4i}.  As an additional note, this latter term corresponds to the correction term $\frac{3}{2\pi^4(u-v)^4}$ in \eqref{2.6}, which does not appear in the complex Hermitian case \eqref{2.6}. The reason is simple from the above argument, since in the complex Hermitian case we have
\[
\bb E \langle \ul{G} \rangle \langle \ul{F^*} \rangle=\frac{1}{-z_1-2\bb E \ul{G}}\bigg(\frac{1}{N^2}\bb E \ul{GF^{*2}}+\bb E \langle \ul{G} \rangle^2 \langle \ul{F^*} \rangle+\cdots\bigg)\,,
\]
which does not contain the term $N^{-1}\bb E \langle\ul{G^2}\rangle \langle \ul{F^*}\rangle$ as in \eqref{preeqn}. More details are given in Section \ref{sec:4.4} below. 

The rest of this paper is organized as follows. In Section \ref{sec3} we introduce basic notations and tools, including estimates on the Green function from previous works; we also introduce the fundamental set of polynomials that we use throughout the proof, and record some of their basic properties. In Section \ref{sec4} we prove Theorem \ref{thm_resolvent} assuming a key estimate for a general class of polynomials of Green function, Proposition \ref{prop4.3}. In Section \ref{section5} we use Theorem \ref{thm_resolvent} and its analogue, Proposition \ref{prop6.1}, to prove our main result, Theorem \ref{mainthm}. In Section \ref{sec5} we prove Proposition \ref{prop4.3}. Finally, in Section \ref{sec7} we sketch the proof of Proposition \ref{prop6.1}. The appendices contain remarks on the connection of our results to several previous results mentioned in Section \ref{sec:outline_results} as well as a generalization on the distribution of the diagonal entries.

\section{Preliminaries}  \label{sec3}

Let $M$ be an $N \times N$ matrix. We use the notations $M_{ij}^n\equiv (M_{ij})^n$, $M^{*n}\equiv (M^{*})^n$, $M^{*}_{ij}\equiv (M^{*})_{ij} = \ol M_{ji}$, $\ul M \deq N^{-1} \tr M$. For $\im z \ne 0$, we define the Stieltjes transform $m$ of the semicircle density $\varrho$ by 
\begin{equation} \label{2.5}
m(z)\deq \int \frac{\varrho_x }{x-z}\,\dd x=\frac{1}{2\pi}\int_{-2}^{2}\frac{\sqrt{4-x^2}}{x-z}\dd x\,,
\end{equation}
so that
\[
m(z)=\frac{-z+\sqrt{z^2-4}}{2}\,.
\]
When taking the square root $\sqrt{z}$ of a complex number $z$, we always choose the branch cut at the negative real axis. This implies $\im \sqrt{z}>0$ whenever $\im z>0$, and $\im \sqrt{z}<0$ whenever $\im z<0$. We denote $\langle X \rangle \deq X-\bb E X$ for any random variable $X$ with finite expectation. Let $E \in [-2+\tau,2-\tau]$ be defined as in Theorems \ref{mainthm} and Theorem \ref{thm_resolvent}, and for the rest of the paper we abbreviate $\kappa\equiv \kappa_E\deq\sqrt{4-E^2}$.

If $h$ is a real-valued random variable with finite moments of all order, we denote by $\mathcal{C}_k(h)$ the $k$th cumulant of $h$, i.e.
\begin{equation} \label{cumulant_real}
\cal{C}_k(h) \deq  (-\mathrm{i})^k\cdot\big(\partial^k_\lambda \log \bb E e^{\mathrm{i}\lambda h} \big) \big{|}_{\lambda=0}\,.
\end{equation}
Accordingly, if $h$ is a complex-valued random variable with finite moments of all order, we denote by $\mathcal{C}_{p,q}(h)$ the $(p,q)$-cumulant of $h$, i.e.
\begin{equation} \label{cumulant_complex}
\cal{C}_{p,q}(h)\deq (-\ii)^{p+q} \cdot \left(\partial^{p}_s \partial^q_t \log \bb E e^{\mathrm{i}sh+\mathrm{i}t\bar{h}}\right) \bigg{|}_{s=t=0}\,.
\end{equation}

For a real symmetric Wigner matrix $H$, we shall use the generalized Stein Lemma \cite{Stein1981,Bar}, which can be viewed as a more precise and quantitative version of Stein's method. It was developed in the context of random matrix theory in \cite{KKP96, Kho1, Kho2, HK, HKR}. The proof of a slightly different version can be found in \cite{HKR}. 
\begin{lemma}[Cumulant expansion]\label{lem:cumulant_expansion}
Let $f\col\R\to\C$ be a smooth function, and denote by $f^{(k)}$ its $k$th derivative. Then, for every fixed $l \in\N$, we have 
\begin{equation}\label{eq:cumulant_expansion}
	\mathbb{E}\big[h\cdot f(h)\big]=\sum_{k=0}^{l}\frac{1}{k!}\mathcal{C}_{k+1}(h)\mathbb{E}[f^{(k)}(h)]+\cal R_{l+1},
\end{equation}	
provided that all expectations in \eqref{eq:cumulant_expansion} exist. Here $\cal R_{l+1}$ is a remainder term (depending on $f$ and $h$), such that for any $t>0$,
\begin{equation*} 
	\cal R_{l+1} = \OO(1) \cdot \bb E \big|h^{l+2}\cdot\mathbf{1}_{\{|h|>t\}}\big| \cdot \big\| f^{(l+1)}\big\|_{\infty} +\OO(1) \cdot \bb E |h|^{l+2} \cdot  \sup_{|x| \le t}\big|f^{(l+1)}(x)\big|\,.
\end{equation*}
\end{lemma}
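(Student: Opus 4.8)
The plan is to prove \eqref{eq:cumulant_expansion} directly in physical space rather than through the characteristic function: the polynomial part of $f$ produces the cumulant terms through an \emph{exact} identity, and the Taylor tail is precisely what generates $\cal R_{l+1}$. One may assume $\bb E\abs{h}^{l+2}<\infty$ and $\norm{f^{(l+1)}}_\infty<\infty$, since otherwise the asserted bound is vacuous (the degenerate case of almost surely bounded $h$ being handled identically, with $\norm{f^{(l+1)}}_\infty$ replaced by $\sup_{\abs{x}\le t}\abs{f^{(l+1)}(x)}$); in particular all expectations below are then finite and the cumulants $\cal C_1(h),\dots,\cal C_{l+1}(h)$ are well defined.

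First I would record the moment--cumulant recursion $\bb E[h^{m+1}]=\sum_{k=0}^{m}\binom{m}{k}\cal C_{k+1}(h)\,\bb E[h^{m-k}]$ for $0\le m\le l$, obtained by the Leibniz rule from $\chi'=\chi\cdot(\log\chi)'$ evaluated at $0$, where $\chi(\lambda)\deq\bb E\ee^{\ii\lambda h}$ is $C^{l+1}$ and nonvanishing in a neighbourhood of $0$, so that no global branch of $\log\chi$ is needed. By linearity this upgrades to the \emph{exact} form of \eqref{eq:cumulant_expansion} for every polynomial $P$, namely $\bb E[h\,P(h)]=\sum_{k\ge 0}\frac1{k!}\cal C_{k+1}(h)\,\bb E[P^{(k)}(h)]$. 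Next I would split $f=P+R$, with $P$ the degree-$l$ Taylor polynomial of $f$ at $0$ and $R(x)=\frac{x^{l+1}}{l!}\int_0^1(1-s)^l f^{(l+1)}(sx)\,\dd s$; since $P^{(k)}$ is exactly the degree-$(l-k)$ Taylor polynomial of $f^{(k)}$ at $0$, one has $\bb E[P^{(k)}(h)]=\bb E[f^{(k)}(h)]-\bb E[R_k(h)]$ with $R_k(x)=\frac{x^{l-k+1}}{(l-k)!}\int_0^1(1-s)^{l-k}f^{(l+1)}(sx)\,\dd s$. Substituting into the polynomial identity and collecting the derivatives $f^{(0)},\dots,f^{(l)}$ into the main sum leaves
\begin{equation*}
\cal R_{l+1}=\bb E\qb{h\,R(h)}-\sum_{k=0}^{l}\frac1{k!}\,\cal C_{k+1}(h)\,\bb E\qb{R_k(h)}\,.
\end{equation*}

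It then remains to estimate $\cal R_{l+1}$. From the integral-form remainders, $\abs{h\,R(h)}\le\frac{\abs{h}^{l+2}}{(l+1)!}\sup_{\abs{y}\le\abs{h}}\abs{f^{(l+1)}(y)}$ and $\abs{R_k(h)}\le\frac{\abs{h}^{l+1-k}}{(l-k+1)!}\sup_{\abs{y}\le\abs{h}}\abs{f^{(l+1)}(y)}$. I would then bound the local supremum by $\sup_{\abs{x}\le t}\abs{f^{(l+1)}(x)}+\norm{f^{(l+1)}}_\infty\mathbf{1}_{\{\abs{h}>t\}}$, use $\abs{\cal C_{k+1}(h)}\le C_k\,\bb E\abs{h}^{k+1}$ (cumulants being universal polynomials in the moments), and finally apply the elementary inequalities $\bb E\abs{h}^a\,\bb E\abs{h}^b\le\bb E\abs{h}^{a+b}$ and $\bb E\abs{h}^a\cdot\bb E[\abs{h}^b\mathbf{1}_{\{\abs{h}>t\}}]\le 2\,\bb E[\abs{h}^{a+b}\mathbf{1}_{\{\abs{h}>t\}}]$, which follow from Jensen/Hölder applied to $\abs{h}$ and, for the second, to the truncated variable. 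This collapses every term onto the two scales $\bb E[\abs{h}^{l+2}\mathbf{1}_{\{\abs{h}>t\}}]\norm{f^{(l+1)}}_\infty$ and $\bb E\abs{h}^{l+2}\sup_{\abs{x}\le t}\abs{f^{(l+1)}(x)}$ with a constant depending only on $l$, which is the claimed form of the remainder.

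I expect this last step to be the main obstacle. The cross-remainder terms carry a cumulant prefactor $\cal C_{k+1}(h)$ but only the lower power $\abs{h}^{l+1-k}$, so a crude term-by-term estimate would leave a sum of mixed moments $\bb E\abs{h}^{k+1}\,\bb E\abs{h}^{l+1-k}$ rather than the single homogeneous scale $\bb E\abs{h}^{l+2}$; recovering the stated form is exactly what the moment-product inequalities accomplish, and they must be invoked in both the truncated and untruncated regimes. The combinatorial rearrangement of the double sum in the polynomial step is routine, but has to be carried out carefully so that no spurious $f^{(l+1)}(0)$-type contribution survives in the main sum and the error genuinely reduces to the $R$ and $R_k$ terms.
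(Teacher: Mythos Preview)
The paper does not prove this lemma at all: it is stated with a pointer to the literature (``The proof of a slightly different version can be found in \cite{HKR}''), so there is no in-paper argument to compare against. Your self-contained proof is correct. The moment--cumulant recursion gives the exact identity for polynomials, the Taylor decomposition $f=P+R$ with $P$ of degree $l$ isolates the remainder cleanly, and your observation that $P^{(k)}$ is the Taylor polynomial of $f^{(k)}$ makes the bookkeeping transparent. The remainder estimate is also sound: the bound $\sup_{\abs{y}\le\abs{h}}\abs{f^{(l+1)}(y)}\le \sup_{\abs{x}\le t}\abs{f^{(l+1)}(x)}+\norm{f^{(l+1)}}_\infty\mathbf 1_{\{\abs{h}>t\}}$ is exactly what is needed, the cumulant bound $\abs{\cal C_{k+1}(h)}\le C_k\,\bb E\abs{h}^{k+1}$ follows from homogeneity and Lyapunov, and your truncated product inequality $\bb E\abs{h}^a\cdot\bb E[\abs{h}^b\mathbf 1_{\{\abs{h}>t\}}]\le 2\,\bb E[\abs{h}^{a+b}\mathbf 1_{\{\abs{h}>t\}}]$ is justified by splitting $\bb E\abs{h}^a\le t^a+\bb E[\abs{h}^a\mathbf 1_{\{\abs{h}>t\}}]$ and applying Lyapunov on the conditional law of $\abs{h}$ given $\{\abs{h}>t\}$.

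For context, the argument in the cited references typically runs through the characteristic function and Fourier inversion rather than a physical-space Taylor expansion; your route is arguably more elementary and makes the dependence of the remainder on $f^{(l+1)}$ and on the tail of $h$ completely explicit, at the cost of the small moment-product gymnastics you flagged in the last paragraph.
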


We have a complex analogue of the above lemma, which will be used to deal with the complex Hermitian case.
\begin{lemma}[Complex cumulant expansion] \label{lem:5.1}
Let $f\col\bb C^2 \to \bb C$ be a smooth function, and we denote its holomorphic  derivatives by
\begin{equation*} 
f^{(p,q)}(z_1,z_2)\deq \partial^p_{z_1} \partial^q_{z_2} f(z_1,z_2)\,.
\end{equation*} 
Then for any fixed $l \in \bb N$, we have
\begin{equation} \label{5.16}
\bb E f(h,\bar{h})\bar{h}=\sum\limits_{p+q\leq l} \frac{1}{p!\,q!}\mathcal{C}_{p,q+1}(h)\bb E f^{(p,q)}(h,\bar{h}) + \cal R_{l+1}\,,
\end{equation}
assuming that all expectations in \eqref{5.16} exists, where $\cal R_{l+1}$ is a remainder term (depending on $f$ and $h$), such that for any $t>0$,
\begin{equation*}
\begin{aligned}
\cal R_{l+1}&=\OO(1)\cdot \bb E \big|h^{l+2}\cdot\mathbf{1}_{\{|h|>t\}}\big|\cdot \max\limits_{p+q=l+1}\big\| f^{(p,q)}(z,\bar{z})\big\|_{\infty} \\
&\msp +\OO(1) \cdot \bb E |h|^{l+2} \cdot \max\limits_{p+q=l+1}\big\| f^{(p,q)}(z,\bar{z})\cdot \mathbf{1}_{\{|z|\le t\}}\big\|_{\infty}\,.
\end{aligned}
\end{equation*}
\end{lemma}

The following result gives bounds on the cumulants of the entries of $H$.
\begin{lemma} \label{lem} 
\begin{enumerate}
	\item 
Let $H$ be a real Wigner matrix satisfying Definition \ref{def:wigner}. For every $i,j = 1, \dots, N$ and $k\ge 2$ we have
\begin{equation*}
\cal C_{k}(H_{ij})=\OO_k\big(N^{-k/2}\big)
\end{equation*}
and $\cal C_{1}(H_{ij})=0$.
\item 
Let $H$ be a complex Wigner matrix satisfying Definition \ref{def:wigner}. For every $i,j = 1, \dots, N$ and $p,q\in \N_+$ we have
\begin{equation*}
\cal C_{p,q}(H_{ij})=\OO_{p,q}\big(N^{-(p+q)/2}\big)
\end{equation*}
and $\cal C_{1,0}(H_{ij})=\cal C_{0,1}(H_{ij})=0$.
\end{enumerate}
\end{lemma}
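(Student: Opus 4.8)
The plan is to prove Lemma \ref{lem} directly from the defining formulas \eqref{cumulant_real} and \eqref{cumulant_complex} for the cumulants, together with the moment bounds in condition (iii) of Definition \ref{def:wigner}. First I would recall the standard combinatorial identity expressing the $k$th cumulant $\cal C_k(h)$ as a polynomial in the moments $\bb E h, \bb E h^2, \dots, \bb E h^k$ — concretely, $\cal C_k(h) = \sum_{\pi} (|\pi|-1)!\,(-1)^{|\pi|-1} \prod_{B \in \pi} \bb E h^{|B|}$, the sum running over partitions $\pi$ of $\{1,\dots,k\}$ (this follows by expanding the logarithm in \eqref{cumulant_real} as a formal power series and matching Taylor coefficients). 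The point is that $\cal C_k(h)$ is a finite universal polynomial, with bounded coefficients, in the moments of $h$ of order at most $k$.

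Next I would invoke condition (iii): since $\bb E |\sqrt{N} H_{ij}|^p \le C_p$, we get $\bb E |H_{ij}|^p \le C_p N^{-p/2}$, hence $|\bb E H_{ij}^m| \le \bb E|H_{ij}|^m \le C_m N^{-m/2}$ for each $m$. Now observe that in each monomial $\prod_{B\in\pi}\bb E H_{ij}^{|B|}$ appearing in the cumulant formula, the exponents $|B|$ sum to $k$, so the product is bounded by $\prod_{B\in\pi} C_{|B|} N^{-|B|/2} = \big(\prod_{B} C_{|B|}\big) N^{-k/2}$. Summing over the finitely many partitions $\pi$ of $\{1,\dots,k\}$ yields $|\cal C_k(H_{ij})| \le O_k(1)\, N^{-k/2}$, which is the claimed bound. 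The statement $\cal C_1(H_{ij}) = \bb E H_{ij} = 0$ is immediate from condition (ii). For the complex case I would argue identically, using the analogous expansion of $\cal C_{p,q}(h)$ as a polynomial in the mixed moments $\bb E h^a \bar h^b$ with $a+b \le p+q$ obtained from \eqref{cumulant_complex}; each such mixed moment is bounded by $\bb E|H_{ij}|^{a+b} \le C_{a+b} N^{-(a+b)/2}$, and the same degree-counting argument gives the total homogeneity $N^{-(p+q)/2}$. The vanishing $\cal C_{1,0} = \cal C_{0,1} = \bb E H_{ij} = 0$ again follows from condition (ii).

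There is no real obstacle here — the result is an elementary consequence of the moment-cumulant relations and the uniform moment bounds. The only point requiring a modicum of care is to make the homogeneity bookkeeping precise: one must note that in the moment-cumulant formula for $\cal C_k$ every term is a product of moments whose orders partition $k$ (so the $N$-powers always multiply to exactly $N^{-k/2}$, and never worse), and similarly for the bivariate cumulant that every term is a product of mixed moments whose total degrees partition $p+q$. Since the number of partitions of a $k$-element set is a fixed finite number (the Bell number $B_k$), and the coefficients $(|\pi|-1)!$ are absolute constants, collecting everything gives the constant $O_k(1)$ (depending only on $k$ and on $C_2,\dots,C_k$ from Definition \ref{def:wigner}), and likewise $O_{p,q}(1)$ in the complex case.
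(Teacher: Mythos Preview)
Your argument is correct and is the standard proof via the moment--cumulant relations. The paper does not give a proof of this lemma at all, treating it as an elementary consequence of Definition \ref{def:wigner}; your write-up supplies exactly the expected justification.
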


We shall repeatedly us the following identity, whose proof is obvious.
\begin{lemma} [Resolvent identity] \label{resolvent}
For complex numbers $z_1$ and $z_2$ satisfying $z_1 \ne z_2$, $\im z_1\ne 0$, and $\im z_2 \ne 0$, we have
\begin{equation*} 
G(z_1)G(z_2)=\frac{G(z_1)-G(z_2)}{z_1-z_2}\,.
\end{equation*}
\end{lemma}

The following definition introduces a notion of a high-probability bound that is suited for our purposes. It was introduced (in a more general form) in \cite{EKYY4}.
\begin{definition}[Stochastic domination] \label{def:2.3} 
Let $$X=\pb{X^{(N)}(u)\col N \in \bb N, u \in U^{(N)}}\,,\qquad Y=\pb{Y^{(N)}(u)\col N \in \bb N, u \in U^{(N)}}$$ be two families of nonnegative random variables, where $U^{(N)}$ is a parameter set which may depend on $N$. We say that $X$ is stochastically dominated by $Y$, uniformly in $u$, if for all (small) $\varepsilon>0$ and (large) $D>0$ we have
\begin{equation*}
\sup\limits_{u \in U^{(N)}}	\bb P \left[ X^{(N)}(u) > N^{\varepsilon} Y^{(N)}(u) \right] \le N^{-D}
\end{equation*} 
for large enough $N \ge N_0(\varepsilon,D)$. If $X$ is stochastically dominated by $Y$, we use the notation $X \prec Y$. The stochastic domination will always be uniform in all parameters, such as $z$ and matrix indices, that are not explicitly stated to be constant.
\end{definition}

We now state the local semicircle law for Wigner matrices from \cite{EKYY4,EYY3}. For a recent survey of the local semicircle law, see \cite{BK16}, where the following version of the local semicircle law is stated.
\begin{proposition}[Local semicircle law] \label{refthm1}
Let $H$ be a Wigner matrix satisfying Definition \ref{def:wigner}, and define the spectral domain 
$$ 
{\bf S}  \deq  \{E+\mathrm{i}\eta\col |E| \le 10, 0 <  \eta \le 10 \}\,.
$$
Then we have the bounds
\begin{equation}  \label{afterall}
\max\limits_{i,j}|G_{ij}(z)-\delta_{ij}m(z)| \prec \sqrt{\frac{\im m(z)}{N\eta}}+
\frac{1}{N\eta}
\end{equation} 
and
\begin{equation} \label{3.4}
|\underline{G}(z)-m(z)| \prec \frac{1}{N\eta}\,,
\end{equation}
uniformly in $z =   
E+\mathrm{i}\eta \in {\bf S}$. 
\end{proposition}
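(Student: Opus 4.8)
The plan is to follow the now-standard self-consistent equation and fluctuation-averaging strategy for the local law, essentially as in \cite{EKYY4,EYY3} (see also the survey \cite{BK16}). Throughout, $m = m(z)$ denotes the Stieltjes transform \eqref{2.5} of the semicircle law, equivalently the solution of $m + (z+m)^{-1} = 0$ with $\im m \cdot \im z > 0$, and $\kappa$ denotes the distance from $E$ to $\{-2,2\}$. Write $G = G(z)$, let $G^{(T)}$ be the resolvent of the minor of $H$ with the rows and columns indexed by $T$ removed, and let $\E_i$ denote partial expectation over the $i$-th row and column of $H$. The two control quantities are $\Lambda \deq \max_{i,j}\absb{G_{ij} - \delta_{ij}m}$ and $\Theta \deq \absb{\ul G - m}$, and the target error size is $\Psi \deq \sqrt{\tfrac{\im m + \Lambda}{N\eta}} + \tfrac{1}{N\eta}$.

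First I would record the algebraic identities coming from Schur's complement formula:
\[
\frac{1}{G_{ii}} = -z - \ul{G} + \Upsilon_i\,, \qquad \Upsilon_i \deq H_{ii} - Z_i + \pb{\ul G - \ul{G^{(i)}}}\,, \quad Z_i \deq \sum_{k,l}^{(i)}\pb{\E_i - 1}\qb{H_{ik}\,G^{(i)}_{kl}\,H_{li}}\,,
\]
together with $G_{ij} = -G_{ii}\,G^{(i)}_{jj}\pb{H_{ij} - \sum_{k,l}^{(ij)} H_{ik}G^{(ij)}_{kl}H_{lj}}$ for $i \ne j$. The discrepancy $\ul G - \ul{G^{(i)}}$ is $O(1/N)$ times a factor bounded in terms of $\Lambda$, by the resolvent identity linking $G$ and $G^{(i)}$. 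On the a priori event $\{\Lambda \le (\log N)^{-1}\}$, the large deviation bound for quadratic forms in the independent entries of row $i$ (Hanson--Wright type), combined with the Ward identity $\sum_k \abs{G^{(i)}_{jk}}^2 = \im G^{(i)}_{jj}/\eta$, gives $\max_i \abs{\Upsilon_i} \prec \Psi$ and $\max_{i \ne j}\abs{G_{ij}} \prec \Psi$. Substituting the first bound into the Schur relation and subtracting the defining equation for $m$ reduces matters to a perturbed scalar self-consistent equation for $v \deq \ul G - m$; its stability on ${\bf S}$ — the statement that the relevant derivative of the map $w \mapsto w + (z+w)^{-1}$ is bounded below by $c\sqrt{\kappa+\eta}$, so that an error of size $\delta$ is amplified to at most $\delta/\sqrt{\kappa+\eta}$ provided $\delta$ stays small — yields the self-improving estimate $\Lambda \prec \Psi$ on $\{\Lambda \le (\log N)^{-1}\}$. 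Solving this fixed-point inequality (the $\Lambda$ under the square root is absorbed once $\Lambda$ is already small) gives $\Lambda \prec \sqrt{\im m/(N\eta)} + (N\eta)^{-1}$.

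Next I would remove the a priori hypothesis by a continuity argument in $\eta$: at $\eta = 10$ the trivial bound $\norm{G(z)} \le \eta^{-1}$ forces $\Lambda \ll 1$; decreasing $\eta$ along a lattice of mesh $N^{-10}$ and using Lipschitz continuity of $G$ in $\eta$ together with the dichotomy built into the self-improving estimate (which forbids $\Lambda$ from lying in an intermediate band), one propagates $\Lambda \le (\log N)^{-1}$ down to the smallest $\eta$, and hence \eqref{afterall} holds throughout ${\bf S}$. Finally, to obtain the stronger bound \eqref{3.4} I would average the Schur relations over $i$: the point is that $\Theta$ is then governed by $\absb{\tfrac1N\sum_i \Upsilon_i}$ rather than $\max_i\abs{\Upsilon_i}$, and the fluctuation-averaging lemma asserts that averaging the weakly dependent $Z_i$ produces an additional gain of a factor $\sqrt{\im m/(N\eta)} + (N\eta)^{-1/2}$, so that $\absb{\tfrac1N\sum_i Z_i} \prec (N\eta)^{-1}$ once the first bound on $\Lambda$ is in hand (using $\im m \le 1$). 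Feeding this into the averaged self-consistent equation and invoking stability once more gives $\Theta \prec (N\eta)^{-1}$.

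I expect the fluctuation-averaging lemma to be the main obstacle: proving that $\tfrac1N\sum_i Z_i$ is a factor $\sim \sqrt{N\eta}$ smaller than a single $Z_i$ requires estimating high moments $\E\absb{\tfrac1N\sum_i Z_i}^{2p}$ and carefully exploiting the cancellations from independence of distinct rows, which means organizing the combinatorics of the various minor resolvents $G^{(T)}$ that appear and showing that diagonal coincidences among summation indices are essentially the only terms that survive — a genuinely global statement about the resolvent, not reducible to the entrywise bound \eqref{afterall}. A secondary difficulty is keeping the stability estimate for the self-consistent equation uniform up to the spectral edges $E = \pm 2$, where the factor $\sqrt{\kappa+\eta}$ degenerates and must be carried explicitly through the perturbative analysis.
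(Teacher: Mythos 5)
The paper does not prove this proposition itself: it quotes it from \cite{EKYY4,EYY3}, in the form stated in the survey \cite{BK16}, so the ``paper's proof'' is precisely the standard argument of those references. Your outline — Schur complement identities, Ward identity plus large-deviation bounds for the quadratic forms, stability of the self-consistent equation with the $\sqrt{\kappa+\eta}$ factor, a continuity/bootstrap argument in $\eta$ to remove the a priori assumption, and fluctuation averaging of the $Z_i$ to upgrade the entrywise bound to $|\ul G - m| \prec (N\eta)^{-1}$ — is exactly that argument, correctly identifying the fluctuation-averaging step and the edge-uniform stability as the substantive points, so it takes essentially the same route and I see no gap in it.
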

The following lemma is a preliminary estimate on $G$. It provides a priori bounds on entries of powers of $G$ which are significantly better than those obtained by a direct application of the local semicircle law.
\begin{lemma}[Lemma 4.4,\cite{HK}] \label{prop4.4}
Let $H$ be a Wigner matrix satisfying Definition \ref{def:wigner}. Fix $\alpha \in [0,1)$ and $E \in (-2,2)$. Let $G\equiv G(z)=(H-z)^{-1}$, where $z\deq E+\mathrm{i}\eta$ and $\eta\deq N^{-\alpha}$. For any fixed $k \in \bb N_{+}$ we have
\begin{equation*} 
\big|\big\langle \ul{G^{k}} \big\rangle \big|\prec N^{(k-1)\alpha-(1-\alpha)}
\end{equation*}
as well as
\begin{equation*} 
\big|\big(G^k\big)_{ij}\big|\prec
\begin{cases}
N^{(k-1)\alpha} & \txt{if } i = j
\\
N^{(k-1)\alpha-(1-\alpha)/2} & \txt{if } i \neq j\,,
\end{cases}
\end{equation*}
uniformly in $i,j$.
\end{lemma}
We now introduce some additional notations that will be used frequently in the proof of Theorem \ref{thm_resolvent}. To motivate them, we note that the proof relies on a calculus of products of expectations of random variables of the type $G^{m}_{ij}$, $\ul{G^m}$, $\avg{\ul{G^m}}$ evaluated at $z_1$ and $z_2$, as well as their complex conjugates, e.g.
\begin{equation*}
a(z_1, z_2) \, N^{3/2} \, \E \ul{G^2}(z_1) \ul{G}(z_2^*)\,, \qquad
a_{i_1 i_2 i_3 i_4}(z_1, z_2)\, N^{-1/2} \, \bb E G_{i_1i_2}(z_1)G_{i_3i_4}(z_2^*)\E \avg{\ul{G^2}(z_2^*)}\,,
\end{equation*}
where $a(z_1, z_2)$ and $a_{i_1 i_2 i_3 i_4}(z_1, z_2)$ are uniformly bounded functions that may depend on $N$. It is convenient to classify such expressions depending on the exponent $t \in \R$ and on the number $n$ of indices $i_k$. Below, we introduce the notation $\cal P^{(n,t)}(\cal A)$ for the set of such expressions, where $\cal A$ is the set of matrices appearing in them, in the above examples $\cal A = \{G(z_1), G(z_2^*)\}$.

To that end, we define a set of formal monomials in a set of formal variables. Here the word \emph{formal} refers to the fact that these objects are purely symbolic and we do not assign any values to variables or monomials. The formal variables are constructed from a finite set of formal matrices $\cal A$ and the infinite set of formal indices $\{i_1, i_2, \dots\}$.
\begin{itemize}
\item
Set $\cal T(\cal A)\deq\{\ul{A^m}\col A \in \cal A, m \ge 1\}$.
\item
For $n\in \N$ denote by $\cal M^{(n)}(\cal A)$ the set of monomials with coefficient $1$ in the variables $A^m_{ x  y}$ and $\langle \ul{A^m}\rangle$, where $A \in \cal A$, $m \in \bb N_+$, and $ x,  y\in \{i_1,\ldots,i_n \}$.
\item
Let $\cal P^{(n,t)}(\cal A)$ be the set of monomials with coefficient $N^{t}$ in the variables $\bb E X$, where $t \in \bb R$, $X \in \cal T(\cal A) \cup \cal M^{(n)}(\cal A)$.
\item
Set $\cal P^{(n)}(\cal A)\deq \bigcup_{t}\cal P^{(n,t)}(\cal A)$ and $\cal P(\cal A)\deq \bigcup_{n}\cal P^{(n)}(\cal A)$.
\end{itemize}
Note that $ \tilde{\cal A} \subset \cal A$ implies $\cal P^{(n,t)}( \tilde{\cal A}) \subset \cal P^{(n,t)}(\cal A)$ for all $t$ and $n$.
Next, we define the following maps $\nu_1,\ldots,\nu_6\col \cal P(\cal A)\to \N$.
\begin{enumerate}
\item $\nu_{1} (P)$ = sum of $m-1$ of all $\ul {A^m}$ in $P$ with $A \in \cal A$.
\item $\nu_2(P)$ = sum of $m-1$ of all $\langle \ul {A^m} \rangle$ and $(A^m)_{ x  y}$ in $P$ with $A \in \cal A$.
\item $\nu_3(P)$ = number of $\ul{A^m}$ in $P$ with $m \ge 2$ and $A \in \cal A$.
\item $\nu_4(P)$ = number of $\langle \ul{A^m} \rangle$ in $P$ with $A \in \cal A$.
\item $\nu_5(P)$ = number of $k \in \N_+$ such that the index $i_k$ appears an odd number of times in $P$.
\item $\nu_6(P)$ = number of $k \in \N_+$ such that the index $i_k$ appears an even number of times in $P$ and $i_k$ appears in at least one $(A^m)_{ x  y}$ with $x \ne y$ and $A \in \cal A$.
\end{enumerate}

Next, we assign to each monomial $P\in \cal P^{(n,t)}(\cal A)$ a \emph{value} $P_{i_1 \dots i_n} \in \C$ as follows. Suppose that the set $\cal A$ consists of $N \times N$ random matrices. Then for any $n$-tuple $(i_1, \dots, i_n) \in \{1, \dots, N\}^n$ we define the number $P_{i_1 \dots i_n}$ as the one obtained by taking the formal expression $P$ and evaluating it with the laws of the matrices in $\cal A$ and the numerical values of $i_1, \dots, i_n$.

In the following arguments, the set $\cal A$ will consist of Green functions of $H$ for various values of the spectral parameter $z$, and the indies $i_1, \dots, i_n$ will be summed over.

We give an example to illustrate the above definitions.
\begin{example}
	Let $\cal A=\{A,B\}$ and set
	\begin{equation*}
	P\deq N^{t}\bb E \ul{A^3} \,\bb E\ul{B^{*4}} \,\bb E A^2_{i_1i_2} (B^2)_{i_3i_3}  (A^2)_{i_2i_4} \, \bb E A_{i_4i_1} B_{i_5i_6}\langle \ul{A^7} \rangle\,.
	\end{equation*}
	Clearly, $P \in \cal P^{(6,t)}(\cal A) \subset \cal P(\cal A)$. Then $\nu_1(P)=(3-1)+(4-1)=5$, $\nu_2(P)=(2-1)+(2-1)+(2-1)+(1-1)+(1-1)+(7-1)=9$, $\nu_3(P)=2$, and $\nu_4(P)=1$. We also see that $\nu_5(P)=2$, and the two corresponding indices are $i_5$ and $i_6$; $\nu_6(P)=3$, and the corresponding indices are $i_1, i_2, i_4$. 
	
	Furthermore, let $G$ be as in Lemma \ref{prop4.4} and we explicitly set $A=G$, $B=G^*$. We have, by Lemma \ref{prop4.4},
	\[
	\sum_{i_1,...,i_6} P_{i_1,...,i_6} \prec  N^4\cdot N^{t} \cdot N^{8\alpha}\cdot N^{7\alpha-1} \cdot \sum_{i_5,i_6}\bb E|G^{*}_{i_5i_6}| \prec N^{t+9/5+31\alpha/2}=N^{t+6+\alpha(\nu_1+\nu_2)-(1-\alpha)\nu_4-(1-\alpha)\nu_5/4}\,.
	\]
\end{example}

In the light of the above example, we have the following priori bound for general $P$. It is a relatively straightforward consequence of Lemma \ref{prop4.4}.
\begin{lemma} \label{lem:4.2}
Let us adopt the conditions of Theorem \ref{thm_resolvent}. Let $\cal A=\{G(z_1),G^{*}(z_1), G(z_2),G^{*}(z_2)\}$, and fix $P \in P^{(n,t)}(\cal A)$ for some $n \in \bb N$, $t \in \bb R$. Let $(a_{i_1\ldots i_n})_{1\le i_1,\ldots,i_n\le N}$ be a family of complex numbers that is uniformly bounded in $i_1,\ldots,i_n$. Set $\alpha \deq -\log_N \eta$. Then
\begin{equation} \label{3.14}
\sum_{i_1,\ldots, i_n}a_{i_1\ldots i_n}P_{i_1\ldots i_n} \prec N^{t+n+b_0(P)}\,,
\end{equation}
where $b_0(P) \deq \alpha(\nu_1(P)+\nu_2(P))-(1-\alpha)\nu_4(P)-(1-\alpha)\nu_5(P)/4$. Moreover, for $\nu_1(P)=\nu_2(P)=0$, we have
\begin{equation} \label{nodomination}
\sum_{i_1,\ldots, i_n}a_{i_1\ldots i_n}P_{i_1\ldots i_n}=\OO(N^{t+n})\,.
\end{equation}

\end{lemma}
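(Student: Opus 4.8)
The proof of Lemma \ref{lem:4.2} is essentially a bookkeeping exercise built on top of Lemma \ref{prop4.4}, so the plan is to reduce the bound \eqref{3.14} to a product of the individual estimates in Lemma \ref{prop4.4}, tracking how each structural feature of the monomial $P$ contributes to the exponent. First I would reduce to the case where the coefficient family $(a_{i_1\ldots i_n})$ is absorbed: since it is uniformly bounded, it costs at most a factor $O(1)$ and can be ignored for the purpose of proving a $\prec$ bound. Next, expand $P_{i_1\ldots i_n}$ as the product $N^t \prod (\text{factors})$, where each factor is of one of three types: a deterministic factor $\bb E X$ with $X \in \cal T(\cal A)$, which by Lemma \ref{prop4.4} (and Jensen) contributes $O(\eta^{-(m-1)}) = O(N^{\alpha(m-1)})$ and hence a contribution $\alpha \nu_1$ in total; a centred trace factor $\langle \ul{A^m}\rangle$, which by Lemma \ref{prop4.4} is $\prec N^{(m-1)\alpha - (1-\alpha)}$, accounting for the $\alpha \nu_1$-type growth (now through $\nu_2$, since $\langle \ul{A^m}\rangle$ is counted in $\nu_2$) together with the gain $-(1-\alpha)$ per such factor, i.e. $-(1-\alpha)\nu_4$; and an entry factor $(A^m)_{xy}$, which by \eqref{410} is $\prec N^{(m-1)\alpha}$ if $x=y$ and $\prec N^{(m-1)\alpha - (1-\alpha)/2}$ if $x \neq y$.

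The main point — and the place where a little care is needed — is the summation over the indices $i_1, \dots, i_n$. A naive bound would give a factor $N$ per summed index, for a total of $N^n$, which is exactly the content of \eqref{nodomination}; the improvement in \eqref{3.14} comes from the off-diagonal decay of the Green function entries. The plan is to classify each index $i_k$ according to how it appears in $P$: if $i_k$ appears an odd number of times, then (the entries being a product that, viewed as a matrix-index contraction, cannot close up on $i_k$) there must be at least one off-diagonal entry factor carrying $i_k$, and more precisely the number of off-diagonal slots at $i_k$ is odd, hence at least one; summing such an index against factors each contributing $N^{-(1-\alpha)/2}$ in the off-diagonal case gives, after the Cauchy–Schwarz / power-counting bookkeeping used in the proof of Lemma 4.4 of \cite{HK}, a net saving of $(1-\alpha)/4$ per such index — this is the $-(1-\alpha)\nu_5/4$ term. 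Indices counted by $\nu_6$ (appearing an even number of times but in at least one genuinely off-diagonal slot) do not contribute to the exponent in \eqref{3.14} but would enter a finer analysis; here they are simply bounded by the trivial $N$ per index. I would make this rigorous by repeatedly applying the elementary inequality $\sum_k |M_{ak} N_{kb}| \le (\sum_k |M_{ak}|^2)^{1/2}(\sum_k|N_{kb}|^2)^{1/2}$ together with the Ward-type identity $\sum_k |G_{ak}|^2 = (\im G_{aa})/\eta$, which converts pairs of off-diagonal entry factors into diagonal ones at the cost of a controlled power of $\eta^{-1}$; iterating this peels off the summed indices one at a time while respecting the exponents dictated by $\nu_1, \dots, \nu_5$.

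The hardest bookkeeping step is verifying that the combinatorics of which entry factors are off-diagonal versus diagonal is correctly captured by the parity bookkeeping in $\nu_5$ (and that the $\nu_1+\nu_2$ power-of-$\eta$ count is additive across factors even after the Ward-identity reductions introduce new $\eta^{-1}$ factors). Concretely, one has to check that applying Cauchy–Schwarz to split an index $i_k$ appearing in two off-diagonal entry factors does not destroy the exponent budget: each application replaces two factors contributing $N^{(m_1-1)\alpha - (1-\alpha)/2}$ and $N^{(m_2-1)\alpha-(1-\alpha)/2}$ by (morally) $\im(G^{2m_1-1})_{aa}/\eta$ and similar, i.e. a factor $N^{(2m_1-2)\alpha + \alpha}$ and the index summation becomes a diagonal one — one verifies this stays within $\alpha(\nu_1+\nu_2)$ because $\nu_1+\nu_2$ counts exactly the total "excess power" $\sum (m-1)$ over all factors, which is conserved under the resolvent identity $G^{a}G^{b} = $ (difference quotient) and under these Ward reductions. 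Once the per-index and per-factor exponents are shown to add up to $t + n + \alpha(\nu_1+\nu_2) - (1-\alpha)\nu_4 - (1-\alpha)\nu_5/4$, the claim \eqref{3.14} follows; and specializing to $\nu_1 = \nu_2 = 0$ forces every $\ul{A^m}$ and $(A^m)_{xy}$ to have $m=1$, so there is no off-diagonal decay to exploit and no $\eta^{-1}$ to accumulate, giving the clean deterministic bound $O(N^{t+n})$ of \eqref{nodomination} (here one also uses that $\ul{G}$, $\langle\ul G\rangle$, and single Green function entries are all $O(1)$ up to $\prec$, and in the $\nu_1=\nu_2=0$ case the entry bounds $|G_{ij}| \le 1$ and $|\ul{G}| \le \eta^{-1} \wedge$ (trivial $O(1)$ bound) suffice without the probabilistic $\prec$).
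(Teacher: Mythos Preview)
Your exponent bookkeeping is correct, and the overall logic --- each factor contributes an explicit power of $N$ determined by Lemma~\ref{prop4.4}, and then one sums --- is exactly right. But the mechanism you propose for extracting the $-(1-\alpha)\nu_5/4$ gain is more elaborate than what is needed, and in places the details are off.

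The paper's argument is much shorter and avoids Ward identities and Cauchy--Schwarz entirely. One simply observes that $\nu_5$ is even (the total number of index slots is twice the number of entry factors), relabels so that $i_1,\dots,i_{\nu_5}$ are the odd-parity indices, and notes that one can choose $\nu_5/2$ of these indices lying in $\nu_5/2$ \emph{distinct} formally off-diagonal factors $(A^m)_{xy}$, $x\not\equiv y$. Then one splits the summation by inclusion--exclusion according to which of the chosen indices $i_1,\dots,i_{\nu_5/2}$ coincide. On the piece where they are pairwise distinct, each of the $\nu_5/2$ selected factors is genuinely off-diagonal and the second line of \eqref{410} applies, giving an extra $N^{-(1-\alpha)/2}$ per factor and hence $N^{-(1-\alpha)\nu_5/4}$ in total; the remaining pieces have fewer free summation indices and are no larger. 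No $\ell^2$ summation or Ward identity enters: one just multiplies pointwise bounds from Lemma~\ref{prop4.4} and then sums trivially, paying $N^n$. Your Cauchy--Schwarz route would reproduce the same exponent (indeed $\sum_k |G_{ak}| \le N^{1/2}(\im G_{aa}/\eta)^{1/2}$ and the entrywise bound $|G_{ak}|\prec N^{-(1-\alpha)/2}$ give identical powers), but it is redundant here and would be awkward to push through for higher powers $(A^m)_{xy}$, where the analogue of the Ward identity involves $(G^m G^{*m})_{aa}$ rather than $\im G_{aa}/\eta$.

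For \eqref{nodomination} your argument has a small slip: the deterministic bound $|G_{ij}|\le 1$ is false (one only has $|G_{ij}|\le \eta^{-1}$). The paper instead invokes the entrywise local law \eqref{afterall} together with $|m(z)|<1$, giving $|G_{ij}|\prec 1$; since every factor inside each expectation is $\prec 1$ when $\nu_1=\nu_2=0$, and one has the crude deterministic bound $\eta^{-1}$ per entry to control tails, each expectation is $O(1)$ and the claim follows.
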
  
\begin{proof}
Let us first prove \eqref{3.14}. After renaming the indices $i_1, \dots, i_n$, we may assume that for each $k=1,2,...,\nu_5$ the total number of appearances of $i_k$ in all factors $(A^m)_{ x  y}$ of $P$ is odd, where $A \in \cal A$. It is easy to see that $\nu_5$ is even. We further assume that $i_1,\cdots,i_{\nu_5/2}$ appear in different factors $(A^m)_{xy}$, $x\not\equiv y$ in $P$. Then by the inclusion-exclusion principle, we split the summation in \eqref{3.14} into the ones that for any $a, b \in \{1,...,{\nu_5/2}\}$, $i_a$ and $i_b$ are either distinct or identical. For example, if $P=A_{jj}A_{kk}A_{ll}A_{mm}A_{jk}A_{lm}$ we see that $\nu_5(P)=4$ and $\nu_5(P)/2=2$. We set $i_1=j$ and $i_2=l$, so that the summation can be rearranged as
\[
\sum A_{jj}A_{kk}A_{ll}A_{mm}A_{jk}A_{lm}=\sum_{k,m}A_{kk}A_{mm}\Big(\sum_{j\ne l} A_{jj}A_{jk}A_{ll}A_{lm}+\sum_{j}A^2_{jj}A_{jk}A_{jm} \Big)\,. 
\]
The desired result then follows immediately from Lemma \ref{prop4.4}.

The proof of \eqref{nodomination} follows from \eqref{afterall} and the fact $|m(z)|<1$ for all $z$.
\end{proof}

We end this section by defining a set of functions that will be used in the proof. For $z_1, z_2 \in \C \setminus [-2,2]$ define
\begin{equation} \label{488}
f_1(z_1,z_2)\deq-\frac{2}{\sqrt{z_1^2-4}}+\frac{2}{\sqrt{z_2-4}}\,,
\quad 
f_{2\pm}(z_1,z_2)\deq\frac{4+z_1z_2\pm\sqrt{z_1^2-4}\sqrt{z_2^{2}-4}}{\sqrt{z_1^2-4}\sqrt{z_2^{2}-4}(\sqrt{z_1^2-4}\mp\sqrt{z_2^{2}-4})^2}\,,
\end{equation} 
\begin{equation} \label{f34}
f_3(z_1,z_2)\deq\frac{2m(z_1)^2m(z_2)^2}{\sqrt{z_1^2-4}\sqrt{z_2^{2}-4}}\,,
\quad
f_4(z_1,z_2)\deq-\frac{m(z_1)m(z_2)(m(z_2)+m(z_1))}{\sqrt{z_1^2-4}\sqrt{z_2^{2}-4}}\,.
\end{equation}
In each argument $z_1$ and $z_2$, all of these functions have a branch cut on the line $[-2,2]$, and they can be extended to $\R \setminus \{-2,2\}$ either from above or from below. We use the notation $f(x ^{\pm}) \deq \lim_{\epsilon \downarrow 0} f(x \pm \ii \epsilon)$ for these extensions. For $E \in (-2,2)$ we define
\begin{equation}\label{491}
V(E) \deq \frac{2E(E^2-2)}{\kappa}+\frac{2\im m(E^+)^4}{\kappa^2}\,,
\end{equation}
where $\kappa\equiv \kappa_E \deq\sqrt{4-E^2}$. Note that all the functions defined above are bounded for bounded arguments separated away from $\pm 2$.

Furthermore, for $x_1,x_2 \in (-2,2)$, let 
\begin{multline} \label{g_1}
g_1(x_1,x_2) \deq 2\re \pb{ f_{2+}(x_1^{+},x_2^{-})- f_{2 - }(x_1^{+},x_2^{+})}\\=-\frac{4\big(4+x_1x_2+\sqrt{(4-x_1^2)(4-x_2^2)}\big)}{\sqrt{(4-x_1^2)(4-x_2^2)}+\big(\sqrt{4-x_1^2}+\sqrt{4-x_2^2}\big)}=\frac{4}{(x_1-x_2)^2}\bigg[\frac{4-x_1x_2}{\sqrt{4-x_1^2}\sqrt{4-x_2^2}}-1\bigg]\,,
\end{multline}
\begin{equation} \label{g_2}			
g_2(x_1,x_2)\deq 2\re \pb{ f_3(x_1^{+},x_2^{-})-f_3(x_1^{+},x_2^{+})}=\frac{2(x_1^2-2)(x_2^2-2)}{\sqrt{(4-x_1^2)(4-x_2^2)}\,}\,,
\end{equation}
and
\begin{equation} \label{g_3}
g_3(x_1,x_2)\deq 2 \re \pb{f_4(x_1^+,x_2^-)-f_4(x_1^+,x_2^+)}=\frac{x_1^2x_2+x_1x_2^2-2x_1-2x_2}{\sqrt{(4-x_1^2)(4-x_2^2)}\,}\,.
\end{equation}
For $i=1,\dots,3$ we define
\begin{equation} \label{FFF}
F_i(u,v)=g_i\bigg(E+\frac{u}{N\varrho_E},E+\frac{v}{N\varrho_E}\bigg).
\end{equation}

\section{Correlation of Green functions: proof of Theorem \ref{thm_resolvent}} \label{sec4}
In this section we prove Theorem \ref{thm_resolvent}. For the rest of the paper we set $\alpha\deq -\log_N \eta$ and $\gamma \deq -\log_N \omega$, so that $\eta=N^{-\alpha}$, $\omega=N^{-\gamma}$ and $0 \le \gamma \le \alpha \le 1-\tau$; we also abbreviate $G\deq G(z_1)$ and $F\deq G(z_2)$. An important ingredient for the proof is to use Lemmas \ref{lem:cumulant_expansion}  and \ref{lem:5.1} on the left-hand side of \eqref{3.14} and gradually improve the estimate we have in Lemma \ref{lem:4.2}. The finalized estimate is summarized in the following proposition, whose proof is postponed to Section \ref{sec5}.
\begin{proposition}  \label{prop4.3}
Fix $n \in \N$.
Let $(a_{i_1\ldots i_n})_{i_1,\ldots,i_n}$ be a family of uniformly bounded complex numbers. 
\begin{enumerate}
	\item Suppose $\cal A=\{G\},\{F\},\{G^*\}$, or $\{F^*\}$, and fix $P \in \cal P^{(n,t)}(\cal A)$ for some $t \in \bb R$. We have
	\begin{equation} \label{4.39}
	\sum_{i_1,\ldots,i_n}a_{i_1\ldots i_n}P_{i_1\ldots i_n} = \OO(N^{t+n+b(P)})\,,
	\end{equation}
	where $b(P) \deq -\nu_4(P)-(\nu_5(P)+\nu_6(P))/2$.
	\item Suppose $\cal A=\{G, F^*\}$ or $\{G^*, F\}$, and fix $P \in \cal P^{(n,t)}(\cal A)$ for some $t \in \bb R$. We have
	\begin{equation*} 
	\sum_{i_1,\ldots,i_n}a_{i_1\ldots i_n}P_{i_1\ldots i_n} = \OO(N^{t+n+b_*(P)})\,,
	\end{equation*}
	where $b_*(P) \deq \gamma v_2(P)-(1-\gamma)\nu_4(P)-\nu_5(P)/2-(1-\gamma)\nu_6(P)/2$.
\end{enumerate}
\end{proposition}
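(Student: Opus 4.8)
The plan is to combine the rough a priori bound of Lemma \ref{lem:4.2} with recursive cumulant expansions, so as to trade each factor of $N^\alpha$ (coming from the $\nu_1+\nu_2$ exponent in $b_0$) against a gain of $N^{-(1-\alpha)}$ (or $N^{-(1-\beta)}$ in part (ii)). Concretely, starting from a monomial $P$ with $\nu_1(P)+\nu_2(P) > 0$, one locates a factor of the form $\ul{A^m}$, $\langle\ul{A^m}\rangle$, or $(A^m)_{xy}$ with $m \ge 2$ (or more precisely a factor $(A^{m})_{xy}$ or $\langle\ul{A^m}\rangle$ contributing to $\nu_2$, respectively $\ul{A^m}$ contributing to $\nu_1$), writes one of its resolvent factors via $A = (H - z)^{-1}$ so that $z A = HA - I$, and extracts the entry $H_{ij}$ using Lemma \ref{lem:cumulant_expansion} in the real case or Lemma \ref{lem:5.1} in the complex case. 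Each such expansion replaces $P$ by a sum of monomials of the same type but with strictly smaller $\nu_1+\nu_2$ (or producing explicitly summable lower-order structure), plus higher-cumulant remainders controlled by Lemma \ref{lem}. The point of the bookkeeping via $\nu_1,\dots,\nu_6$ is precisely that the Ward identity $\sum_j |G_{ij}|^2 = \eta^{-1}\im G_{ii}$ converts the off-diagonal entries tracked by $\nu_5,\nu_6$ into the stated powers, and that the fluctuation factors $\langle\ul{A^m}\rangle$ tracked by $\nu_4$ each carry an honest $N^{-(1-\alpha)}$ once the local law (Proposition \ref{refthm1}) and Proposition \ref{lem4.4} (the bound $\bb E\ul{G^m} = O(1)$) are available. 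Thus after finitely many expansion steps one arrives at monomials with $\nu_1 = \nu_2 = 0$, to which \eqref{nodomination} of Lemma \ref{lem:4.2} applies directly and yields the exponent $t + n$ with no $\alpha$-dependence; reinstating the $\nu_4,\nu_5,\nu_6$ losses accumulated along the way produces exactly $b(P)$ in part (i) and $b_*(P)$ in part (ii).

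For part (i), where $\cal A = \{G\}$ (or one of $\{F\},\{G^*\},\{F^*\}$), there is only a single spectral parameter, so no $\omega$-scale enters and the gain per removed power is the full $N^{-(1-\alpha)}$; combined with the fact that $\im m(z) \asymp 1$ in the bulk (so $\im m /(N\eta) \asymp N^{-(1-\alpha)}$ in \eqref{afterall}), this is what makes the final exponent independent of $\alpha$. The key input is that one can always push $\nu_1 + \nu_2$ down to zero without ever increasing $\nu_4 + (\nu_5+\nu_6)/2$ beyond what is bookkept: the self-consistent equation obtained from the first cumulant expansion has the schematic form $\bb E\ul{G^{m}}$-type terms $= \frac{1}{-z - 2\bb E\ul G}(\cdots)$, and every term on the right has been arranged (as in \eqref{P3.4}, \eqref{P3.5}) to be either re-expandable or already of the claimed size. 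The termination of this procedure is exactly the finite-depth-tree argument sketched in Section \ref{sec:sketch}, and invoking Proposition \ref{lem4.4} at the leaves closes the induction.

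For part (ii), with $\cal A = \{G, F^*\}$ or $\{G^*, F\}$, the new feature is that $\nu_1$ must be absent from $b_*$ (indeed only $\nu_2$ appears, with coefficient $\beta$ rather than $\alpha$): factors $\ul{A^m}$ without a $\langle\cdot\rangle$ that mix the two spectral parameters get resolved by the resolvent identity $G F^* = (G - F^*)/(z_1 - z_2^*)$, each application costing a factor $|z_1 - z_2^*|^{-1} \asymp \omega^{-1} = N^{\beta}$ but removing the mixing, which is how $\beta\nu_2$ rather than $\alpha(\nu_1+\nu_2)$ emerges; meanwhile a genuinely two-parameter fluctuation $\langle\ul{A^m}\rangle$ still costs only $N^{-(1-\beta)}$ because the relevant covariance lives on the $\omega$-scale, not the $\eta$-scale — this is the quantitative manifestation of "$\langle\ul G\rangle$ and $\langle\ul{F^*}\rangle$ are weakly correlated". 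The main obstacle, as in Section \ref{sec:sketch}, is bounding the accumulated remainder terms $\cal W$ from the higher cumulants in Lemmas \ref{lem:cumulant_expansion}–\ref{lem:5.1}: each has extra smallness $N^{-k/2}$ from Lemma \ref{lem} but also extra factors of $G$-entries, so one must check that the net effect never spoils the target exponent, and that the recursion on such remainder monomials also terminates. I expect this remainder control — together with verifying that the tree of expansions has uniformly bounded depth in both parts — to be the technical heart of the proof; the algebraic identities and the Ward-identity estimates are routine once the bookkeeping maps $\nu_1,\dots,\nu_6$ are in place.
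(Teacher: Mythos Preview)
Your approach is the paper's: iterated cumulant expansions organized as a tree, with leaves controlled by \eqref{nodomination} and Proposition~\ref{lem4.4}. The one claim that would not survive is that each expansion step yields monomials with \emph{strictly smaller} $\nu_1+\nu_2$. Expanding $\bb E\,\ul{G^m}$ via $z_1G=HG-I$ and Lemma~\ref{lem:cumulant_expansion} produces, among the second-cumulant terms, $\frac{m}{N}\,\bb E\,\ul{G^{m+1}}$ and $\sum_{a=2}^{m-1}\bb E\,\ul{G^a}\,\bb E\,\ul{G^{m+1-a}}$ (see \eqref{519}), neither of which has smaller $\nu_1+\nu_2$. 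What is non-increasing is $t+n+\nu_1+\nu_2$ (relation \eqref{4.10}), but that alone does not terminate the tree.

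The paper resolves this with a two-stage argument that your sketch collapses into one. Stage one proves Proposition~\ref{lem4.4} ($\bb E\,\ul{G^m}=O(1)$) in isolation, using the auxiliary counter $\nu_3$: each step either lowers $t+n+b_0$ by a fixed $\chi>0$, or keeps it constant while incrementing $\nu_3$ (Lemma~\ref{lem5.3}, alternatives \eqref{reduce}--\eqref{5.7}); since $\nu_3\le\nu_1+\nu_2$ is uniformly bounded along every branch, the second alternative can occur only boundedly often in a row, and the tree is finite. With Proposition~\ref{lem4.4} in hand one gets the intermediate bound $b_1(P)=\alpha\nu_2-(1-\alpha)\nu_4-(1-\alpha)\nu_5/4$ of Lemma~\ref{lem1} (where $\nu_1$ has dropped out), after which a second tree---now with $b_1$ strictly decreasing by $\chi$ at every step while the target $b$ (resp.\ $b_*$) is non-increasing (Lemmas~\ref{lem4.6666} and~\ref{fly the ocean in a silver plane})---upgrades $b_1$ to the claim. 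Your parenthetical ``or producing explicitly summable lower-order structure'' is where this mechanism has to live; it is the $\nu_3$ counter, not a monotone decrease of $\nu_1+\nu_2$, that makes the first stage close.
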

\begin{remark}
We make a comparison of our result Proposition \ref{prop4.3} and the rough bound from Lemma \ref{lem:4.2}. By our assumptions $\nu_1,...,\nu_6\ge 0$ and $0\leq\gamma\leq\alpha<1$, we have $b_0(P) \ge b(P)$ and $b_0(P) \ge b_*(P)$, thus Proposition \ref{prop4.3} improves Lemma \ref{lem:4.2}, and it also gives the sharp bound in many situations. To illustrate, we give two examples for real symmetric $H$.

Let us take $\cal A=\{G, F^*\}$ and $P=\bb E \langle \ul{G} \rangle 	\langle \ul{F^*} \rangle \in \cal P^{(0,0)}(\cal A)$. We see $b_*(P)=2\gamma-2$, and by Proposition \ref{prop4.3}(ii) we have $P=\OO(N^{t+n+b(P)})=\OO(N^{2\gamma-2})$. We also see that $b_0(P)=2\alpha-2$, and by Lemma \ref{lem:4.2} we have $P=\OO(N^{t+n+b_0(P)})=\OO(N^{2\alpha-2}) \geq \OO(N^{2\gamma-2})$. From Theorem \ref{thm_resolvent}(i) we see that
$$
P=-2N^{2\gamma-2}+\OO(N^{2\gamma-2-c})
$$
for some $c=c(\gamma)\geq0$, thus the bound from Proposition \ref{prop4.3} is not only better than that from Lemma \ref{lem:4.2} but also sharp. 

In the second example we take $\cal A=\{G\}$ and $P=\bb E \ul{G^2} \in \cal P^{(0,0)}(\cal A)$. In this case $b(P)=1$ and Proposition \ref{prop4.3}(i) gives $P=\OO(1)$. Also, $b_0(P)=\alpha$ and Lemma \ref{lem:4.2} gives $P=\OO(N^{\alpha}) \ge \OO(1)$. By Lemma \ref{lem4.3}(i) below we see that the bound $\OO(1)$ is sharp.
\end{remark}

The next Lemma is designed to estimate the higher cumulant terms by the lower ones in a cumulant expansion.
\begin{lemma} \label{useful}
Recall that $H$ is defined as in Definition \ref{def:wigner}. Let $\cal A \in \{G,G^*,F,F^*\}$, and $P \in \cal P^{(n,t)}(\cal A)$ for some $n \ge 2$ and $t \in \bb R$. Let $X \in  \cal M^{(l)}(\cal A)$ for some $l\in \{2,3,...,n\}$, such that $\bb E X$ is a factor of $P$. Let $H_{ij}$ be an entry of $H$, $i,j \in \{i_1,\ldots,i_l\}$, and $k \ge 1$, we write
\begin{equation} \label{k}
\sum_{i_1,\ldots,i_n}a_{i_1\ldots i_n}\frac{P_{i_1\ldots i_n}}{\bb E X_{i_1\dots i_l}} \cdot \bb E \partial^k_{ij} X_{i_1\dots i_l}\,,
\end{equation}
as a finite sum of terms in the form
\begin{equation*}
\sum_{i_1,\ldots,i_n}a'_{i_1\ldots i_n}P'_{i_1\ldots i_n}.
\end{equation*}
Then each $P'\in\cal P^{(n,t')}(\cal A)$ in the sum satisfies
\begin{equation} \label{5.2}
\nu_1(P')=\nu_1(P)\,, \ \ \nu_3(P')=\nu_3(P)\,,\ \ \nu_5(P') \ge \nu_5(P)-d(P')\,,\ \ \nu_6(P') \ge \nu_6(P)-(2-d(P'))
\end{equation}
for some $d(P') \in \{0,2\}$, and
\begin{equation} \label{5.3}
t'+\lambda\nu_2(P')-(1-\lambda)\nu_4(P')=t+\lambda\nu_2(P)-(1-\lambda)\nu_4(P)
\end{equation}
for any $\lambda \in \bb R$.
Moreover, if $i, j$ both appear even times in $P$, then each $P' \in \cal P^{(n,t')}(\cal A)$ satisfies
\begin{equation} \label{55}
\nu_5(P') \ge \nu_5(P)\,,\ \ \nu_6(P') \ge \nu_6(P)-2\,.
\end{equation}
\end{lemma}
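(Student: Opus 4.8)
The plan is to unfold the derivative $\partial^k_{ij} X_{i_1\dots i_l}$ explicitly, track which new index-incidences and matrix-power structures are created, and then verify the four claimed properties term by term. First I would recall that for a Green function $A\in\{G,G^*,F,F^*\}$ the single derivative $\partial_{ij}$ (i.e. $\partial/\partial H_{ij}$) acts on entries of powers of $A$ via the standard rule $\partial_{ij}(A^m)_{xy}=-\sum_{a+b=m-1}(A^{a+1})_{xi}(A^{b+1})_{jy}$ (plus the symmetric $i\leftrightarrow j$ term in the real case), and on $\ul{A^m}$ via $\partial_{ij}\ul{A^m}=-\frac{m}{N}(A^{m+1})_{ji}$; the bracketed quantities $\langle\ul{A^m}\rangle$ behave the same up to subtracting the expectation. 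Iterating $k$ times, $\partial^k_{ij}X/\mathbb E(\cdots)$ becomes a finite sum: each factor $(A^m)_{xy}$ or $\langle\ul{A^m}\rangle$ that is ``hit'' gets replaced by a product of factors $(A^{m'})_{x'y'}$ whose new indices $x',y'$ are among $\{i,j\}$, while the total of $(m'-1)$ over the produced factors, plus the number of produced off-diagonal factors, is controlled. Crucially, no new $\ul{A^m}$ with $m\ge2$ is ever created (every derivative of a trace produces an \emph{entry} $(A^{m+1})_{ji}$, not a trace) and no new $\langle\ul{A^m}\rangle$ is created either, which is exactly what $\nu_1(P')=\nu_1(P)$ and $\nu_3(P')=\nu_3(P)$ encode; likewise $\nu_2$ and $\nu_4$ are preserved in the precise combination \eqref{5.3} because each application of $\partial_{ij}$ conserves the quantity ``$\sum(m-1)$ over all relevant factors'' while trading a trace-bracket for the same $(m+1)$-shift — this is a bookkeeping identity independent of $\lambda$, so I would just check it for the three elementary derivative rules and then it propagates by induction on $k$.

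Next I would handle the index-incidence bounds \eqref{5.2} for $\nu_5,\nu_6$. The point is that $k$ applications of $\partial_{ij}$ insert the two indices $i,j$ into $P'$ a total of exactly... well, some number of times, but the \emph{parity change} of any index $i_r$ with $r\notin\{i,j\}$ is zero (those indices are only shuffled among existing factors, never created or destroyed), so $\nu_5$ can only change through the two special indices $i$ and $j$. Writing $d(P')\in\{0,2\}$ for the number of the indices $i,j$ that end up appearing an odd number of times in $P'$, we get $\nu_5(P')\ge\nu_5(P)-d(P')$ immediately (we might lose at most the contributions of $i$ and $j$ if they were odd in $P$ and became even, or vice versa — but a careful sign-count shows the loss is at most $d(P')$). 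For $\nu_6$: the indices that appear an even number of times and sit in some off-diagonal entry — again only $i,j$ can switch status, and the two of them together can cost at most $2-d(P')$, giving $\nu_6(P')\ge\nu_6(P)-(2-d(P'))$. The complementary gain when $i,j$ already appear an even number of times in $P$: then differentiating cannot flip their global parity by an odd amount in a way that matters — more precisely, each $\partial_{ij}$ adds one copy of $i$ and one copy of $j$, so after $k$ steps both get $k$ extra copies; combined with their even starting multiplicities, their final parity is that of $k$, and one checks that in all resulting monomials the net effect on $(\nu_5,\nu_6)$ is $\nu_5(P')\ge\nu_5(P)$ and $\nu_6(P')\ge\nu_6(P)-2$, which is \eqref{55}.

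The cleanest way to organize all of this is by induction on $k$: establish the $k=1$ case by direct inspection of the three derivative formulas (this is where one sees that a trace turns into an entry, controlling $\nu_1,\nu_3$, and that exactly one new copy each of $i$ and $j$ is introduced per derivative, controlling $\nu_5,\nu_6$ and the parity statements), and then observe that the bounds are of the right ``telescoping'' form to be closed under one more derivative. I expect the main obstacle to be the $\nu_6$ bookkeeping: $\nu_6$ counts a somewhat delicate conjunction (index appears an \emph{even} number of times \emph{and} occurs in at least one genuinely off-diagonal entry), so when a derivative both changes the multiplicity of $i$ or $j$ \emph{and} creates/destroys off-diagonal factors containing them, one has to be careful that the worst case really is only a loss of $2-d(P')$ (resp.\ $2$ in the even-even case) and not more. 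Handling the real-symmetric extra term (the $i\leftrightarrow j$ symmetrization, and the variance normalization $\mathbb E|\sqrt N H_{ij}|^2=1+\delta_{ij}$ on the diagonal) requires separately checking the diagonal case $i=j$, but there the combinatorics only simplifies, so it should not cause trouble beyond a minor case split.
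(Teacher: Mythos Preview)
Your proposal is correct and follows essentially the same approach as the paper's proof: both reduce the claim to inspecting the elementary differentiation rules \eqref{3.15} and \eqref{4.9} on the two kinds of factors $(A^m)_{xy}$ and $\langle\ul{A^m}\rangle$, and then read off the effect on each $\nu_i$. The paper's write-up is much terser (it simply asserts that the relations follow from the differentiation rules and checks $t+\nu_2$ and $t-\nu_4$ are separately conserved, which immediately gives \eqref{5.3}); your version spells out the parity bookkeeping for $\nu_5,\nu_6$ and organizes things by induction on $k$, which is fine.

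One small imprecision: you write that ``no new $\langle\ul{A^m}\rangle$ is created either, which is exactly what $\nu_1(P')=\nu_1(P)$ and $\nu_3(P')=\nu_3(P)$ encode.'' That is not quite the reason. The quantities $\nu_1,\nu_3$ count \emph{unbracketed} traces $\ul{A^m}$, i.e.\ factors of the form $\mathbb E\,\ul{A^m}$ in $P$. The paper's (cleaner) observation is that since $X\in\cal M^{(l)}$ with $l\ge 2$, the factor $\mathbb E X$ being differentiated contains no unbracketed trace at all; hence $\nu_1,\nu_3$ are determined entirely by the untouched factors of $P$ and are trivially preserved. Your statement that differentiation never produces an unbracketed trace is also true and sufficient, but the reference to $\langle\ul{A^m}\rangle$ is irrelevant to $\nu_1,\nu_3$.
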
 
\begin{proof} 
By the definition of $\cal M^{(n)}(\cal A)$ and $l \ge 2$, we see that $\bb E X$ cannot be in the form $\bb E \ul{A^m}$, and we have $\nu_1(P)=\nu_1(P')$, $\nu_3(P)=\nu_3(P')$. For the same reason, we only need to consider whether it is $(A^m)_{x y}$ or $\langle \ul{A^m} \rangle$ that is differentiated by $H_{ij}$ in \eqref{k}, where $A \in \cal A$. By the differential rules \eqref{3.15} and \eqref{4.9} below, we see that in both the real symmetric and complex Hermitian cases the last two relations in \eqref{5.2} are satisfied. Similarly, we can also verify that
\begin{equation*}
t+\nu_2(P)=t'+\nu_2(P') \ \mbox{  and  }\ t-\nu_4(P)=t'-\nu_4(P')\,,
\end{equation*}
which implies \eqref{5.3}. Moreover, we see that \eqref{55} is also a simple consequence of \eqref{3.15} and \eqref{4.9} below.
\end{proof}
We present one more result, which carefully analyses the values of $\bb E\ul{G}$ and $\bb E \ul{G^2}$. It is a corollary of Proposition \ref{prop4.3} and Lemma \ref{useful}. We shall only give the proof of Lemma \ref{lem4.3}(i) at the end of Section \ref{sec4.2} below, and Lemma \ref{lem4.3}(ii) follows in a similar fashion.
\begin{lemma} \label{lem4.3}
Under the assumptions of Theorem \ref{thm_resolvent}, we have the following results.
\begin{enumerate}
	\item For real symmetric Wigner matrix $H$, we have
	\begin{equation*} 
	\bb E \ul{G}=m(z_1)-\frac{1}{N\sqrt{z_1^2-4}}\Big(-\frac{1}{2}+\frac{z_1}{2\sqrt{z_1^2-4}}+m(z_1)^4\sum_{i,j}\cal C_4(H_{ij})\Big)+\OO(N^{-3/2})\,,
	\end{equation*}
	and
	\begin{equation*}
	\bb E \ul{G^{2}}=-\frac{1}{2}+\frac{z_1}{2\sqrt{z_1^{2}-4}}+\OO(N^{-1})\,.
	\end{equation*}
	\item For complex Hermitian matrix $H$, we have
	\begin{equation*}
	\bb E \ul{G}=m(z_1)-\frac{m(z_1)^4}{N\sqrt{z_1^2-4}}\sum_{i,j}\cal C_{2,2}(H_{ij})+\OO(N^{-3/2})\,,
	\end{equation*}
	and
		\begin{equation*}
		\bb E \ul{G^{2}}=-\frac{1}{2}+\frac{z_1}{2\sqrt{z_1^{2}-4}}+\OO(N^{-1})\,.
		\end{equation*}
\end{enumerate}
\end{lemma}

In Sections \ref{sec4.1} -- \ref{sec:4.3} we prove Theorem \ref{thm_resolvent} for the real symmetric case ($\beta = 1$). We remark on the complex Hermitian case ($\beta = 2$) in Section \ref{sec:4.4}. Thus, throughout Sections \ref{sec4.1} -- \ref{sec:4.3} we assume that $\beta = 1$.

\subsection{Initial expansion and the leading term} \label{sec4.1} 
In this section we start with $\bb E \langle \ul{G} \rangle \langle \ul{F^*} \rangle$ and perform the initial cumulant expansion. This will reveal the leading term of our calculation.

Let $\cal A =\{G,F^*\}$, and we see that $\bb E \langle \ul{G} \rangle \langle \ul{F^*} \rangle \in \cal P^{(0,0)}(\cal A)$. Trivially we have $z_1G=GH-I$, thus
\begin{equation} \label{4.4}
z_1\bb E \langle \ul{G} \rangle \langle \ul{F^*} \rangle=\bb E \ul{GH}\langle \ul{F^*} \rangle=\frac{1}{N} \sum_{i,j}\bb E G_{ij}H_{ji}\langle \ul{F^*} \rangle\,.
\end{equation}
Since $H$ is symmetric, for any differentiable function $f=f(H)$ we define the directional derivative $\partial_{ij}f(H)\deq \frac{\mathrm{d}}{\mathrm{d}t}\Big{|}_{t=0} f\pb{H+t\,\Delta^{ij}},$
where $\Delta^{ij}$ denotes the matrix whose entries are zero everywhere except at the sites $(i,j)$ and $(j,i)$ where they are one: $\Delta^{ij}_{kl} =(\delta_{ik}\delta_{jl}+\delta_{jk}\delta_{il})(1+\delta_{ij})^{-1}$. 
We then compute the right-hand side of \eqref{4.4} using the formula \eqref{eq:cumulant_expansion} with $f=f_{ij}(H)\deq G_{ij}\langle \ul{ F^*} \rangle$, $h=H_{ji}$, and obtain
\begin{equation} \label{4.7}
z_1\bb E \langle \ul{G} \rangle \langle \ul{F^*} \rangle=\frac{1}{N^2}\sum_{i,j}(1+\delta_{ij}\bb E \partial_{ji} (G_{ij}\langle \ul{F^*} \rangle)+\sum_{k=2}^{l}W^{(1)}_{k}+ \frac{1}{N}\sum_{i,j}R^{(1,ji)}_{l+1}\,,
\end{equation}
where we used the notation
\begin{equation} \label{Wk}
W^{(1)}_k\deq \frac{1}{N} \sum_{i,j}\frac{1}{k!}\cal C_{k+1}(H_{ji}) \bb E \partial^k_{ji} (G_{ij}\langle\ul{F^*}\rangle)\,.
\end{equation}
Note that the sum in \eqref{4.7} begins from $k = 1$ because $\cal C_1(H_{ij}) = 0$. 

In \eqref{4.7}, $l$ is a fixed positive integer to be chosen later, and $R^{(1,ij)}_{l+1}$ is a remainder term defined analogously to $R_{l+1}$ in \eqref{eq:cumulant_expansion}. It is a routine verification (for details one can refer to Lemma 4.6(i) in \cite{HK}) to show that we can find a constant $l \in \bb N$ such that $(NT)^{-1}\sum_{i,j}R^{(1,ij)}_{l+1}=\OO(N^{-10})$. From now on, we shall always assume that the reminder term is negligible for large enough $l$. 

Note that for $G=(H-z_1)^{-1}$, we have
\begin{equation} \label{3.15}
\partial_{kl} G_{ij}=-(1+\delta_{kl})^{-1}(G_{ik}G_{lj}+G_{ il}G_{kj})\,,
\end{equation} 
which gives
\begin{multline*}
\frac{1}{N^2}\sum_{i,j} (1+\delta_{ij})\bb E \partial_{ji} (G_{ij}\langle\ul{F^*}\rangle)
=-N^{-2}\sum_{i,j}\bb E(G_{ii}G_{jj}+G_{ij}^2)\langle\ul{F^*}\rangle-N^{-3}\sum_{i,j}\bb EG_{ij}((F^{*2})_{ij}+(F^{*2})_{ji})\\
=-\bb E \ul{G}^2\langle \ul{F^*}\rangle -N^{-1}\bb E \ul{G^2} \langle \ul{F^*}\rangle-2N^{-2}\bb E \ul{GF^{*2}}
\\
=-2 \bb E \ul{G} \,\bb E \langle \ul{G} \rangle \langle \ul{F^*} \rangle -\bb E \langle \ul{G} \rangle^2 \langle \ul{F^*} \rangle-N^{-1}\bb E \langle\ul{G^2}\rangle \langle \ul{F^*}\rangle-2N^{-2}\bb E \ul{GF^{*2}}\,.
\end{multline*}
Together with \eqref{4.7} we have
\begin{equation} \label{4.11}
\bb E \langle \ul{G} \rangle \langle \ul{F^*} \rangle=\frac{1}{T}\bigg(\bb E \langle \ul{G} \rangle^2 \langle \ul{F^*} \rangle+\frac{1}{N}\bb E \langle\ul{G^2}\rangle \langle \ul{F^*}\rangle+\frac{2}{N^2}\bb E \ul{GF^{*2}}-\sum_{k=2}^l W^{(1)}_k+\OO(N^{-10})\bigg)\,,
\end{equation}
where $T\deq -z_1-2\bb E \ul{G}$. From \eqref{2.5} and \eqref{3.4} it is easy to see that 
\begin{equation*}  
\Big|\frac{1}{T}\Big|=\OO_{\tau}(1)\,,
\end{equation*}
and we recall $\tau>0$ is the constant defined in Theorem \ref{thm_resolvent}. By Lemma \ref{lem} and \eqref{3.15} we see that for $k \ge 2$, $T^{-1}W^{(1)}_k$ is a finite sum of the terms in the form $\sum_{i,j}a_{ij}P_{ij}$, where $P \in \cal P^{(2,-(k+5)/2)}(\cal \{G,F^*\})$ or $P \in \cal P^{(2,-(k+3)/2)}(\cal \{G,F^*\})$, depending on whether $\langle \ul{F^*} \rangle$ is differentiated or not.

 Consider $W^{(1)}_4$, one can readily check that by \eqref{3.15} we have $t(P)+b_*(P) \le \gamma-5.5$ for each $P$ appearing as a term $\sum_{i,j}a_{ij}P_{ij}$ in $T^{-1}W^{(1)}_4$. Thus by Proposition \ref{prop4.3}(ii) we know
\begin{equation*}
T^{-1}W^{(1)}_4=\OO(N^{\gamma-3.5})\,.
\end{equation*}
For $k \ge 5$, let $\sum_{i,j} a'_{ij} P'_{ij}$ be a term in $T^{-1}W^{(1)}_k$. By \eqref{Wk} we have
\[
W^{(1)}_k=\frac{1}{N}\sum_{i,j}\frac{4!\,\cal C_{k+1}(H_{ij})}{k!\,\cal C_{5}(H_{ij})} \frac{1}{4!}\cal C_5(H_{ij}) \bb E \partial^{k-4}_{ji}(\partial^4_{ji} (G_{ij}\langle\ul{F^*}\rangle))\,,
\]
and hence there is a term  $\sum_{ij}a_{ij}P_{ij}$ in $T^{-1}W^{(1)}_4$ for some $P_{ij}$ such that $\sum_{i,j} a'_{ij} P'_{ij}$ appears in the sum
\begin{equation*}
\sum_{i,j}\frac{4!\,\cal C_{k+1}(H_{ij})}{k!\,\cal C_{5}(H_{ij})}\, a_{ij}\,\bb E \partial^{k-4}_{ij}P_{ij}\,.
\end{equation*}
Now we apply Lemma \ref{useful}. By \eqref{5.2} and choosing $\lambda=\gamma$ in \eqref{5.3} we know that $b_*(P')\le b_*(P)+1$. Also, by Lemma \ref{lem}(i) we have $t'=t-(k+1)/2+5/2\leq t-1/2$. Thus $t'+b_*(P')\le t-1/2+b_*(P)+1 \le \gamma-5$. By Proposition \ref{prop4.3}(ii) we know
\begin{equation*}
T^{-1}W^{(1)}_k=\OO(N^{\gamma-3})
\end{equation*}
for $k \ge 5$. Together with \eqref{4.11} we see that
\begin{equation} \label{417}
\bb E \langle \ul{G} \rangle \langle \ul{F^*} \rangle=\frac{1}{T}\bigg(\bb E \langle \ul{G} \rangle^2 \langle \ul{F^*} \rangle+\frac{1}{N}\bb E \langle\ul{G^2}\rangle \langle \ul{F^*}\rangle+\frac{2}{N^2}\bb E \ul{GF^{*2}}- W^{(1)}_2-W^{(1)}_3\bigg)+\OO(N^{\gamma-3})\,.
\end{equation}
\begin{remark}
In the rest of the paper, we will repeatedly use the above argument. More precisely, after performing a cumulant expansion, we estimate the $n$th cumulant term ($n=3$ or $5$, depending on how small we need the bound to be) by Proposition \ref{prop4.3}, and then compare the higher cumulant terms to the $n$th cumulant term using Lemma \ref{useful}. This allows us to have bounds for any high order cumulant term. In addition, we repeat the proof of Lemma 4.6(i) in \cite{HK} to estimate the remainder term. In this way we get a cumulant expansion with only the first few cumulant terms together with a ``good'' error term.

We will not repeat this strategy in detail for the rest of the paper, and the reader can check it also works for all other cumulant expansions appearing in this paper.
\end{remark}
We close this section with the following simple version of Theorem \ref{thm_resolvent} for $\beta = 1$, which reveals the leading order term.
\begin{lemma} \label{cov}
Under the assumptions of Theorem \ref{thm_resolvent}, for $\beta = 1$ we have
\begin{equation*}
\bb E \langle \ul{G} \rangle \langle \ul{F^*} \rangle=-\frac{2}{N^2(z_1-z_2^*)^2}+\OO(N^{\gamma-2}+N^{3\gamma-3})\,.
\end{equation*}
\end{lemma}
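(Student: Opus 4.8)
The starting point is the expansion \eqref{417}, which writes $\bb E\langle\ul G\rangle\langle\ul{F^*}\rangle$ as $T^{-1}$ times the sum of the five quantities $\bb E\langle\ul G\rangle^2\langle\ul{F^*}\rangle$, $N^{-1}\bb E\langle\ul{G^2}\rangle\langle\ul{F^*}\rangle$, $2N^{-2}\bb E\ul{GF^{*2}}$, $-W^{(1)}_2$, $-W^{(1)}_3$, plus an error $O(N^{\beta-3})$; recall $T=-z_1-2\bb E\ul G$ and $|T^{-1}|=O_\tau(1)$. The plan is to show that the first, second, fourth and fifth quantities contribute only to the error, while $2N^{-2}T^{-1}\bb E\ul{GF^{*2}}$ produces the leading order $-2/(N^2(z_1-z_2^*)^2)$.

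For the first two I would invoke Proposition \ref{prop4.3}(ii) with $\cal A=\{G,F^*\}$: for $\bb E\langle\ul G\rangle^2\langle\ul{F^*}\rangle\in\cal P^{(0,0)}(\cal A)$ one has $\nu_2=0$, $\nu_4=3$, $\nu_5=\nu_6=0$, so $b_*=3\beta-3$ and the term is $O(N^{3\beta-3})$; for $\bb E\langle\ul{G^2}\rangle\langle\ul{F^*}\rangle\in\cal P^{(0,0)}(\cal A)$ one has $\nu_2=1$, $\nu_4=2$, so $b_*=3\beta-2$ and $N^{-1}T^{-1}\bb E\langle\ul{G^2}\rangle\langle\ul{F^*}\rangle=O(N^{3\beta-3})$. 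For $W^{(1)}_2$ and $W^{(1)}_3$ I would follow the routine procedure of the Remark after \eqref{417}: each is a finite sum of terms $\sum_{i_1,i_2}a_{i_1i_2}P_{i_1i_2}$ with $P\in\cal P^{(2,t)}(\cal A)$, where $t\in\{-5/2,-7/2\}$ for $W^{(1)}_2$ and $t\in\{-3,-4\}$ for $W^{(1)}_3$ (an extra $N^{-1}$ appears each time \eqref{3.15} differentiates $\ul{F^*}$); bounding every resulting monomial by Proposition \ref{prop4.3}(ii) yields $T^{-1}W^{(1)}_2=O(N^{\beta-5/2})$ and $T^{-1}W^{(1)}_3=O(N^{\beta-2})$, the latter saturated by the all-diagonal monomial $\bb E(G_{i_1i_1})^2(G_{i_2i_2})^2\langle\ul{F^*}\rangle$ (for which $t+n+b_*=-3+2+(\beta-1)$). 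All four contributions are thus $O(N^{\beta-2})+O(N^{3\beta-3})$.

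For the leading term I would compute $\bb E\ul{GF^{*2}}$ exactly. Since $F^*=G(z_2^*)$ with $z_1\neq z_2^*$, two applications of the resolvent identity give $GF^{*2}=(G-F^*)(z_1-z_2^*)^{-2}-F^{*2}(z_1-z_2^*)^{-1}$, hence $\bb E\ul{GF^{*2}}=(\bb E\ul G-\bb E\ul{F^*})(z_1-z_2^*)^{-2}-\bb E\ul{F^{*2}}(z_1-z_2^*)^{-1}$. By Lemma \ref{lem4.3}(i) (and complex conjugation) $\bb E\ul G=m(z_1)+O(N^{-1})$, $\bb E\ul{F^*}=m(z_2^*)+O(N^{-1})$, $\bb E\ul{F^{*2}}=O(1)$, and $T=-z_1-2m(z_1)+O(N^{-1})=-\sqrt{z_1^2-4}+O(N^{-1})$, using $z+2m(z)=\sqrt{z^2-4}$ (from $m^2+zm+1=0$); moreover $|z_1-z_2^*|=|{-\omega}+2\mathrm{i}\eta|\asymp\omega$ since $\eta\le\omega$. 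Multiplying $2N^{-2}\bb E\ul{GF^{*2}}$ by $T^{-1}$, the $\bb E\ul{F^{*2}}$-term contributes only $O(1/(N^2\omega))=O(N^{\beta-2})$, the $O(N^{-1})$ remainders contribute $O(1/(N^3\omega^2))+O(1/(N^3\omega))=O(N^{3\beta-3})$, and the main part equals
\[
-\frac{2}{N^2(z_1-z_2^*)^2}\cdot\frac{m(z_1)-m(z_2^*)}{\sqrt{z_1^2-4}}+O(N^{\beta-2})+O(N^{3\beta-3})\,.
\]

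It therefore remains to show $\dfrac{m(z_1)-m(z_2^*)}{\sqrt{z_1^2-4}}=1+O(\omega)$, which is the one step requiring care: $m$ is \emph{not} analytic across $(-2,2)$, so one cannot Taylor-expand $m(z_1)$ around $z_2^*$. Instead I would argue algebraically from $m^2+zm+1=0$, i.e.\ $z_1+2m(z_1)=\sqrt{z_1^2-4}$ and $z_1+m(z_1)=-1/m(z_1)$, which after a short computation give $\frac{m(z_1)-m(z_2^*)}{\sqrt{z_1^2-4}}-1=\frac{1-m(z_1)m(z_2^*)}{m(z_1)\sqrt{z_1^2-4}}$. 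In the bulk $E_1,E_2\in[-2+\tau,2-\tau]$ one has $|m(z_1)|\asymp 1$ and $|\sqrt{z_1^2-4}|\asymp1$, and writing $m(z_2^*)=\overline{m(z_2)}$ with $z_2=z_1+\omega$ and splitting $1-m(z_1)m(z_2^*)=(1-|m(z_1)|^2)+m(z_1)\overline{(m(z_1)-m(z_2))}$, one uses $|m(z_1)|^2=1+O(\eta)$ together with $|m'(z)|=|m(z)|/|\sqrt{z^2-4}|=O_\tau(1)$ on the segment $[z_1,z_2]$ to get $1-m(z_1)m(z_2^*)=O(\eta)+O(\omega)=O(\omega)$. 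Substituting back gives the main part as $-2/(N^2(z_1-z_2^*)^2)+O(\omega/(N^2\omega^2))=-2/(N^2(z_1-z_2^*)^2)+O(N^{\beta-2})+O(N^{3\beta-3})$, and combining with the earlier bounds proves the lemma. I expect the main obstacle to be organizational rather than conceptual: verifying that $W^{(1)}_3$ (which sits exactly at the $O(N^{\beta-2})$ threshold) is no larger, i.e.\ pinning down its dominant monomial, and carrying the $m$-algebra above through cleanly without slipping into an invalid analytic continuation across the spectrum.
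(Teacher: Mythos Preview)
Your proof is correct and follows essentially the same route as the paper: bound the first, second, fourth, and fifth terms on the right of \eqref{417} via Proposition \ref{prop4.3}(ii), and extract the leading contribution from $2N^{-2}T^{-1}\bb E\,\ul{GF^{*2}}$ using the resolvent identity and Lemma \ref{lem4.3}. Your treatment is in fact slightly more careful than the paper's in two spots: you correctly pin $T^{-1}W^{(1)}_3$ at $O(N^{\beta-2})$ (the paper's text asserts $O(N^{3\beta-3})$ for all non-leading terms, which is harmless but imprecise for $\beta<1/2$), and you spell out the algebraic verification of $(m(z_1)-m(z_2^*))/\sqrt{z_1^2-4}=1+O(\omega)$ that the paper leaves implicit.
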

\begin{proof}
The proof is a simple computation using \eqref{417},  Lemmas \ref{resolvent} and \ref{lem4.3} and Proposition \ref{prop4.3}. We omit the details.
\end{proof}
In the next few sections we shall analyse the right-hand side of \eqref{417} carefully.
\subsection{Second cumulant} \label{sec:4.2} In this section we analyse the second-cumulant terms on the right-hand side of \eqref{417}.

\begin{lemma} \label{lem4.2}
Let $\cal E_1\equiv \cal E_1(\omega)$ be as in \eqref{cal E_1}.	 For the first term on the right-hand side of \eqref{417}, we have 
\begin{equation} \label{GF_est}
\frac{1}{T}\bb E \langle \ul{G} \rangle^2 \langle \ul{F^*} \rangle=\cal E_1\,.
\end{equation}
\end{lemma}
\begin{proof}
By Proposition \ref{prop4.3}(ii) and $|T^{-1}|=\OO(1)$ we have $T^{-1}\bb E \langle \ul{G} \rangle^2 \langle \ul{F^*} \rangle=\OO(N^{3\gamma-3})$. In order to get the desired bound, we perform a cumulant expansion whenever our estimate is not precise enough. We divide the proof into four steps.

\emph{Step 1.} In this step we perform the cumulant expansion for $\bb E \langle \ul{G} \rangle^2 \langle \ul{F^*} \rangle$. By the identity $z_2^*F^*=F^*H-I$ we have
\begin{equation} \label{5.21}
z_2^*\bb E \langle \ul{G} \rangle^2 \langle \ul{F^*} \rangle=\bb E \langle \langle \ul{G} \rangle^2\rangle  \ul{F^*H}=\frac{1}{N} \sum_{i,j}\bb E\langle \langle \ul{G} \rangle^2\rangle F^*_{ij}H_{ji}\,.
\end{equation}
Similar to \eqref{4.11}, by calculating the last sum in \eqref{5.21} using the formula \eqref{eq:cumulant_expansion} with $f=f_{ij}(H)\deq \langle \langle \ul{G} \rangle^2\rangle F^*_{ij}$ and $h=H_{ji}$ we have for some fixed $l \in \bb N$,
\begin{multline*} 
\bb E \langle \ul{G} \rangle^2 \langle \ul{F^*} \rangle
\\
=\frac{1}{T_*}\bigg(\bb E \langle\ul{G}\rangle^2\langle \ul{F^*}\rangle^2-\bb E \langle\ul{G}\rangle^2\bb E\langle \ul{F^*}\rangle^2+\frac{4}{N^2}\bb E\langle\ul{G}\rangle \langle \ul{G^2F^*}\rangle
+\frac{1}{N}\bb E \langle \ul{G}  \rangle^2\langle \ul{F^{*2}}\rangle-\sum_{k=2}^l W^{(2)}_k\bigg)+O(N^{-10})\,,
\end{multline*}
where we used the notations $T_* \deq -z_2^*-2 \bb E \ul{F^*}$,
\begin{equation*} 
W^{(2)}_k\deq \frac{1}{N} \sum_{i,j}\frac{1}{k!}\cal C_{k+1}(H_{ji}) \bb E \frac{\partial^k ((\langle\ul{G}\rangle^2 -\bb E \langle\ul{G}\rangle^2) (F^*)_{ij})}{\partial H^k_{ji}}\,.
\end{equation*}
From \eqref{2.5} and \eqref{3.4} we see that $|\frac{1}{T_*}|=\OO_{\tau}(1)$,
and we recall $\tau>0$ is the constant defined in Theorem \ref{thm_resolvent}. As in Section \ref{sec4.1}, we use Proposition \ref{prop4.3} and Lemma \ref{useful} to deduce that
\begin{equation*}
	T_*^{-1}W_k^{(2)}=\OO(N^{2\gamma-3.5})
\end{equation*}
for $k=2$ and $k\ge 4$. Thus
\begin{multline} \label{4.233}
\bb E \langle \ul{G} \rangle^2 \langle \ul{F^*} \rangle=\frac{1}{T_*}\bigg(\bb E \langle\ul{G}\rangle^2\langle \ul{F^*}\rangle^2-\bb E \langle\ul{G}\rangle^2\bb E\langle \ul{F^*}\rangle^2+\frac{4}{N^2}\bb E\langle\ul{G}\rangle \langle \ul{G^2F^*}\rangle+\frac{1}{N}\bb E \langle \ul{G}  \rangle^2\langle \ul{F^{*2}}\rangle- W^{(2)}_3\bigg)+\cal E_1\,,
\end{multline}
where $\cal E_1=\cal E_1(\omega)$ was defined in \eqref{cal E_1}. By Proposition \ref{prop4.3}(i) we conclude that 
\begin{equation} \label{425}
\bb E \langle\ul{G}\rangle^2\bb E\langle \ul{F^*}\rangle^2=\OO(N^{-4})\,.
\end{equation}
For the rest terms on the right-hand side of \eqref{4.233}, the estimates from Proposition \ref{prop4.3} are not enough, and we need to perform further cumulant expansions on them. In particular, the first and third term on the right-hand side of \eqref{4.233} have true size $N^{4\gamma-4}$, and we will show in the next step that there is a cancellation between those two, thus their sum satisfies the desired bound. 

\emph{Step 2.} In this step we show there is a cancellation between the first and third term on the right-hand side of \eqref{4.233}. Let us consider the first term of \eqref{4.233}. Analogously to \eqref{4.11}, we see that for some fixed $l \in \bb N$,
\begin{multline} \label{4.24}
\bb E \langle \ul{G} \rangle^2 \langle \ul{F^*} \rangle^2=\frac{1}{T_*}\bigg(\bb E \langle\ul{G}\rangle^2\langle \ul{F^*}\rangle^3-\bb E \langle\ul{G}\rangle^2\langle \ul{F^*}\rangle\bb E\langle \ul{F^*}\rangle^2+\frac{1}{N}\bb E \langle \ul{G}  \rangle^2\langle \ul{F^*} \rangle \langle \ul{F^{*2}}\rangle+\frac{2}{N^2}\bb E \langle \ul{G} \rangle ^2\bb E\ul{F^{*3}}
\\
+\frac{2}{N^2}\bb E \langle \ul{G} \rangle ^2\langle\ul{F^{*3}}\rangle +\frac{4}{N^2}\bb E\langle\ul{G}\rangle \langle\ul{F^*} \rangle\langle \ul{G^2F^*}\rangle-\sum_{k=2}^l W^{(3)}_k+\frac{4}{N^2}\bb E\langle\ul{G}\rangle \langle\ul{F^*} \rangle\bb E\ul{G^2F^*}\bigg)+\OO(N^{-10})\,,
\end{multline}
where $W^{(3)}_k$ is defined analogously to $W^{(1)}_k$ in \eqref{4.11}. As in Section \ref{sec4.1}, we can use Proposition \ref{prop4.3} and Lemma \ref{useful} to conclude that on the right-hand side of \eqref{4.24}, all but the last term are bounded by $\cal E_1$. Thus
\begin{equation*}
\begin{aligned}
\bb E \langle \ul{G} \rangle^2 \langle \ul{F^*} \rangle^2
=\frac{4}{N^2T_*}\bb E\langle\ul{G}\rangle \langle\ul{F^*} \rangle\bb E\ul{G^2F^*}+\cal E_1
=-\frac{4}{N^2(z_1-z_2^*)^2}\bb E \langle\ul{G}\rangle \langle\ul{F^*} \rangle+\cal E_1\,,
\end{aligned}
\end{equation*}
where in the second step we used Lemmas \ref{resolvent} and \ref{lem4.3}(i). Summing the first and third terms on the right-hand side of \eqref{4.233}, we have
\begin{multline} \label{watsthestory}
\bb E \langle \ul{G} \rangle^2 \langle \ul{F^*} \rangle^2+\frac{4}{N^2}\bb E\langle\ul{G}\rangle \langle \ul{G^2F^*}\rangle\\
= \bb E \langle \ul{G} \rangle^2 \langle \ul{F^*} \rangle^2+\frac{4}{N^2(z_1-z_2^*)^2}\bb E \langle\ul{G}\rangle \langle\ul{F^*} \rangle-\frac{4}{N^2(z_1-z_2^*)^2}\bb E \langle\ul{G}\rangle^2+\frac{4}{N^2(z_1-z_2^*)}\bb E \langle\ul{G}\rangle \langle\ul{G^2} \rangle=\cal E_1\,,
\end{multline}
where in the second step the first two terms have a cancellation, and we used Proposition \ref{prop4.3} to estimate the last two terms. 

\emph{Step 3.} In this step we apply the cumulant expansion on the fourth term on the right-hand side of \eqref{4.233}. Once again, as in \eqref{4.11}, we see that for some fixed $l \in \bb N$,
\begin{equation*} 
\begin{aligned}
\bb E \langle \ul{G} \rangle^2 \langle \ul{F^{*2}} \rangle&=\frac{1}{T_*}\bigg(\bb E \langle\ul{G}\rangle^2\langle \ul{F^*}\rangle+2\bb E \langle\ul{G}\rangle^2\langle\ul{F^{*2}}\rangle \langle\ul{F^*}\rangle-2\bb E \langle\ul{G}\rangle^2\bb E\langle\ul{F^{*2}}\rangle\langle \ul{F^*}\rangle+\frac{2}{N}\bb E \langle \ul{G}  \rangle^2\langle \ul{F^{*3}}\rangle\\
&\msp+2\bb E \langle \ul{G} \rangle^2 \langle \ul{F^{*}} \rangle \bb E \ul{F^{*2}}+\frac{4}{N^2}\bb E \langle \ul{G} \rangle \langle\ul{G^2F^{*2}}\rangle -\sum_{k=2}^l W^{(4)}_k\bigg)+\OO(N^{-10})\,,
\end{aligned}
\end{equation*} 
where $W^{(4)}_k$ is defined analogously to $W^{(1)}_k$ in \eqref{4.7}. By estimating the above terms using Proposition \ref{prop4.3} and Lemma \ref{useful}, we have
\begin{equation} \label{qqq}
\frac{1}{N}\bb E \langle \ul{G} \rangle^2 \langle \ul{F^{*2}} \rangle=\cal E_1\,.
\end{equation}

\emph{Step 4.} Finally we deal with $W_3^{(2)}$ in \eqref{4.233}. By applying the differential \eqref{3.15} carefully, we see that the two dangerous terms in $W^{(2)}_3$ are
\begin{equation} \label{429}
\frac{1}{6N}\sum\limits_{i,j} \cal C_4(H_{ij}) (1+\delta_{ij})^{-3}\bb E \langle \ul{G} \rangle^2 F^{*2}_{ii}F^{*2}_{jj}\,,
\end{equation}
and
\begin{equation} \label{430}
\frac{1}{6N^2}\sum\limits_{i,j} \cal C_4(H_{ij}) (1+\delta_{ij})^{-3}\bb E \langle \ul{G} \rangle (G^2)_{ii}G_{jj}F^{*}_{ii}F^{*}_{jj}\,,
\end{equation}
and all other terms are bounded by $\OO(N^{3\gamma-4})$ by Proposition \ref{prop4.3}. Let us first consider \eqref{429}. Similar as in \eqref{4.11}, by writing $z_1\bb E \langle \ul{G} \rangle^2 F^{*2}_{ii}F^{*2}_{jj}=\frac{1}{N}\sum_{a,b}\bb E \langle\langle \ul{G}\rangle F^{*2}_{ii}F^{*2}_{jj}\rangle G_{ab}H_{ba}$ and applying formula \eqref{eq:cumulant_expansion} on the last averaging, we have for some fixed $l \in \bb N$,
\begin{multline*} 
\bb E \langle \ul{G} \rangle^2 F^{*2}_{ii}F^{*2}_{jj}=\frac{1}{T}\bigg(\bb E \langle \ul{G} \rangle^3 F^{*2}_{ii}F^{*2}_{jj}-\bb E \langle \ul{G} \rangle F^{*2}_{ii}F^{*2}_{jj} \bb E \langle \ul{G} \rangle^2+\frac{1}{N} \bb E \langle \ul{G} \rangle \langle \ul{G^2} \rangle F^{*2}_{ii}F^{*2}_{jj} +\frac{2}{N^2}\bb E  F^{*2}_{ii}F^{*2}_{jj} \bb E \ul{G^3}\\
\msp+\frac{2}{N^2}\bb E  F^{*2}_{ii}F^{*2}_{jj} \langle \ul{G^3} \rangle+\frac{4}{N^2}\bb E \langle \ul{G} \rangle (G^2F^*)_{ii}F^{*}_{ii}F^{*2}_{jj}+\frac{4}{N^2}\bb E \langle \ul{G} \rangle (G^2F^*)_{jj}F^{*2}_{ii}F^{*}_{jj}
\msp-\sum_{k=2}^l W^{(5)}_k\bigg)+\OO(N^{-10})\,,
\end{multline*}
where $W^{(5)}_k$ is defined analogously to $W^{(1)}_k$ in \eqref{4.11}. Note that $\cal C_4(H_{ij})=\OO(N^{-2})$. Now we plug the last equation into \eqref{429}. By estimating the result using Proposition \ref{prop4.3} and Lemma \ref{useful}, we see that $\eqref{429}=\OO(N^{3\gamma-4})$. Similarly, for \eqref{430} we have
\begin{multline} \label{433vayne}
\bb E \langle \ul{G} \rangle (G^2)_{ii}G_{jj}F^{*}_{ii}F^{*}_{jj}
\\
=\frac{1}{T}\bigg(\bb E \langle \ul{G} \rangle^2 (G^2)_{ii}G_{jj}F^{*}_{ii}F^{*}_{jj} -\bb E (G^2)_{ii}G_{jj}F^{*}_{ii}F^{*}_{jj}\bb E \langle \ul{G} \rangle^2 
\msp+\frac{1}{N}\bb E \langle\ul{G^2}\rangle (G^2)_{ii}G_{jj}F^{*}_{ii}F^{*}_{jj}\\+\frac{4}{N^2}\bb E  (G^3)_{ii}G_{jj}F^{*}_{ii}F^{*}_{jj}+\frac{2}{N^2}\bb E  (G^2)_{ii}(G^2)_{jj}F^{*}_{ii}F^{*}_{jj}
\msp+\frac{2}{N^2}\bb E (G^2)_{ii} (G^2)_{jj}F^{*}_{ii}F^{*}_{jj}
\\\msp
+ \frac{2}{N^2}\bb E (G^2)_{ii}G_{jj}(GF^{*2})_{ii}F^{*}_{jj}
+\frac{2}{N^2}\bb E  (G^2)_{ii}G_{jj}F^*_{ii}(GF^{*2})_{jj}-\sum_{k=2}^l W^{(6)}_k\bigg)+\OO(N^{-10})
\end{multline}
for some fixed $l \in \bb N$, where $W^{(6)}_k$ is defined analogously to $W^{(1)}_k$ in \eqref{4.11}. Now we plug \eqref{433vayne} into \eqref{430} and estimate the result by Proposition \ref{prop4.3} and Lemma \ref{useful}. We get $\eqref{430}=\OO(N^{3\gamma-4})+\OO(N^{\gamma-3})$. Thus we have arrived at
\begin{equation} \label{434}
W_3^{(2)}=\OO(N^{3\gamma-4}+N^{\gamma-3})\leq\cal E_1\,.
\end{equation}
Combining \eqref{4.233}, \eqref{425}, \eqref{watsthestory}, \eqref{qqq}, \eqref{434} and using Cauchy-Schwarz Inequality, we conclude the proof.
\end{proof} 

In the next lemma we deal with the second term on the right-hand side of \eqref{417}. By comparing with \eqref{498} below, one observes that this term does not appear in the complex Hermitian case, and we shall see that it is indeed this term that results in the correction term of order $N^{4\gamma-4}$, which explains why we only have the correction term in the real symmetric case. 
\begin{lemma} \label{lem44}
Recall $f_1$ from \eqref{488}. For the second term on the right-hand side of \eqref{417}, we have
\begin{equation*}
\begin{aligned}
\frac{1}{TN}\bb E\langle\ul{G^2}\rangle \langle \ul{F^*}\rangle=\frac{f_1(z_1,z^*_2)}{N^3(z_1-z_2^*)^3}+\frac{2E\,\mathrm{i}}{N^3\kappa^3(z_1-z_2^*)^2}+\frac{12}{N^4(z_1-z_2^{*})^4\kappa^2}+\cal E_1\,.
\end{aligned}
\end{equation*}
\end{lemma}

\begin{proof}
The proof is similar to that of Lemma \ref{lem4.2}, i.e.\ we keep using cumulant expansion formula \ref{eq:cumulant_expansion} until we can apply Proposition \ref{prop4.3} to estimate the results as desired. 

\emph{Step 1.} In this step we perform the cumulant expansion for the term $\bb E\langle\ul{G^2}\rangle \langle \ul{F^*}\rangle$. Similar to \eqref{4.11}, we have for some fixed $l \in \bb N$ that
\begin{equation*}
\bb E \langle \ul{G^2} \rangle \langle \ul{F^*} \rangle=\frac{1}{T_*} \bigg(\bb E \langle \ul{G^2} \rangle \langle \ul{F^*} \rangle^2 +\frac{1}{N}\bb E \langle \ul{G^2} \rangle \langle \ul{F^{*2}} \rangle+\frac{2}{N^2}\bb E \ul{G^3F^{*}}-\sum_{k=2}^l W^{(7)}_k+O(N^{-10})\bigg)\,,
\end{equation*}
where 
\begin{equation*}
W^{(7)}_k\deq \frac{1}{N} \sum_{i,j}\frac{1}{k!}\cal C_{k+1}(H_{ji}) \bb E \partial^k_{ji} (\langle \ul{G^2}\rangle (F^*)_{ij})\,,
\end{equation*}
As in Section \ref{sec4.1}, we can use Proposition \ref{prop4.3} and Lemma \ref{useful} to show that
$
T_*^{-1}W_k^{(7)}=\OO(N^{2\gamma-2.5})
$
for $k=2$ and $k\ge 4$. Thus
\begin{equation} \label{441}
\frac{1}{TN}\bb E \langle \ul{G^2} \rangle \langle \ul{F^*} \rangle=\frac{1}{TT_*N} \bigg(\bb E \langle \ul{G^2} \rangle \langle \ul{F^*} \rangle^2 +\frac{1}{N}\bb E \langle \ul{G^2} \rangle \langle \ul{F^{*2}} \rangle+\frac{4}{N^2}\bb E \ul{G^3F^{*}}- W^{(7)}_3\bigg)+\OO(N^{2\gamma-7/2})\,.
\end{equation}
Now we analyse each term on the right-hand side of \eqref{441} separately. In Steps 2 to 5 below, we are going to compute/estimate the first to fourth term on the right-hand side of \eqref{441} respectively.

\emph{Step 2.} Similar to \eqref{4.11}, we have
\begin{equation*}
\begin{aligned}
\bb E \langle \ul{G^2} \rangle \langle \ul{F^*} \rangle^2 &=\frac{1}{T}\bigg(\bb E \langle \ul{G} \rangle\langle \ul{F^*} \rangle^2+2\bb E \langle \ul{G^2} \rangle\langle\ul{G} \rangle \langle \ul{F^*} \rangle^2-2\bb E \langle \ul{F^*} \rangle^2	\bb E \langle \ul{G^2} \rangle \langle \ul{G} \rangle +\frac{2}{N} \bb E \langle \ul{G^3}\rangle  \langle\ul{F^*}\rangle^2\\
&\msp+\bb E \langle \ul{G}\rangle \langle \ul{F^*}\rangle^2 \bb E \ul{G^2}+\frac{4}{N^2}\bb E \langle \ul{F^{*}}\rangle \langle \ul{G^2F^{*2}} \rangle -\sum_{k=2}^l W^{(8)}_k+\OO(N^{-10})\bigg)
\end{aligned}
\end{equation*}
for some fixed $l \in \bb N$, where $W^{(8)}_k$ is defined analogously to $W^{(1)}_k$ in \eqref{4.11}. Estimating the above by Proposition \ref{prop4.3} and Lemma \ref{useful} shows
\begin{equation} \label{4433}
\frac{1}{TT_*N}\bb E \langle \ul{G^2} \rangle \langle \ul{F^*} \rangle^2=\OO(N^{3\gamma-4}+N^{5\gamma-5})=\cal E_1\,.
\end{equation}

\emph{Step 3.} Similar to \eqref{4.11}, we have
\begin{equation} \label{444}
\begin{aligned}
\bb E \langle \ul{G^2} \rangle \langle \ul{F^{*2}} \rangle&=\frac{1}{T_*}\bigg(\bb E \langle \ul{G^2}\rangle \langle \ul{F^*}\rangle +2\bb E \langle\ul{G^{2}} \rangle \langle \ul{F^{*2}}\rangle \langle\ul{F^*}\rangle+\frac{2}{N}\bb E \langle \ul{F^{3*}}\rangle\langle \ul{G^2}\rangle+2\bb E \langle\ul{G^2} \rangle \langle \ul{F^*}\rangle \bb E \ul{F^{*2}}\\&\msp+\frac{4}{N^2}\bb E \ul{G^3F^{*2}}-\sum_{k=2}^l W^{(9)}_k+\OO(N^{-10})\bigg)
\end{aligned}
\end{equation}
for some fixed $l \in \bb N$, where $W^{(9)}_k$ is defined analogously to $W^{(1)}_k$ in \eqref{4.11}. We are going to see the fifth term of the right-hand side of \eqref{444} is the one that gives us the term of order $N^{4\gamma-4}$, and this is the term does not appear in the complex Hermitian case. By Proposition \ref{prop4.3} and Lemma \ref{useful}, we see that all but the fifth term on the right-hand side of \eqref{444} are bounded by $\OO(N^{3\gamma-2})+\OO(N^{5\gamma-3})\leq \cal E_1$. Thus by the resolvent identity we see that
\begin{multline} \label{4.46}
\bb E \langle \ul{G^2} \rangle \langle \ul{F^{*2}} \rangle=\frac{4}{T_*N^2}\bb E \ul{G^3F^{*2}}+\cal E_1\\
=\frac{4}{T_*N^2}\cdot \bigg( \frac{3(\bb E \ul{G}-\bb E \ul{F^*})}{(z_1-z_2^*)^4}+ \frac{\bb E \ul{G^3}}{(z_1-z_2^*)^2}-\frac{2\bb E \ul{G^2}+\bb E \ul{F^{*2}}}{(z_1-z_2^*)^3}\bigg)+\cal E_1
= \frac{12(\bb E \ul{G}-\bb E \ul{F^*})}{N^2(z_1-z_2^*)^4}+\cal E_1\,,
\end{multline}
where in the third step we use Proposition \ref{prop4.3}(i) to estimate $\bb E \ul{G^3}$, $\bb E \ul{G^2}$, and $\bb E \ul{F^{*2}}$ by $\OO(1)$\,.
Lemma \ref{lem4.3} implies 
\begin{equation} \label{449}
T=-z_1-2\bb E\ul{G}=-z_1-2m(z_1)+\OO(N^{-1})=-\sqrt{z_1^2-4}+\OO(N^{-1})=-\kappa\mathrm{i} +\OO(N^{-\gamma})\,.
\end{equation}
Thus Proposition \ref{prop4.3}, \eqref{4.46} and \eqref{449} shows
\begin{equation} \label{446}
\frac{1}{TT_*N^2}\bb E \langle \ul{G^2} \rangle \langle \ul{F^{*2}} \rangle=\frac{12}{N^4(z_1-z_2^{*})^4\kappa^2}+\cal E_1\,.
\end{equation}

\emph{Step 4.} By resolvent identity and Lemma \ref{lem4.3} we have
\begin{align} \label{452}
\frac{4}{TT_*N^3}\bb E \ul{G^3F^*}  &=\frac{4}{TT_*N^3}\bigg(\frac{\bb E \ul{G}-\bb E \ul{F^*}}{(z_1-z_2^*)^3}-\frac{\bb E \ul{G^2}}{(z_1-z_2^*)^2}+\frac{\bb E \ul{G^3}}{z_1-z_2^*}\bigg)
\\
&=-\frac{f_1(z_1,z_2^*)}{N^3(z_1-z_2^*)^3}+\frac{2E\,\mathrm{i}}{N^3\kappa^{3}(z_1-z_2^*)^2}+\cal E_1\,,
\end{align}
where in the second step we used Proposition \ref{prop4.3} to estimate $\bb E \ul{G^3}$ by $\OO(1)$.

\emph{Step 5.} Lastly we look at $W_3^{(7)}$. By applying the differential rule \eqref{3.15} carefully, we see that the two non-trivial terms in $W^{(7)}_3$ are
\begin{equation} \label{454}
\frac{1}{6N}\sum\limits_{i,j} \cal C_4(H_{ij}) (1+\delta_{ij})^{-3}\bb E \langle \ul{G^2} \rangle F^{*2}_{ii}F^{*2}_{jj}\quad \mbox{ and } \quad 
\frac{1}{6N^2}\sum\limits_{i,j} \cal C_4(H_{ij}) (1+\delta_{ij})^{-3}\bb E (G^3)_{ii}G_{jj}F^{*}_{ii}F^{*}_{jj}\,,
\end{equation}
where all other terms are bounded by $\OO(N^{3\gamma-3})$ by Proposition \ref{prop4.3}. Similarly to the way we dealt with \eqref{429} and \eqref{430}, we can estimate \eqref{454} and show that
\begin{equation} \label{456}
\frac{1}{TT_*N}W^{(7)}_3= \OO(N^{\gamma-3})+\OO(N^{3\gamma-4})\leq \cal E_1\,.
\end{equation}

By plugging \eqref{4433}, \eqref{446}, \eqref{452}, \eqref{456} into \eqref{441}, we conclude the proof of the lemma.
\end{proof}

\begin{lemma} \label{lem4.5}
Recall $f_{2+}$ from \eqref{488}. For the third term on the right-hand side of \eqref{417}, we have
\begin{multline*}
\frac{2}{TN^2}\bb E \ul{GF^{*2}}
\\
=-\frac{2}{N^2(z_1-z^*_2)^2}+\frac{1}{N^2}f_{2+}(z_1,z_2^*)
+\frac{\mathrm{i}}{N^3(z_1-z_2^{*})^2}\bigg(-\frac{E}{\kappa^{3}}+\frac{2\im m(E)^4}{\kappa^2}\sum_{i,j}\cal C_4(H_{ij})\bigg)+\cal E_1
\end{multline*}
\end{lemma}
\begin{proof}
The proof is a straightforward computation using the resolvent identity and Lemma \ref{lem4.3}. By the resolvent identity and $T=-z_1-2\bb E \ul{G}$ we have
\begin{equation} \label{458}
\frac{1}{T}\bb E \ul{GF^{*2}}
=-\frac{1}{(z_1-z_2^*)^2}+\frac{1}{z_1+2\bb E\ul{G}}\bigg(\frac{z_1+\bb E \ul{G}+\bb E \ul{F^*}}{(z_1-z_2^*)^2}+\frac{\bb E \ul{F^{*2}}}{z_1-z_2^*}\bigg)\,.
\end{equation}
By Lemma \ref{lem4.3}(i) we see that
\begin{multline*}
\frac{z_1+\bb E \ul{G}+\bb E \ul{F^*}}{(z_1-z_2^*)^2}+\frac{\bb E \ul{F^{*2}}}{z_1-z_2^*} = \frac{4+z_1z_2^*+\sqrt{z_1^2-4}\sqrt{z_2^{*2}-4}}{2\sqrt{z_2^{*2}-4}(\sqrt{z_1^2-4}-\sqrt{z_2^{*2}-4})^2}
\\
+\frac{1}{N(z_1-z_2^{*})^2}\bigg(\frac{E}{\kappa^2}-\frac{2\im m(E)^4}{\kappa}\sum_{i,j}\cal C_4(H_{ij})\bigg)+\OO(N^{\gamma-1}+N^{2\gamma-3/2})\,,
\end{multline*}
then the lemma follows by \eqref{458} and the fact $z_1+2\bb E \ul{G}=\sqrt{z_1^2-4}+\OO(N^{-1})=\kappa\,\mathrm{i}+\OO(N^{-\gamma})$.
\end{proof}
\subsection{Third cumulant} \label{section4.3} In this section we analyze the third-cumulant term on the right-hand side of \eqref{417}. We follow the same strategy as in Section \ref{sec:4.2}, namely we repeatedly use cumulant expansion formula \eqref{eq:cumulant_expansion} until the results are either computable by Lemma \ref{lem4.3} or can be estimated by Proposition \ref{prop4.3}.
\begin{lemma} \label{lem4.6}
Recall $f_4$ from \eqref{f34}. For the fourth term on the right-hand side of \eqref{417}, we have
\begin{equation*}
-\frac{1}{T}W^{(1)}_2=\frac{1}{N^2}f_4(z_1,z_2^*)\sum_i\cal C_3(H_{ii})+\cal E_1\,.
\end{equation*}
\end{lemma}
\begin{proof}
	Note that $W_2^{(1)}$ contains two differentials, we split 
	$$
	W_2^{(1)}=W^{(1,1)}_2+W^{(1,2)}_2\,,
	$$
	where $W^{(1,1)}_2$ corresponds to the terms where $\langle \ul{F^*}\rangle $ is differentiated, and $W^{(1,2)}_2$ corresponds to the terms where $\langle \ul{F^*}\rangle $ is not differentiated. We shall see that the order $\OO(N^{-5/2})$ terms will come from $W^{(1,1)}_2$, and $W^{(1,2)}_2$ is of order $\OO(N^{2\gamma-7/2})$. 
	
\emph{Step 1.} In this step we consider the term $W^{(1,1)}_2$. By applying the differential rule \eqref{3.15} carefully, we have	
	\begin{equation} \label{466}
	\begin{aligned}
	W^{(1,1)}_2&=\frac{1}{2N^2}\sum_{i,j} \cal C_3(H_{ij}) (1+\delta_{ij})^{-2}\bb E \Big(2(F^{*2})_{ii}F^*_{jj}G_{ij}+2(F^{*2})_{jj}F^*_{ii}G_{ij}+4(F^{*2})_{ij}F^*_{ij}G_{ij}\\
	&\msp+4(F^{*2})_{ij}G_{ii}G_{jj}+4(F^{*2})_{ij}G_{ij}^2\Big)\,.
	\end{aligned}
	\end{equation}
	From Lemma \ref{lem4.3} we see that $\bb E \ul{G} =m(z_1)+\OO(N^{-1})$. In the same spirit, we are going to show that in our computations for the above terms, $F^*_{ii}$ can be treated as $m(z_2^*)$ up to an error of order $\OO(N^{-1})$, and we have analogous approximations for other matrix entries. Let us consider the first term on the right-hand side of \eqref{466}. As in \eqref{4.11}, by the identity $z_1G=GH-I$ and cumulant expansion formula \eqref{eq:cumulant_expansion} we have
	\begin{multline} \label{467}
\frac{1}{N^2}\sum_{i,j} \cal C_3(H_{ij}) (1+\delta_{ij})^{-2}\bb E (F^{*2})_{ii}F^*_{jj}G_{ij}
	=\frac{1}{N^2}\sum_{i,j} \cal C_3(H_{ij}) (1+\delta_{ij})^{-2}\frac{1}{U}\bigg(\bb E (F^{*2})_{ii}F^*_{jj}\delta_{ij}
	\\
	+\bb E (F^{*2})_{ii}F^*_{jj}G_{ij}\langle\ul{G}\rangle +\frac{1}{N}\bb E F^*_{ij}F^*_{jj}(F^{*2}G)_{ii}
	+\frac{1}{N}\bb E F^*_{ii}F^*_{jj}(F^{*2}G)_{ij}\\+\frac{1}{N}\bb E F^{*2}_{ij}F^*_{jj}(F^{*}G)_{ii}+\frac{1}{N}\bb E F^{*2}_{ij}F^*_{jj}(F^{*}G)_{ii}+\frac{1}{N}\bb E (F^{*2})_{ii}F^*_{jj}G^2_{ij}
	-\sum_{k=2}^lW^{(10)}_k+O(N^{-10})\bigg)
	\end{multline}
	for some fixed $l \in \bb N$, where $W^{(10)}_k$ is defined analogously to $W^{(1)}_k$ in \eqref{4.11}, and $U\deq -z_1-\bb E \ul{G}$. Now we estimate the second cumulant terms by Proposition \ref{prop4.3} and the rest terms by the strategy described in Section \ref{sec4.1}, and we see that all but the first term on the right-hand side of \eqref{467} are bounded by $\OO(N^{2\gamma-7/2})$, thus
	\begin{equation*}
	\eqref{467}=\frac{1}{4UN^2}\sum_i \cal C_3(H_{ii})\bb E  (F^{*2})_{ii}F^*_{ii}+\cal E_1\,.
	\end{equation*}
	Similarly, one further replace the factors $(F^{*2})_{ii}$ and $F^*_{ii}$ in the last equation by $m'(z_2^*)$ and $m(z_2^*)$ respectively, and get
	\begin{equation*} 
	\eqref{467}=-\frac{m(z_1)m(z_2^*)^2}{4\sqrt{z_2^{*2}-4}N^2}\sum_i\cal C_3(H_{ii})+\cal E_1\,.
	\end{equation*}
	By computing other terms in \eqref{466} in a similar fashion,
	we have
	\begin{equation} \label{473}
	W_2^{(1,1)}=-\frac{m(z_1)m(z_2^*)(m(z_2^*)+m(z_1))}{\sqrt{z_2^{*2}-4}N^2}\sum_i\cal C_3(H_{ii})+\cal E_1\,.
	\end{equation} 
	
	\emph{Step 2.} When $\langle \ul{F^*}\rangle $ is not differentiated, we have
	\begin{equation} \label{474}
	\begin{aligned}
	W^{(1,2)}_2=-\frac{1}{2N}\sum_{i,j} \cal C_3(H_{ij}) (1+\delta_{ij})^{-2}\bb E \langle \ul{F^{*}} \rangle \big(6G_{ii}G_{jj}G_{ij}+2G_{ij}^3\big)\,.
	\end{aligned}
	\end{equation}
As in Step 1, we first perform cumulant expansion for the above terms, and then estimate or compute the resulting terms. Let us consider the first term on the right-hand side of \eqref{474}. As in Step 1, we have
	\begin{equation*}
	-\frac{3}{N}\sum_{i,j} \cal C_3(H_{ij}) (1+\delta_{ij})^{-2}\bb E \langle \ul{F^{*}} \rangle G_{ii}G_{jj}G_{ij}=-\frac{3}{4U^3N}\sum_i \cal C_3(H_{ii}) \bb E \langle \ul{F^*} \rangle+\OO(N^{2\gamma-7/2})=\cal E_1\,.
	\end{equation*}
	By estimating the other term on the right-hand side of \eqref{474} in a similar fashion, we can show that it is also bounded by $\OO(N^{2\gamma-7/2})$. Thus
	\begin{equation} \label{479}
	W^{(1,2)}_2=\cal E_1\,.
	\end{equation}
	By Lemma \ref{lem4.3} we see that $T=-\sqrt{z_1^2-4}+\OO(N^{-1})$,
	together with \eqref{473} and \eqref{479} we conclude the proof.
\end{proof}
\subsection{Fourth cumulant} \label{sec4.22}  In this section we analyse the fourth-cumulant terms on the right-hand side of \eqref{417}. The strategy is the same as in the previous two sections, and we shall omit any extraneous details.
\begin{lemma} \label{lem4.7}
Recall $f_3$ from \eqref{f34}. For the fifth term on the right-hand side of \eqref{417}, we have
\begin{equation*}
-\frac{1}{T}W^{(1)}_3=\bigg(\frac{1}{N^2}f_3(z_1,z_2^*)+\frac{2E(E^2-2)\mathrm{i}}{(z_1-z_2^*)^2N^3\kappa}\bigg)\sum_{i,j}\cal C_4(H_{ij})+\cal E_1\,.
\end{equation*}
\end{lemma}
\begin{proof}
By applying the differential \eqref{3.15} carefully, we see that
\begin{align*} 
&-\frac{1}{T}W^{(1)}_3
\\
=&\;\frac{1}{6TN^2}\sum_{i,j} \cal C_4(H_{ij})(1+\delta_{ij})^{-3}\bb E \Big(12 (F^{*2})_{ii}F^*_{jj}F^*_{ij}G_{ij}+12(F^{*2})_{ij}F^*_{ii}F^*_{jj}G_{ij}
+6(F^{*2})_{ii}F^*_{jj}G_{ii}G_{jj}\\&+6(F^{*2})_{ii}F^*_{jj}G_{ij}^2+12(F^{*2})_{jj}F^*_{ii}F^*_{ij}G_{ij}
+6(F^{*2})_{jj}F^*_{ii}G_{ii}G_{jj}+6(F^{*2})_{jj}F^*_{ii}G_{ij}^2\\&+12(F^{*2})_{ij}F^{*2}_{ij}G_{ij}
+12(F^{*2})_{ij}F^*_{ij}G_{ii}G_{jj}+12(F^{*2})_{ij}F^*_{ij}G_{ij}^2+36(F^{*2})_{ij}G_{ii}G_{jj}G_{ij}\\
&+12(F^{*2})_{ij}G_{ij}^3+36N\langle\ul{F^*}\rangle G_{ii}G_{jj}G_{ij}^2+6N\langle \ul{F^*}\rangle G_{ii}^2G_{jj}^2+6N\langle \ul{F^*}\rangle G_{ij}^4 \Big)\,.
\end{align*}
The terms on the right-hand side of the above can be estimated in similar way as in the proof of Lemma \ref{lem4.6}. As a result, $-T^{-1}W^{(1)}_3$ simplifies to
\begin{multline*}
\sum_{i,j}\frac{(1+\delta_{ij})^{-3}}{6TN^2} \cal C_4(H_{ij})\bb E \Big(12(F^{*2})_{ij}F^*_{ii}F^*_{jj}G_{ij}
+12(F^{*2})_{ii}F^*_{jj}G_{ii}G_{jj}
\\
+36(F^{*2})_{ij}G_{ii}G_{jj}G_{ij}+6N\langle \ul{F^*}\rangle G_{ii}^2G_{jj}^2 \Big)+\cal E_1\\=-\frac{2m(z^*_2)^2}{(z_1-z_2^*)^2N^3}\sum_{i,j}\cal C_4(H_{ij})+\bigg(\frac{1}{N^2}f_3(z_1,z^*_2)-\frac{4}{(z_1-z_2^*)^2N^3}\bigg)\sum_{i,j}\cal C_4(H_{ij})\\-\frac{6m(z_1)^2}{(z_1-z_2^*)^2N^3}\sum_{i,j}\cal C_4(H_{ij})+\frac{8m(z_1)^3}{\sqrt{z_1^2-4}(z_1-z_2^*)^2N^3}\sum_{i,j} \cal C_4(H_{ij})+\cal E_1\,,
\end{multline*}
and we conclude the proof by using
\begin{equation*} 
-2m(z_2^*)^2-4-6m(z_1)^2+8m(z_1)^3(z_1^2-4)^{-1/2}=\frac{2E(E^2-2)\mathrm{i}}{\kappa}+\OO(N^{-\gamma})\,. \qedhere
\end{equation*}
\end{proof}

\subsection{Putting everything together} \label{sec:4.3} 
By plugging Lemmas \ref{lem4.2} -- \ref{lem4.7}  and \eqref{488} -- \eqref{491} into \eqref{417} we complete the proof Theorem \ref{thm_resolvent} in the real symmetric case.

\subsection{Complex Hermitian case} \label{sec:4.4} 
In this section we focus on Theorem \ref{thm_resolvent} when $H$ is complex Hermitian ($\beta = 2$). The proof is similar to the real symmetric case ($\beta = 1$), and we only indicate the arguments that differ from the previous ones. For any differentiable $f=f(H)$ we set
\begin{equation*} 
\partial_{ij}f(H)\deq \frac{\mathrm{d}}{\mathrm{d}t}\Big{|}_{t=0} f\pb{H+t\,\tilde{\Delta}^{(ij)}}\,,
\end{equation*}
where $\tilde{\Delta}^{(ij)}$ denotes the matrix whose entries are zero everywhere except at the site $(i,j)$ where it is one: $\tilde{\Delta}^{(ij)}_{kl} =\delta_{ik}\delta_{jl}$. Then by using Lemmas \ref{lem:cumulant_expansion} (for the diagonal entries of $H$) and \ref{lem:5.1} (for the off-diagonal entries of $H$) we have for any some fixed $l \in \bb N$,
\begin{equation} \label{4.5c}
\begin{aligned}
z_1\bb E \langle \ul{G} \rangle \langle \ul{F^*} \rangle=\frac{1}{N} \sum_{i,j}\bb E G_{ij}H_{ji}\langle \ul{F^*} \rangle
=\frac{1}{N^2} \sum_{i,j} \bb E \partial_{ij} (G_{ij}\langle \ul{F^*}\rangle)+\sum_{k=2}^l W^{(11)}_k+O(N^{-10})\,,
\end{aligned}
\end{equation}
where
\begin{equation} \label{494}
W^{(11)}_k\deq \frac{1}{N} \sum_{i,j}\sum_{p+q=k} \frac{1}{p!\,q!\,(1+\delta_{ij})^{k}}\cal C_{p,q+1}(H_{ij}) \bb E \partial^p_{ij}\partial^q_{ji}(G_{ij}\langle \ul{F^*}\rangle)\,.
\end{equation}
The $(1+\delta_{ij})^{-k}$ factor in \eqref{494} is because for $i=j$, we need to use Lemma \ref{lem:cumulant_expansion} for the real random variables $H_{ii}$, and we have
$$
\sum_{p+q=k} \frac{1}{p!\,q!\,2^k}=\frac{1}{k!}\,.
$$ Note that for the complex Hermitian case,
\begin{equation} \label{4.9}
\partial_{kl} G_{i  j}=-G_{ik}G_{kj}\,,
\end{equation}
which gives
\begin{equation*}
\frac{1}{N^2} \sum_{i,j} \bb E \partial_{ij} (G_{ij}\langle \ul{F^*}\rangle)=-2 \bb E \ul{G} \,\bb E \langle \ul{G} \rangle \langle \ul{F} \rangle -\bb E \langle \ul{G} \rangle^2 \langle \ul{F} \rangle-\frac{1}{N^2} \bb E \ul{GF^{*2}}\,.
\end{equation*}
By applying Lemma \ref{useful} and Proposition \ref{prop4.3}(ii), and following a similar argument as in Section \ref{sec4.1}, one can show that
\begin{equation*}
\frac{1}{T}W^{(11)}_k=\OO(N^{\gamma-3})
\end{equation*}
for all $k \ge 4$. Together with \eqref{4.5c} we have 
\begin{equation} \label{498}
\bb E \langle \ul{G} \rangle \langle \ul{F^*} \rangle=\frac{1}{T}\bigg(\bb E \langle \ul{G} \rangle^2 \langle \ul{F^*} \rangle+\frac{1}{N^2}\bb E \ul{GF^{*2}}-W^{(11)}_2-W^{(11)}_3\bigg)+\OO(N^{\gamma-3})\,.
\end{equation}
where $T=-z-2\bb E\ul{G}$. By a comparison of \eqref{417} and \eqref{498} we can already see some difference between the real and complex cases. For examples, the leading term $2(TN^2)^{-1}\bb E \ul{GF^{*2}}$ is halved in the complex case, and we do not have the term $(TN)^{-1}\bb E \langle \ul{G^2} \rangle \langle \ul{F^*}\rangle$. 

Now we take a closer look at the terms in \eqref{498}. As in Lemmas \ref{lem4.2} and \ref{lem4.5}--\ref{lem4.7}, we have the following lemma.
\begin{lemma} \label{lem4.8}
Recall $f_{2+},f_3,f_4$ from \eqref{488} and \eqref{f34}. For the terms on the right-hand side of \eqref{498}, we have the estimate \eqref{GF_est}; we also have
\begin{equation}\label{5.12 ii}
\frac{1}{TN^2}\bb E \ul{GF^{*2}}=-\frac{1}{N^2(z_1-z^*_2)^2}+\frac{1}{2N^2}f_{2+}(z_1,z_2^*)
+\frac{\im m(E)^4\mathrm{i}}{N^3(z_1-z_2^{*})^2\kappa^2}\sum_{i,j}\cal C_{2,2}(H_{ij})+\cal E_1\,,
\end{equation}
\begin{equation} \label{5.12 iii}
-\frac{1}{T}W^{(11)}_2= \frac{1}{N^2}f_4(z_1,z_2^*)\sum_i\cal C_3(H_{ii})+\cal E_1\,,
\end{equation}
and
\begin{equation} \label{5.12 iv}
-\frac{1}{T}W^{(11)}_3=\bigg(\frac{1}{N^2}f_3(z_1,z_2^*)+\frac{E(E^2-2)\mathrm{i}}{(z_1-z_2^*)^2N^3\kappa}\bigg)\sum_{i,j}\cal C_{2,2}(H_{ij})+\cal E_1\,.
\end{equation}
\end{lemma}
It is easy to see that \eqref{498} and Lemma \ref{lem4.8} imply \eqref{2.4ii}, which finishes the proof of Theorem \ref{thm_resolvent} for the complex Hermitian case. The proof of Lemma \ref{lem4.8} is similar to those of Lemmas \ref{lem4.2} and \ref{lem4.5} - \ref{lem4.7}, and we shall give a rough proof to sketch the difference.
\begin{proof} [Proof of Lemma \ref{lem4.8}]
(i) The proof of \eqref{GF_est} in the complex Hermitian case is similar to that of Lemma \ref{lem4.2}. By Lemmas \ref{lem:cumulant_expansion}, \ref{lem:5.1}, \ref{useful}, Proposition \ref{prop4.3}(ii), and following a similar argument as in Section \ref{sec4.1}, one can show that 
\begin{equation*} 
\bb E \langle \ul{G} \rangle^2 \langle \ul{F^*} \rangle=\frac{1}{T_*}\bigg(\bb E \langle\ul{G}\rangle^2\langle \ul{F^*}\rangle^2-\bb E \langle\ul{G}\rangle^2\bb E\langle \ul{F^*}\rangle^2+\frac{2}{N^2}\bb E\langle\ul{G}\rangle \langle \ul{G^2F^*}\rangle- W^{(12)}_3\bigg)+\cal E_1\,,
\end{equation*}
where $W_3^{(12)}$ is defined analogously as $W^{(11)}_3$ in \eqref{494}.
As in the proof of Lemma \ref{lem4.2}, we can show that 
\begin{equation*}
\bb E \langle \ul{G} \rangle^2 \langle \ul{F^*} \rangle^2+\frac{2}{N^2}\bb E\langle\ul{G}\rangle \langle \ul{G^2F^*}\rangle
=\cal E_1\,,\quad 
\bb E \langle\ul{G}\rangle^2\bb E\langle \ul{F^*}\rangle^2=\OO(N^{-4})\,,
\quad\mbox{and} \quad
W^{(12)}_3=\cal E_1\,.
\end{equation*}
The above relations together with $|T|^{-1}=\OO(1)$ imply the desired result.

(ii) The proof of \eqref{5.12 ii} is close to that of Lemma \ref{lem4.5}. We have
	\begin{equation*}
	\frac{1}{T}\bb E \ul{GF^{*2}}=-\frac{1}{(z_1-z_2^*)^2}+\frac{1}{z_1+2\bb E\ul{G}}\bigg(\frac{z_1+\bb E \ul{G}+\bb E \ul{F^*}}{(z_1-z_2^*)^2}+\frac{\bb E \ul{F^{*2}}}{z_1-z_2^*}\bigg)\,,
	\end{equation*}
	and together with Lemma \ref{lem4.3} (ii) we have the desired result.

(iii) The proof of \eqref{5.12 iii} is similar to that of Lemma \ref{lem4.6}. We split 
$$
W_2^{(11)}=W^{(11,1)}_2+W^{(11,2)}_2\,,
$$
where $W^{(11,1)}_2$ corresponds to the terms that $\langle \ul{F^*}\rangle $ is differentiated, and $W^{(11,2)}_2$ corresponds to the terms that $\langle \ul{F^*}\rangle $ is not differentiated. When $\langle \ul{F^*} \rangle $ is differentiated, we have	
\begin{equation*} 
\begin{aligned}
W^{(11,1)}_2&=\frac{1}{2N^2}\sum_{i,j} \frac{1}{(1+\delta_{ij})^{2}}\cal C_{2,1}(H_{ij}) \bb E \Big(2G_{ii}G_{jj}(F^{*2})_{ji}+2G_{ij}F^*_{ji}(F^{*2})_{ji}\Big)\\
&\msp+\frac{1}{N^2}\sum_{i,j}  \frac{1}{(1+\delta_{ij})^{2}}\cal C_{1,2}(H_{ij}) \bb E \Big(G_{ii}G_{jj}(F^{*2})_{ij}+G_{ij}^2(F^{*2})_{ij}+G_{ij}F^*_{ii}(F^{*2})_{jj}+G_{ij}F^*_{jj}(F^{*2})_{ii}\Big)\\
&\msp+\frac{1}{2N^2}\sum_{i,j}  \frac{1}{(1+\delta_{ij})^{2}}\cal C_{0,3}(H_{ij}) \bb E \Big(2G_{ij}^2(F^{*2})_{ij}+2G_{ij}F^*_{ij}(F^{*2})_{ij}\Big)\,.
\end{aligned}
\end{equation*}
By Lemmas \ref{lem:cumulant_expansion}, \ref{lem:5.1}, and \ref{useful}, together with Proposition \ref{prop4.3}, one can deduce that
\begin{equation*}
-\frac{1}{T}W^{(11,1)}_2=\frac{1}{N^2}f_4(z_1,z_2^*)\sum_i\cal C_3(H_{ii})+\cal E_1\,.
\end{equation*}
As in the proof of Lemma \ref{lem4.6}, we can also show that
$
-T^{-1}W_2^{(11,2)}=\cal E_1,
$
and this completes the proof.

(iv) The proof of \eqref{5.12 iv} is similar to that of Lemma \ref{lem4.7}. Let us split
\begin{equation*}
W^{(11)}_3=W^{(11,1)}_3+W^{(11,2)}_3+W^{(11,3)}_3+W^{(11,4)}_3\,,
\end{equation*}
where for $n=1,\dots,4$,
\begin{equation*}
W^{(11,n)}_3\deq  \frac{1}{N} \sum_{i,j} \frac{1}{(n-1)!\,(4-n)!\,(1+\delta_{ij})^{3}}\cal C_{n-1,5-n}(H_{ij}) \bb E \partial^{n-1}_{ij}\partial^{4-n}_{ji}(G_{ij}\langle \ul{F^*}\rangle)\,.
\end{equation*}
As in the proof of Lemma \ref{lem4.7}, we can show that
\begin{equation*}
W^{(11,n)}_3=\OO(N^{3\gamma-4}+N^{\gamma-3})
\end{equation*}
for $n=1,2,4.$ The leading terms are contained in $W^{(11,3)}_3$. More precisely, by applying \eqref{4.9} carefully we have
\begin{equation*}
\begin{aligned}
-\frac{1}{T}W^{(11,3)}_3&=\frac{1}{2TN^2}\sum_{i,j}(1+\delta_{ij})^{-3}\cal C_{2,2}(H_{ij}) \bb E \Big(4G_{ii}G_{jj}G_{ij}G_{ji}\langle\ul{F^*}\rangle+2G_{ii}^2G_{jj}^2\langle\ul{F^*}\rangle\\
&\msp+2G_{ii}G_{jj}G_{ji}(F^{*2})_{ij}+4G_{ii}G_{jj}G_{ij}(F^{*2})_{ji}+2G_{ii}G_{jj}(F^{*2})_{ii}F^*_{jj}+2G_{ii}G_{jj}(F^{*2})_{jj}F^*_{ii}\\
&\msp+2G_{ij}^2(F^{*2})_{ji}F^*_{ji}+2G_{ij}(F^{*2})_{ii}F^*_{jj}F^*_{ji}+2G_{ij}(F^{*2})_{jj}F^*_{ii}F^*_{ji}+2G_{ij}(F^{*2})_{ji}F^*_{ii}F^*_{jj}\Big)\,.
\end{aligned}
\end{equation*}
By Lemmas \ref{lem:cumulant_expansion}, \ref{lem:5.1}, and \ref{useful}, together with Proposition \ref{prop4.3}, we can show that $-T^{-1}W^{(11,3)}_3$ simplifies to
\begin{multline*} 
\frac{1}{2TN^2}\sum_{i,j}(1+\delta_{ij})^{-3}\cal C_{2,2}(H_{ij}) \bb E \Big(2G_{ii}^2G_{jj}^2\langle\ul{F^*}\rangle
+2G_{ii}G_{jj}G_{ji}(F^{*2})_{ij}+4G_{ii}G_{jj}G_{ij}(F^{*2})_{ji}\\+2G_{ii}G_{jj}(F^{*2})_{ii}F^*_{jj}+2G_{ii}G_{jj}(F^{*2})_{jj}F^*_{ii}
+2G_{ij}(F^{*2})_{ji}F^*_{ii}F^*_{jj}\Big)+\cal E_1
\\=\bigg(\frac{1}{N^2}f_3(z_1,z^*_2)+\frac{4m(z_1)^3}{\sqrt{z_1^2-4}(z_1-z_2^*)^2N^3}-\frac{3m(z_1)^2}{(z_1-z_2^*)^2N^3}
\\
-\frac{2}{(z_1-z_2^*)^2N^3}-\frac{m(z_2^*)^2}{(z_1-z_2^*)^2N^3}\bigg)\sum_{i,j}\cal C_{2,2}(H_{ij})+\cal E_1\,,
\end{multline*}
and the proof finishes by using
\begin{equation*} 
4m(z_1)^3(z_1^2-4)^{-1/2}-3m(z_1)^2-2-m(z_2^*)^2=\frac{E(E^2-2)\mathrm{i}}{\kappa}+\OO(N^{-\gamma})\,. \qedhere
\end{equation*}
\end{proof}

\section{Correlation of general functions: proof of Theorem \ref{mainthm}} \label{section5}
In this section we prove Theorem \ref{mainthm} for real and symmetric $H$; the complex case follows in a similar fashion.  
We begin with a version of the Cauchy integral formula for smooth functions, sometimes referred to as the Cauchy-Pompeiu formula.
\begin{lemma} \label{lem5.1}
Let $\phi \in C^{1}(\bb C)$ such that $\phi(z) = 0$ for large enough $\abs{\re z}$.
For $a>0$ define the strip $D_a=\{(x+\ii y)\col x \in \bb R, |y|\le a \}$. For any $\lambda \in \bb R$ we have
\begin{equation*}
\phi(\lambda)=\frac{\ii}{2\pi} \oint_{\partial D_a}\frac{\phi(z)}{\lambda-z}\, \dd z+\frac{1}{\pi} \int_{D_a} \frac{\partial_{\bar{z}} \phi (z)}{\lambda-z}\, \dd^2z\,.
\end{equation*}
\end{lemma}
\begin{proof}
Let $\varepsilon >0$ be small such that $B_{\varepsilon} (\lambda) \subset D_a$, where we use $B_{\varepsilon}(\lambda)$ to denote the ball centred at $\lambda$ with radius $\varepsilon$. The result follows by first using Green's formula for the function $\phi(z)/(\lambda-z)$ on $D_a\backslash B_{\varepsilon}(\lambda)$, and then letting $\varepsilon \downarrow 0$.
\end{proof}

\begin{proof} [Proof of Theorem \ref{mainthm}]Let us look at the real symmetric case. From \eqref{1.3}, one readily checks that $p_E(u,v)$ can be rewrote into
\begin{equation*}
\frac{1}{(N\varrho_E)^2}\sum_{i,j}\bb E \Big \langle \delta\Big(E+\frac{u}{N\varrho_E}-\lambda_i\Big) \Big \rangle \Big \langle \delta\Big(E+\frac{v}{N\varrho_E}-\lambda_j\Big) \Big\rangle
-\frac{1}{(N\varrho_E)^2} \sum_i \bb E \delta(u-v)\delta\Big(E+\frac{u}{N\varrho_E}-\lambda_i\Big)\,,
\end{equation*}
thus
\begin{multline} \label{n5.7}
\int p_E(u,v) f_{-}(u)g_{+}(v) \,\mathrm{d}u\,\mathrm{d}v=\frac{1}{N^2}\bb E \Big\langle \tr \frac{1}{\eta}f\Big(\frac{(H-E)\varrho_E-\omega}{\eta}\Big) \Big \rangle \Big\langle \tr \frac{1}{\eta}g\Big(\frac{(H-E)\varrho_E+\omega}{\eta}\Big) \Big \rangle\\
-\frac{1}{N^2\eta^2} \bb E \tr f\Big(\frac{(H-E)\varrho_E-\omega}{\eta}\Big)g\Big(\frac{(H-E)\varrho_E+\omega}{\eta}\Big) \,.
\end{multline}
Since $\supp f, \supp g \in [-M,M]$, where $3M\eta \leq \omega$, then $f(x-\omega/\eta)g(x+\omega/\eta)\equiv 0$.  
Thus the last term on right-hand side of \eqref{n5.7} vanishes, and  
\begin{equation} \label{n5.3}
\begin{aligned}
\int p_{E}(u,v) f_{-}(u)g_{+}(v) \,\mathrm{d}u\,\mathrm{d}v=\frac{1}{N^2}\bb E \Big\langle \tr \frac{1}{\eta}f\Big(\frac{(H-E)\varrho_E-\omega}{\eta}\Big) \Big \rangle \Big\langle \tr \frac{1}{\eta}g\Big(\frac{(H-E)\varrho_E+\omega}{\eta}\Big) \Big \rangle\,.
\end{aligned}
\end{equation}
(Note that \eqref{n5.3} is only valid for compactly supported functions $f$ and $g$; for more general $f,g$ (e.g.\ with polynomial decay), one also needs to compute the last term on the right-hand side of \eqref{n5.7}, which for simplicity we do not do in this paper.)

We now use Lemma \ref{lem5.1} and Theorem \ref{thm_resolvent}(i) to compute the right-hand side of \eqref{n5.3}. For any $f \in C^\infty_c(\R)$ and $k \in \bb N_+$, let $\tilde{f}$ denote the almost analytic extension of $f$ of order $k$, defined as
\begin{equation*}
\tilde{f}(x+\ii y) \deq f(x)+\sum_{j=1}^{k}\frac{1}{j!}(\ii y)^j f^{(j)}(x)\,.
\end{equation*}
We abbreviate $f_{\eta}(x)=\eta^{-1}f(((x-E)\varrho_E-\omega)/\eta)$ and $g_{\eta}(x)=\eta^{-1}g(((x-E)\varrho_E+\omega)/\eta)$. We apply Lemma \ref{lem5.1} with $a\deq\sqrt{\eta/N}$ to the functions $\phi = \tilde f_\eta$ and $\phi = \tilde g_\eta$, where we choose $k\deq\ceil{6/\tau}$. (From the computations below (e.g.\ \eqref{5.8}), we see that a larger $k$ results in a smaller bound for the error terms.) This gives
\begin{equation*}
f_{\eta}(H)=\frac{\ii}{2\pi} \oint_{\partial D_a}\tilde f_{\eta}(z)G(z)\, \dd z+\frac{1}{\pi} \int_{D_a} \partial_{\bar{z}}\tilde f_{\eta}(z)G(z)\, \dd^2z\,,
\end{equation*}
and together with \eqref{n5.3} we have
\begin{equation} \label{n5.4}
\begin{aligned}
\int p_{E}(u,v) f_{-}(u)g_{+}(v) \,\mathrm{d}u\,\mathrm{d}v=&\,-\frac{1}{4\pi^2} \oint_{\partial D_a}\oint_{\partial D_a}\tilde f_{\eta}(z_1)\tilde g_{\eta}(z_2)\bb E \langle \ul{G}(z_1)\rangle  \langle \ul {G}(z_2)\rangle \, \dd z_1\,\dd z_2\\
&\,+\frac{\ii}{2\pi^2} \int_{D_a} \oint_{\partial D_a} \tilde{f}_{\eta}(z_1)\partial_{\bar{z}}\tilde g_{\eta}(z_2)\bb E \langle \ul {G}(z_1)\rangle  \langle \ul {G}(z_2)\rangle\, \dd z_1 \,\dd^2z_2\\
&\,+\frac{\ii}{2\pi^2} \oint_{\partial D_a} \int_{ D_a} \partial_{\bar{z}}\tilde{f}_{\eta}(z_1)\tilde g_{\eta}(z_2)\bb E \langle \ul G(z_1)\rangle  \langle \ul G(z_2)\rangle\, \dd^2 z_1 \,\dd z_2\\
&\, + \frac{1}{\pi^2} \int_{ D_a} \int_{ D_a} \partial_{\bar{z}}\tilde{f}_{\eta}(z_1)\partial_{\bar{z}}\tilde g_{\eta}(z_2)\bb E \langle \ul G(z_1)\rangle  \langle \ul G(z_2)\rangle\, \dd^2 z_1 \,\dd^2 z_2\,.
\end{aligned}
\end{equation}

Let us first look at the last term on the right-hand side of \eqref{n5.4}. We write $z_1=x_1+\ii y_1$, and $z_2=x_2+\ii y_2$. By \eqref{3.4} we know
\begin{equation*}
| \langle \ul G(z_1)\rangle| \prec \frac{1}{|Ny_1|}  \ \ \mbox{and}\ \  |\langle \ul G(z_2)\rangle | \prec \frac{1}{|Ny_2|}\,,
\end{equation*}
together with $\partial_{\bar{z}}f_{\eta}(z)=\frac{1}{2k!}(\ii y)^kf^{(k+1)}_{\eta}(x)$ we have 
\begin{equation*}
\begin{aligned} 
&\mspace{-40mu}\bigg|\frac{1}{\pi^2}\int_{ D_a} \int_{ D_a} \partial_{\bar{z}}\tilde{f}_{\eta}(z_1)\partial_{\bar{z}}\tilde g_{\eta}(z_2)\bb E \langle \ul G(z_1)\rangle  \langle \ul G(z_2)\rangle\, \dd^2 z_1 \,\dd^2 z_2\,\bigg|
\\
&\prec \frac{1}{N^2\eta^{2k+2}} \int_{-a}^{a} \int_{-a}^{a} \big|y_1^{k-1}y_2^{k-1}\big|\,\dd y_1 \dd y_2 \prec  \frac{1}{N^2\eta^{2k+2}}\cdot \Big(\frac{\eta}{N}\Big)^k=\frac{1}{(N\eta)^{k+2}}=\OO(N^{-6})\,,
\end{aligned}
\end{equation*}
where in the second step we use the fact $\int |f^{(k+1)}_{\eta}| \prec \eta^{-(k+1)}$.

A similar computation works for the third term on the right-hand side of \eqref{n5.4}. More precisely, note that $\partial D_a=\{z \in \bb C\col \im z=a\}\cup \{z \in \bb C\col \im z=-a\}$, and we have
\begin{equation} \label{5.8}
\begin{aligned} 
&\mspace{-20mu}\bigg|\frac{\ii}{2\pi^2} \int_{\{\im z_2=a\}} \int_{ D_a} \partial_{\bar{z}}\tilde{f}_{\eta}(z_1)\tilde g_{\eta}(z_2)\bb E \langle \ul G(z_1)\rangle  \langle \ul G(z_2)\rangle\, \dd^2 z_1 \,\dd z_2\,\bigg| \notag\\
&\prec  \frac{1}{N^2} \int_{ D_a} \Big|\frac{1}{y_1a}y_1^kf^{(k+1)}_{\eta}(x_1)\Big|\, \dd^2 z_1 \prec \frac{1}{N^2a\eta^{k+1}}\int_{-a}^{a} \big|y_1^{k-1}\big|\, \dd y_1\prec \frac{1}{(N\eta)^{(k+3)/2}} =\OO(N^{-3})\,,
\end{aligned}
\end{equation}
where in the second step we use the fact $\int |\tilde g_{\eta}(x+\ii a)| \,\dd x =\OO(1)$. In the same way we can estimate the above with $\{\im z_2=a\}$ replaced by $\{\im z_2=-a\}$, and this completes the estimate of the third term on the right-hand side of \eqref{n5.4}.  

The same thing works for the second term on the right-hand side of \eqref{n5.4}, and we arrive at
\begin{equation} \label{510}
\int p_{E}(u,v) f_{-}(u)g_{+}(v) \,\mathrm{d}u\,\mathrm{d}v=\,-\frac{1}{4\pi^2} \oint_{\partial D_a}\oint_{\partial D_a}\tilde f_{\eta}(z_1)\tilde g_{\eta}(z_2)\bb E \langle \ul{G}(z_1)\rangle  \langle \ul {G}(z_2)\rangle \, \dd z_1\,\dd z_2+\OO(N^{-3})\,.
\end{equation}
Now again we split $\partial D_a=\{z \in \bb C\col \im z=a\}\cup \{z \in \bb C\col \im z=-a\}$. It follows from \eqref{2.4i} that
\begin{multline} \label{515}
-\frac{1}{4\pi^2}\int_{\{\im z_2=-a\}}\int_{\{\im z_1=a\}}\tilde f_{\eta}(z_1)\tilde g_{\eta}(z_2)\bb E \langle \ul{G}(z_1)\rangle  \langle \ul {G}(z_2)\rangle \, \dd z_1\,\dd (-z_2)\\=-\frac{1}{4\pi^2}
\int_{+\infty}^{-\infty}\int_{-\infty}^{+\infty}\tilde f_{\eta}(x_1+\ii a)\tilde g_{\eta}(x_2-\ii a)F_4(x_1+\ii a, x_2-\ii a) \, \dd x_1\,\dd x_2
+\cal E_1\,.
\end{multline}
 Here we abbreviate
\begin{multline*}
F_4(z_1, z_2)
=-\frac{2}{N^2(z_1-z_2)^2}+\frac{f_1(z_1,z_2)}{N^3(z_1-z_2)^3}+\frac{12}{N^4(z_1-z_2)^4\kappa^2}
+\frac{1}{N^2}\Big(f_{2+}(z_1,z_2)\\+f_3(z_1,z_2)\sum_{i,j}\cal C_4(H_{ij})+f_4(z_1,z_2)\sum_{i}\cal C_3(H_{ii})\Big)
+\frac{\ii }{N^3(z_1-z_2)^2}\Big(-\frac{E}{\kappa^{3}}+V(E)\sum_{i,j}\cal C_4(H_{ij})\Big)
\end{multline*}
for all $(z_1,z_2) \in A\times B \deq \{z \in \bb C\col |\re z|\le 2-\delta, \im z \ge 0\}\times \{z \in \bb C\col |\re z|\le 2-\delta, \im z \le 0\}$, where $\delta$ is any fixed positive number, and we recall the definition of $f_1,...,f_4$ and $V$ from \eqref{488} -- \eqref{491}. 

Now we define $D_a^1\eqd \{z \in \bb C\col 0 \le \im z\le a\}$ and $D_a^2 \eqd \{z \in \bb C\col -a \le \im z\le 0\}$, applying Green's Theorem twice
gives
\begin{align} \label{gaoqi}
&-\frac{1}{4\pi^2}
\int_{+\infty}^{-\infty}\int_{-\infty}^{+\infty}\tilde f_{\eta}(x_1+\ii a)\tilde g_{\eta}(x_2-\ii a)F_4(x_1+\ii a, x_2-\ii a) \, \dd x_1\,\dd x_2 \notag \\
=&{}-\frac{1}{4\pi^2}
\int_{-\infty}^{+\infty}\int_{+\infty}^{-\infty} f_{\eta}(x_1) g_{\eta}(x_2)F_4(x_1, x_2) \, \dd x_1\,\dd x_2
\notag \\
&+\frac{1}{4\pi^2}
\int_{-\infty}^{+\infty}\int_{D_a^1} \partial_{\bar{z}}\tilde f_{\eta}(z_1)  g_{\eta}(x_2)F_4(z_1, x_2) \, \dd^2 z_1\,\dd x_2\notag \\
&-\frac{1}{4\pi^2}\int_{D_a^2}\int_{-\infty}^{+\infty} \tilde f_{\eta}(x_1+\ii a) \partial_{\bar{z}}\tilde g_{\eta}(z_2)F_4(x_1, z_2) \, \dd x_1\,\dd^2 z_2\,,
\end{align}
where we used the fact that $F_4$ is analytic in $A \times B$. Since $F_4 \prec 1$ in $A \times B$,
\begin{equation*}
 \int_{D_a^1} \big|\partial_{\bar z} \tilde f_{\eta}(z_1)\big| \, \dd^2 z_1
= \frac{1}{2} \int_{D_a^1} \big|y_1^kf_{\eta}^{(k+1)}(x_1)\big| \, \dd^2 z_1 \prec \frac{1}{(N\eta)^{(k+1)/2}}=\OO(N^{-3})\,,
\end{equation*}
and similarly $\int_{D_a^2} \big|\partial_{\bar z} \tilde g_{\eta}(z_2)\big|\, \dd z_2 \prec N^{-3}$, then the last two terms in \eqref{gaoqi} are of order $O_{\prec}(N^{-3})$.
Then we have
\begin{equation} \label{518}
\eqref{515}
=\frac{1}{4\pi^2}
\int_{-\infty}^{+\infty}\int_{-\infty}^{+\infty} f_{\eta}(x_1) g_{\eta}(x_2)F_4(x_1, x_2) \, \dd x_1\,\dd x_2+\cal E_1\,.
\end{equation}
Similarly, by Proposition \ref{prop6.1} below one can derive 
\begin{multline} \label{521}
-\frac{1}{4\pi^2}\int_{\{\im z_2=a\}}\int_{\{ \im z_1=a\}}\tilde f_{\eta}(z_1)\tilde g_{\eta}(z_2)\bb E \langle \ul{G}(z_1)\rangle  \langle \ul {G}(z_2)\rangle \, \dd z_1\,\dd z_2\\
=-\frac{1}{4\pi^2}
\int_{-\infty}^{+\infty}\int_{-\infty}^{+\infty} f_{\eta}(x_1) g_{\eta}(x_2)F_5(x_1, x_2) \, \dd x_1\,\dd x_2+\cal E_1\,,
\end{multline}
where 
\begin{equation*}
F_5(x_1,x_2)\deq\frac{1}{N^2}\Big(f_{2-}(x_1^{+},x_2^{+})+f_3(x_1^{+},x_2^{+})\sum_{i,j}\cal C_4(H_{ij})+f_4(x_1^{+},x_2^{+})\sum_i\cal C_3(H_{ii})\Big)\,,
\end{equation*}
and the definitions of $f_{2-},f_3,f_4$ are given in \eqref{488} -- \eqref{f34}.

Plugging \eqref{518}-\eqref{521} and their complex conjugates into \eqref{510} shows
\begin{multline*} 
\int p_{E}(u,v) f_{-}(u)g_{+}(v) \,\mathrm{d}u\,\mathrm{d}v=\frac{1}{N^2}\bb E \big\langle \tr f_{\eta}(H) \big \rangle \big\langle \tr g_{\eta}(H) \big \rangle\\
=\frac{1}{4\pi^2}\int f_{\eta}(x_1)g_{\eta}(x_2) \Bigg(-\frac{4}{N^2(x_1-x_2)^2}+\frac{24}{N^4(x_1-x_2)^4\kappa^2}\\
+\frac{1}{N^2}\bigg(g_1(x_1,x_2)+g_2(x_1,x_2)\sum_{i,j}\cal C_4(H_{ij})
+g_3(x_1,x_2)\sum_i\cal C_3(H_{ii})\bigg)\Bigg) \, \dd x_1 \, \dd x_2
+\cal E_1\,,
\end{multline*}
where $g_1,...,g_3$ are defined as in \eqref{g_1} -- \eqref{g_3}. Changes of variables $u=N\varrho_E(x_1-E)$ and $v=N\varrho_E(x_2-E)$ give
\begin{equation} \label{explain2}
f_{\eta}(x_1)g_{\eta}(x_2)\, \dd x_1 \, \dd x_2=\varrho_E^{-2}f_{-}(u)g_{+}(v)\, \dd u \, \dd v \ \ \ \mbox{and}\ \ \ N(x_1-x_2)=\varrho_E^{-1}(u-v)\,,
\end{equation}
and together with $\kappa=2\pi\varrho_E$ we have the desired result.
\end{proof}

\section{Estimates on the Green function: proof of Proposition \ref{prop4.3}} \label{sec5} 
In this section we prove Proposition \ref{prop4.3} starting from the basic estimate of Lemma \ref{lem:4.2}. 

Recall $\alpha \deq -\log_N \eta \in [0,1-\tau]$. Throughout this section we use the quantity \begin{equation} \label{chi}
\chi\equiv \chi(\alpha)\deq 1/2\min\{\alpha,1-\alpha\}\,.
\end{equation}
Below (e.g.\ \eqref{reduce} and \eqref{442}), we shall see that each time we perform a cumulant expansion, we obtain an improvement of order $N^{-\chi}$. In order to simplify our argument we would like to have $\chi\ge c>0$. For this reason, in the remainder of this section we replace $\alpha$ by $\alpha'\deq (\alpha+1)/2 \in [1/2,1-\tau/2]$, and we have $\chi(\alpha')\geq\tau/4>0$. The replacement may be done without loss of generality, since $b(P)$ and $b_*(P)$ in Proposition \ref{prop4.3} are independent of $\alpha$, and $b_0(P)$ in \eqref{3.14} (as well as $b_1(P)$ in Lemma \ref{lem1} below) is an increasing function of $\alpha$, which allows us to use a larger $\alpha$.

In Sections \ref{sec:4.1} -- \ref{sec4.3} we focus on the case when $H$ is real symmetric, and we shall remark on the complex Hermitian case in Section \ref{sec4.4}.

\subsection{The first step} \label{sec:4.1} In order to illustrate our method, we first prove the following simpler estimate.

\begin{lemma} \label{lem1}
Under the conditions of Theorem \ref{thm_resolvent}, we have the bound
\begin{equation*}
\sum_{i_1,\ldots,i_n}a_{i_1,\ldots,i_n}P_{i_1,\ldots,i_n}  \prec N^{t+n+b_1(P)}
\end{equation*}
where $b_1(P)\deq \alpha\,{\nu}_2(P)-(1-\alpha)\nu_4(P)-(1-\alpha)\nu_5(P)/4$. Moreover, when $\nu_2(P)=\nu_4(P)=\nu_5(P)=0$, the bound is $\OO(N^{t+n})$ instead.
\end{lemma}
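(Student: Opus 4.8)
The only input is the a priori bound of Lemma \ref{lem:4.2}, which already gives $\sum_{i_1,\ldots,i_n}a_{i_1\ldots i_n}P_{i_1\ldots i_n}\prec N^{t+n+b_0(P)}$ with $b_0(P)=b_1(P)+\alpha\,\nu_1(P)$; the task is thus to recover the factor $N^{-\alpha\nu_1(P)}$. Since $\nu_1(P)$ counts precisely the factors $\bb E\ul{A^m}$ of $P$ (with $A\in\cal A$, $m\ge2$), and these are deterministic scalars independent of $i_1,\ldots,i_n$, we may write $P=N^{t}\prod_j\bb E\ul{A_j^{m_j}}\cdot P'$ with $P'=\prod_k\bb E X_k$, $X_k\in\cal M^{(n)}(\cal A)$; then $\nu_1(P')=0$, $\nu_i(P')=\nu_i(P)$ for $i=2,4,5$, so $b_0(P')=b_1(P')=b_1(P)$, and Lemma \ref{lem:4.2} applied to $P'$ gives the claim — provided one knows the scalar estimate $\bb E\ul{A^m}=O(1)$ for every fixed $m\ge1$ and every $A\in\cal A$. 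The ``moreover'' case follows identically from \eqref{nodomination}.

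\textbf{The estimate $\bb E\ul{G^m}=O(1)$.} For this I would run the recursive Schwinger--Dyson hierarchy sketched around \eqref{P3.4}--\eqref{P3.5}. Starting from the resolvent identity $z_1\ul{G^m}=\frac1N\tr(G^mH)-\ul{G^{m-1}}$, applying the cumulant expansion Lemma \ref{lem:cumulant_expansion} to $\frac1N\sum_{i,j}\bb E(G^m)_{ij}H_{ji}$, and using the differentiation rule \eqref{3.15}, one reaches a self-consistent equation of the schematic shape
\[
\bb E\ul{G^m}=T^{-1}\Big(\bb E\ul{G^{m-1}}+\sum_{a=1}^{m}\bb E\langle\ul{G^a}\rangle\langle\ul{G^{m+1-a}}\rangle+\sum_{a=2}^{m-1}\bb E\ul{G^a}\,\bb E\ul{G^{m+1-a}}+\tfrac mN\bb E\ul{G^{m+1}}+\cal W\Big),
\]
where $T=-z_1-2\bb E\ul G$ obeys $|T^{-1}|=O_{\tau}(1)$ by \eqref{2.5} and \eqref{3.4}, $\cal W$ collects the higher-cumulant contributions, and there is also a truncation remainder; the products of centred traces on the right obey their own analogous hierarchy \eqref{P3.5}. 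As in Section \ref{sec4.1}, the terms in $\cal W$ are reduced to the third (or fifth) cumulant term via Lemma \ref{useful}, and the truncation remainder is disposed of, for the truncation order $l$ large enough, by the routine argument of Lemma 4.6(i) in \cite{HK}. Since we may assume $\alpha\in[1/2,1-\tau/2]$, each expansion yields a definite gain $N^{-\chi}$ with $\chi\ge\tau/4>0$.

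\textbf{Finite tree.} It remains to organize these expansions into a finite rooted tree. I would attach to each vertex (a sum $\sum a'_{i_1\ldots i_n}P'_{i_1\ldots i_n}$) the a priori bound of Lemma \ref{lem:4.2}, and stop expanding it once that bound is $O(1)$ (or, on the branches below, once it is $\le N^{-D}$). Inspecting the hierarchy term by term, each child is of one of three types: it has strictly smaller total degree $\sum_j m_j$ — this covers $\bb E\ul{G^{m-1}}$ and $\bb E\ul{G^a}\,\bb E\ul{G^{m+1-a}}$ (both factors of degree $<m$) — so a strong induction on $m$ closes it; or it carries an extra $\tfrac1N$ while its a priori bound grows by at most $N^{\alpha}$, so the branch gains $N^{-(1-\alpha)}\le N^{-\tau/2}$ per step and is $O(N^{-D})$ after boundedly many steps; or it contains one more centred factor, gaining $N^{-\chi}$. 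The single self-referential copy of $\bb E\ul{G^m}$ that reappears after re-expanding the degree-$(m+1)$ term carries a factor $N^{-1}$ and is absorbed into the left-hand side. With the base case $\bb E\ul G=m(z_1)+O((N\eta)^{-1})=O(1)$ from \eqref{3.4}, this gives $\bb E\ul{G^m}=O(1)$ (and likewise for $G^*$, $G(z_2)$, $G^*(z_2)$ by the same argument), completing the reduction of the first paragraph.

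\textbf{Main difficulty.} The delicate point is precisely this combinatorial termination together with the exponent bookkeeping: one must choose the stopping rule and an accompanying complexity measure so that the tree is locally finite of finite depth, and at the same time verify that no expansion step raises the target exponent $t+n+b_1$, so that the leaf bounds propagate back to the root. Concretely this requires tracking how $\nu_1,\nu_2,\nu_4,\nu_5$ and $t$ transform under \eqref{3.15} — in the spirit of \eqref{5.2}--\eqref{5.3} — including the parity/coincidence bookkeeping for the free indices. The higher-cumulant and truncation remainders are, by contrast, handled by the now-standard estimates recalled above.
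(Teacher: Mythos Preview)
Your reduction is exactly the paper's: factor $P=N^t\bigl(\prod_j\bb E\ul{A_j^{m_j}}\bigr)P'$ with $\nu_1(P')=0$, apply Lemma \ref{lem:4.2} to $P'$, and reduce the whole lemma to the scalar estimate $\bb E\ul{A^m}=O_m(1)$ (this is Proposition \ref{lem4.4} in the paper). The tree/hierarchy \eqref{P3.4}--\eqref{P3.5} is also the right mechanism.

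The one place your sketch diverges from the paper --- and where it does not quite close --- is the termination of the tree. You propose strong induction on $m$: declare a child ``Type 1'' when every scalar factor $\bb E\ul{G^k}$ has $k<m$, kill it by the induction hypothesis, and let the remaining branches gain a power of $N$. The problem is that the hierarchy freely manufactures scalar factors of degree $\ge m$. Already one level down the Type-2 branch, expanding $\tfrac{m}{N}\bb E\ul{G^{m+1}}$ produces $\tfrac{m}{NT}\sum_{a=2}^{m}\bb E\ul{G^a}\,\bb E\ul{G^{m+2-a}}$, whose $b_0$-bound is \emph{unchanged} and which for $a\in\{2,m\}$ carries a factor $\bb E\ul{G^m}$; two levels down you meet $\bb E\ul{G^{m+1}}$, and via the $N^{-2}$ terms in \eqref{P3.5} you also pick up $\bb E\ul{G^{\delta_p+\delta_s+1}}$ with $\delta_p+\delta_s+1$ possibly $>m$. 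So ``absorb the single self-referential copy into the left-hand side'' is not enough: you would have to set up and solve a coupled system for $\bb E\ul{G^m},\bb E\ul{G^{m+1}},\ldots,\bb E\ul{G^{m+K}}$ simultaneously, which is doable but is not the clean induction you describe.

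The paper avoids this altogether by \emph{not} using induction on $m$. It proves a dichotomy (Lemma \ref{lem5.3}): every child $P'$ either satisfies $t'+n'+b_0(P')\le t+n+b_0(P)-\chi$, or has $t'+n'+b_0(P')=t+n+b_0(P)$ together with $\nu_3(P')=\nu_3(P)+1$. The second alternative is precisely your ``Type 1'' product terms. The termination then comes from a separate uniform bound $\nu_3(v)\le d\deq\lceil m/(1-\alpha)\rceil$ for every non-leaf vertex: since each vertex also obeys $t+n+\nu_1+\nu_2\le m-1$, Lemma \ref{lem:4.2} gives $v\prec N^{m-1-(1-\alpha)(\nu_1+\nu_2)}$, so $\nu_1+\nu_2>d$ would already force $v=O(1)$, hence $\nu_3\le\nu_1+\nu_2\le d$. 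Consequently the non-gaining alternative can occur at most $d$ times in a row, so along any branch $b_0$ drops by at least $\chi$ every $d+1$ steps, and the tree has depth $\le\lceil(d+1)(\alpha(m-1)+1)/\chi\rceil$. This is the ``complexity measure'' you anticipate in your last paragraph; the missing ingredient is that the right measure is $\nu_3$, not the degree $m$.
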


The above is a first improvement of Lemma \ref{lem:4.2}. Comparing to Lemma \ref{lem:4.2}, Lemma \ref{lem1} roughly states that in the estimates of $P$, we can bound the term $\bb E \ul{A^m}$ by $\OO(1)$ instead of $\OO_{\prec}(N^{\alpha(m-1)})$ for all $A \in \{G,G^*,F,F^*\}$. This motivates us to prove the following result, which trivially fills the gap between Lemmas \ref{lem:4.2} and \ref{lem1}. Indeed, Lemma \ref{lem:4.2} and Proposition \ref{lem4.4} immediately imply Lemma \ref{lem1}, by the previous observation.

\begin{proposition} \label{lem4.4}
Under the conditions of Theorem \ref{thm_resolvent}, we have
\begin{equation*}
\bb E \ul{A^m}=\OO_{m}(1)
\end{equation*}
for $A \in \{G,G^*,F,F^*\}$ and fixed $m \in \bb N_{+}$. 
\end{proposition}

The rest of Section \ref{sec:4.1} is devoted in proving Proposition \ref{lem4.4}. It suffices to consider the case $A=G$. Note that $\bb E\ul{G^m} \in \cal P(\{G\})$, and we would like to use lemma \ref{lem:cumulant_expansion} to get a Schwinger-Dyson equation for terms in $\cal P(\{G\})$.  

\begin{lemma} \label{lem5.3}
Let $P \in P^{(n,t)}(\{G\})$ for some $n \in \bb N$ and $t \in \bb R$. Let $(a_{i_1,\ldots,i_n})_{i_1,\ldots,i_n}$ be a family of complex numbers that is uniformly bounded in $i_1,\ldots,i_n$. Consider the term
\begin{equation}  \label{5.4}
\sum_{i_1,\ldots\,i_{n}} a_{i_1,\ldots,i_{n}}P_{i_1,\ldots,i_{n}} \,.
\end{equation} 
Suppose $\nu_1(P)+\nu_2(P)\ge 1$, then for any $D>0$, \eqref{5.4} equals to a finite (depends on $D$) sum of terms in the form 
\begin{equation} \label{5.5}
\sum_{i_1,\ldots\,i_{n'}} a'_{i_1,\ldots,i_{n'}}P'_{i_1,\ldots,i_{n'}}
\end{equation}
with an error $\OO(N^{-D})$, where $P'\in \cal P^{(n',t')}(\{G\})$, and $a'_{i_1,\ldots,i_{n'}}$ is a family of complex number uniformly bounded in $i_1,\ldots,i_{n'}$. For each $P'$ appears in the sum, we either have
\begin{equation}  \label{reduce}
t'+n'+b_0(P') \le t+n+b_0(P)-\chi\,,
\end{equation}	
or
\begin{equation}  \label{5.7}
t'+n'+b_0(P')= t+n+b_0(P)\mbox{ and }\ \nu_3(P')= \nu_3(P)+1\,.
\end{equation}
Moreover, each $P'$ satisfies 
\begin{equation} \label{4.10}
t'+n'+\nu_1(P')+\nu_2(P')\le t+n+\nu_1(P)+\nu_2(P)\ \mbox{ and }\ t'+n' \le t+n\,.
\end{equation}
\end{lemma}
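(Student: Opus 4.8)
The plan is to prove Lemma~\ref{lem5.3} by a single application of the cumulant expansion formula \eqref{eq:cumulant_expansion}, carefully tracking how each of the quantities $n$, $t$, $\nu_1,\dots,\nu_6$ changes. Since $\nu_1(P)+\nu_2(P)\ge 1$, the monomial $P$ contains at least one factor of the form $\ul{G^m}$, $\langle\ul{G^m}\rangle$, or $(G^m)_{xy}$ with $m\ge 2$, or at least one off-diagonal factor; in all cases we may use the resolvent identity $z_1 G^m = G^{m-1}(GH - I) = G^{m-1}HG\cdot(\text{shift})- G^{m-1}$ to write one Green-function power in terms of $H$ times Green functions, opening up an entry $H_{ij}$ on which to expand. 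Concretely, I would single out one such factor, write $\ul{G^m}=N^{-1}\sum_{k}(G^m)_{kk}$ (or similarly for the other types), peel off one power $G^{m}_{\,\cdot\,\cdot} = (G^{m-1}(GH-I)z_1^{-1})_{\,\cdot\,\cdot} + z_1^{-1}(G^{m-1})_{\,\cdot\,\cdot}$, and apply \eqref{eq:cumulant_expansion} in the variable $h=H_{ij}$ to the resulting expression $\E[H_{ij} f(H)]$ where $f$ collects the remaining factors of $P$ and the new Green functions. The $k=1$ term produces the ``derivative'' terms, and the higher-cumulant terms $k\ge 2$ are handled exactly as in Section~\ref{sec4.1}: they are suppressed by additional factors of $N^{-1/2}$ from Lemma~\ref{lem} and controlled via Lemma~\ref{useful}-type bookkeeping; truncating at $k=l$ for $l=l(D)$ large and using the remainder estimate in \eqref{eq:cumulant_expansion} gives the $O(N^{-D})$ error.

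The core of the argument is then the combinatorial bookkeeping: I must verify that every monomial $P'$ produced satisfies either \eqref{reduce} or \eqref{5.7}, together with \eqref{4.10}. The differentiation rule \eqref{3.15} shows that each application of $\partial_{ij}$ to a factor $(G^a)_{xy}$ splits it into two factors whose total power-minus-one is $a-1$, plus it introduces two new indices; applied to $\langle\ul{G^a}\rangle$ it produces $N^{-1}\sum (G^{a_1})_{xy}(G^{a_2})_{yx}$-type terms (again with $a_1+a_2 = a+1$, so $(a_1-1)+(a_2-1)=a-1$); applied to $\ul{G^a}$ similarly. One checks: (i) $n$ can only increase, and when it does $t$ decreases correspondingly or one gains suppression; (ii) the combination $t+n+\nu_1+\nu_2$ is nonincreasing (the $H_{ij}$-peeling converts one unit of $\nu_1$ or $\nu_2$ into structure that is at worst neutral, while $t$ and $n$ track the $N^{-1}$ and $\sum_k$ factors) — this is \eqref{4.10}; (iii) the index $H_{ij}$ introduced appears either twice (contributing to $\nu_6$ but possibly not $\nu_5$) or is identified with an existing index, and the resolvent-identity step creates off-diagonal factors $(G^{m-1})_{\,\cdot\, i}$ etc.\ which, after summing over the repeated index, yield the $N^{-(1-\alpha)/4}$ or $N^{-(1-\alpha)/2}$ gains encoded in $b_0$. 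Summing these contributions, one shows $t'+n'+b_0(P')\le t+n+b_0(P)$ always, with equality forcing the peeled $z_1^{-1}(G^{m-1})$ branch and nothing else moving — which is precisely the case $\nu_3(P')=\nu_3(P)+1$ in \eqref{5.7}, while all genuinely new index sums or new $\langle\cdot\rangle$ factors drop the bound by at least $\chi=\tfrac12\min\{\alpha,1-\alpha\}$, giving \eqref{reduce}.

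I would organize the case analysis according to which type of factor is selected for expansion: (a) a diagonal power $\ul{G^m}$ or $(G^m)_{xx}$ with $m\ge 2$ contributing to $\nu_1$ or $\nu_2$; (b) a centred power $\langle\ul{G^m}\rangle$ contributing to $\nu_4$; (c) an off-diagonal entry $(G^m)_{xy}$, $x\ne y$. In each case the resolvent identity lowers the power by one before expansion, so $\nu_1$ or $\nu_2$ strictly decreases at that factor; the newly created factors from \eqref{3.15} and from the subsequent $\partial_{ij}^k$ must be inventoried against the six $\nu$'s. The delicate point — and the one I expect to be the main obstacle — is the ``exactly equal'' scenario \eqref{5.7}: one must rule out that a term preserving $t+n+b_0$ could also create a new $\langle\cdot\rangle$ factor or a new odd-index, which would contradict the dichotomy. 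This requires checking that the only way to keep $b_0$ unchanged is the trivial $z_1^{-1}G^{m-1}$ branch with the $k=1$ derivative hitting nothing new, i.e.\ the derivative $\partial_{ij}$ acts on the factor we just created so that the two halves recombine into a single diagonal power of higher multiplicity — this is where the identity $\nu_3(P')=\nu_3(P)+1$ comes from, and verifying that no other configuration is neutral for $b_0$ is the heart of the proof.

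Once Lemma~\ref{lem5.3} is established, Proposition~\ref{lem4.4} follows by iterating: starting from $\bb E\ul{G^m}$ (which has $t=n=0$, $\nu_1=m-1$, all other $\nu$'s zero, so $t+n+b_0=0$), repeatedly apply Lemma~\ref{lem5.3}; the quantity $t+n+b_0$ never increases and strictly drops by $\chi\ge\tau/4$ except along branches where $\nu_3$ increases, but $\nu_3$ is bounded above by $t+n+\nu_1+\nu_2$ (which is itself nonincreasing by \eqref{4.10} and finite at the start), so after finitely many steps every branch terminates in a monomial with $\nu_1=\nu_2=0$; for such a monomial Lemma~\ref{lem:4.2} (the ``moreover'' part, \eqref{nodomination}) gives the deterministic bound $O(N^{t+n})=O(1)$ since $t+n\le 0$ throughout. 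Assembling the finite tree of terms with its $O(N^{-D})$ accumulated error yields $\bb E\ul{G^m}=O(1)$, as claimed.
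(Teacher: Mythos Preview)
Your overall architecture matches the paper: isolate a factor with a power $\ge 2$, apply the resolvent identity $z_1 G = HG - I$, expand with Lemma~\ref{lem:cumulant_expansion}, and track the $\nu_i$'s. The paper organizes this into three cases according to whether the high-power factor is (1) a bare $\bb E\,\ul{G^m}$, (2) inside some $\bb E Q_{r,s}$ with $s\ge 1$, or (3) inside $\bb E Q_r$; in each case they write out the Schwinger--Dyson equation explicitly (equations \eqref{Gm}, \eqref{Qrs}, \eqref{Qr}) and then classify each resulting term.

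However, your identification of the equality case \eqref{5.7} is wrong, and since you yourself call this ``the heart of the proof'', this is a real gap. You claim the neutral-for-$b_0$ scenario is ``the trivial $z_1^{-1}G^{m-1}$ branch'' where the two halves ``recombine into a single diagonal power of higher multiplicity''. That branch actually satisfies \eqref{reduce}: replacing $\ul{G^m}$ by $T^{-1}\ul{G^{m-1}}$ drops $\nu_1$ by one and hence $b_0$ by $\alpha\ge\chi$, while $\nu_3$ does \emph{not} increase. The genuine \eqref{5.7} terms are the \emph{splitting} terms that arise from the second-cumulant contribution $-N^{-2}\sum_{i,j}(G^a)_{ii}(G^{m+1-a})_{jj}$, namely $T^{-1}\bb E\,\ul{G^a}\,\bb E\,\ul{G^{m+1-a}}$ for $2\le a\le m-1$ (and their analogues $\bb E Q_{r,s-1}\langle\ul{G^{\delta_s-a}}\rangle\,\bb E\,\ul{G^{a+1}}$ and $N^{-2}\bb E Q_{r,s-2}^{(p)}\,\bb E\,\ul{G^{\delta_p+\delta_s+1}}$ in Cases~2--3). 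For these, $(a-1)+(m-a)=m-1$ so $\nu_1$ is preserved, $t,n,\nu_2,\nu_4,\nu_5$ are unchanged, hence $b_0$ is exactly preserved, while one $\ul{G^m}$ has become two factors $\ul{G^a},\ul{G^{m+1-a}}$ each of degree $\ge 2$, so $\nu_3$ goes up by one. Until you correctly locate these splitting terms and verify they are the \emph{only} ones preserving $t+n+b_0$, the dichotomy \eqref{reduce}/\eqref{5.7} is unproven.

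A smaller point on the iteration to Proposition~\ref{lem4.4}: your claimed bound $\nu_3\le t+n+\nu_1+\nu_2$ is not correct (since $t+n$ can be negative while $\nu_3\le\nu_1$ could exceed $m-1$). The paper instead argues that any vertex with $\nu_1+\nu_2>d\deq\lceil m/(1-\alpha)\rceil$ is already $\prec N^{-1}$ by Lemma~\ref{lem:4.2} and so is a leaf; for surviving vertices one then has $\nu_3\le\nu_1+\nu_2\le d$, which bounds the number of consecutive \eqref{5.7} steps.
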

\begin{proof} Note that $P \in \cal P(\{G\})$ is a monomial in the variables $\bb E X$, where $\bb E X$ is either $\bb E \ul{G^m}$, or
\begin{equation} \label{QG}
\bb E Q_{r,s} \deq \bb E (G^{\sigma_1})_{x_1  y_1}\cdots(G^{\sigma_r})_{x_r y_r}\langle \ul{G^{\delta_1}}\rangle \cdots \langle \ul{G^{\delta_s}}\rangle
\end{equation} 
for $(r,s) \in \bb N^2\backslash\{(0,0)\}$, and $\sigma_1,\ldots,\sigma_r,\delta_1,\ldots,\delta_s \in \bb N_{+}$. Since $\nu_1(P) +\nu_2(P) \ge 1$, in $P$ there is either a factor $\bb E \ul{G^m}$ for $m \ge 2$, or $\bb E Q_{r,s}$ for $\max\{\sigma_1,\dots,\sigma_r,\delta_1,\ldots,\delta_s\}\ge 2$. 

\emph{Case 1.} Suppose there is a factor $\bb E \ul{G^m}$ in $P$ for some $m \ge 2$.	We have
\begin{equation} \label{4.444}
\sum_{i_1,\ldots\,i_{n}} a_{i_1,\ldots,i_{n}}P_{i_1,\ldots,i_{n}}=\sum_{i_1,\ldots\,i_{n}} a_{i_1,\ldots,i_{n}}\cdot P_{i_1,\ldots,i_{n}}/\bb E\ul{G^m} \cdot\bb E \ul{G^m}
\end{equation}
It is easy to see that by the trivial bound $\norm{G}\le \eta^{-1}=N^{\alpha}$, we have
\begin{equation*}
\sum_{i_1,\ldots\,i_{n}} \abs{a_{i_1,\ldots,i_{n}}P_{i_1,\ldots,i_{n}}/\bb E\ul{G^m}}=\OO(N^{D'})
\end{equation*}
for some fixed $D' \equiv D'(t,P, m)>0$. By the resolvent identity
\begin{equation*}
z_1\bb E \ul{G^m}=-\bb E \ul{G^{m-1}}+\frac{1}{N}\sum_{i,j}\bb E {(G^m)_{ij}H_{ji}}
\end{equation*}
and Lemma \ref{lem:cumulant_expansion}, we see that for some fixed integer $l \in \bb N$, 
$$
z\bb E \ul{G^m}=-\bb E \ul{G^{m-1}}+\frac{1}{N^2}\sum_{a=1}^{m}\sum_{i,j}\big(-\bb E(G^a)_{ii}(G^{m+1-a})_{jj}-\bb E(G^a)_{ij}(G^{m+1-a})_{ij}\big)+\sum_{k=2}^{l}X_{k}+\OO(N^{-D-D'})\,,
$$
where 
\begin{equation} \label{Xk}
X_k\deq \frac{1}{N} \sum_{i,j}\frac{1}{k!}\cal C_{k+1}(H_{ji}) \bb E \frac{\partial^k (G^m)_{ij}}{\partial H^k_{ji}}\,,
\end{equation}
Note that for $a \in \{1,2,...,m\}$, we have
\begin{align*}
\sum_{i,j}\big(-\bb E(G^a)_{ii}(G^{m+1-a})_{jj}-\bb E(G^a)_{ij}(G^{m+1-a})_{ij}\big)=-\bb E \langle \ul{G^a}\rangle \langle\ul{G^{m+1-a}}\rangle-\bb E \ul{G^a}\,\bb E\ul{G^{m+1-a}}-\bb E \ul{G^{m+1}}\,,
\end{align*}
thus
\begin{equation} \label{Gm}
\bb E \ul{G^m}=\frac{1}{T}\bigg(\bb E \ul{G^{m-1}}+\sum_{a=1}^{m}\bb E \langle \ul{G^a}\rangle \langle\ul{G^{m+1-a}}\rangle+\sum_{a=2}^{m-1}\bb E \ul{G^a}\,\bb E\ul{G^{m+1-a}}+\frac{m}{N}\,\bb E \ul{G^{m+1}}-\sum_{k=2}^{L}X_{k}\bigg)+\OO(N^{-D-D'})\,,
\end{equation}
where we recall from Section \ref{sec4.1} that $T\deq -z_1-2\bb E \ul{G}$. 
By plugging \eqref{Gm} into \eqref{4.444}, we rewrite the left-hand side of \eqref{4.444} into a finite sum of terms in the form \eqref{5.5} with an error $\OO(N^{-D})$.
We see that for $2 \le a \le m-1$, the term $P/\bb E\ul{G^m}\cdot T^{-1}\bb E \ul{G^a}\,\bb E\ul{G^{m+1-a}}$ satisfies \eqref{5.7}. Moreover, we see that all other second cumulant terms we get satisfy \eqref{reduce}, and this leaves us with the estimate of higher cumulant terms. By Lemma \ref{lem} and the differential rule \eqref{3.15} we see that for $k \ge 2$, $T^{-1}X_k$ is a finite sum of the terms in the form
\begin{equation} \label{4.21}
\sum_{i,j}a_{i,j}P'_{i,j}\,,
\end{equation}
where $P' \in \cal P^{(2,-(k+3)/2)}(\cal \{G\})$. By \eqref{4.21} and exploring the differential in \eqref{Xk} carefully, we see that for $2 \le k \le 4$, 
\begin{equation*} 
\sum_{i_1,\ldots\,i_{n}} a_{i_1,\ldots,i_{n}}\cdot P_{i_1,\ldots,i_{n}}/\bb E\ul{G^m} \cdot T^{-1}X_k
\end{equation*}
is a finite sum of terms in the form \eqref{5.5}, and each of them satisfies \eqref{reduce}. For $k\ge 5$, we would like to apply Lemma \ref{useful} to compare it to the case $k=3$. More precisely, for $k \ge 5$, let 
\begin{equation} \label{5.24}
\sum_{i_1,\ldots\,i_{n'}} a'_{i_1,\ldots,i_{n'}}P'_{i_1,\ldots,i_{n'}}
\end{equation}
be a term in $\sum_{i_1,\ldots\,i_{n}} a_{i_1,\ldots,i_{n}}\cdot P_{i_1,\ldots,i_{n}}/\bb E\ul{G^m} \cdot T^{-1}X_k$. By the definition of $X_k$ in \eqref{Xk}, we see that there is a term $ \sum_{i,j} \tilde{a}_{ij} \bb E X$ in $T^{-1} X_3$ for some $X \in \cal M^2(\cal A)$ such that \eqref{5.24} appears in the sum 
\begin{equation*} 
\sum_{i_1,\ldots\,i_{n},i,j} a_{i_1,\ldots,i_{n}}\cdot P_{i_1,\ldots,i_{n}}/\bb E\ul{G^m} \cdot  \frac{3!\,C_{k+1}(H_{ij})}{k!\,C_4(H_{ij})} \,\tilde{a}_{ij}\, \bb E \frac{\partial^{k-3}X}{\partial H_{ij}^{k-3}}.
\end{equation*}
Now we compare the above and
\begin{equation*}
\sum_{i_1,\ldots\,i_{n},i,j} a_{i_1,\ldots,i_{n}}\cdot P_{i_1,\ldots,i_{n}}/\bb E\ul{G^m} \cdot \tilde{a}_{ij} \bb E X \eqd  \sum_{i_1,\ldots\,i_{n},i,j} a_{i_1,\ldots,i_{n}}\cdot \tilde{a}_{ij} \cdot \tilde{P}_{i_1,...,i_n,i,j}
\end{equation*}
using Lemma \ref{useful}. By Lemma \ref{lem}(i)  and $k \ge 5$ we see that $t'\leq t-1$. By setting $\lambda=\alpha$ in \eqref{5.3} and using \eqref{5.2}, we see that $b_0(P') \leq b_0(\tilde{P})+(1-\alpha)$. Thus $t'+b_0(P') \le \tilde{t}+ b_0(\tilde{P})$. By observing $n'=\tilde{n}=n+2$ and using our result of $\tilde{P}$, we see that $t'+n'+b_0(P') \le \tilde{t}+ \tilde{n}+b_0(\tilde{P}) \le t+n+b_0(P)-\chi$, which means $P'$ satisfies \eqref{reduce}. Thus we have shown that each $P'$ satisfies either \eqref{reduce} or \eqref{5.7}.

Similarly, one can show that, when plugging \eqref{Gm} into \eqref{4.444}, all the terms we get satisfy \eqref{4.10}. 

\emph{Case 2.} Suppose there is a factor $\bb E Q_{r,s}$ in $P$ for some $s \ge 1$. We have
\begin{equation} \label{4.4444}
\sum_{i_1,\ldots\,i_{n}} a_{i_1,\ldots,i_{n}}P_{i_1,\ldots,i_{n}}=\sum_{i_1,\ldots\,i_{n}} a_{i_1,\ldots,i_{n}}\cdot P_{i_1,\ldots,i_{n}}/\bb EQ_{r,s} \cdot\bb EQ_{r,s}\,,
\end{equation}
and
\begin{equation*}
\sum_{i_1,\ldots\,i_{n}} \abs{a_{i_1,\ldots,i_{n}}P_{i_1,\ldots,i_{n}}/\bb E Q_{r,s}}=\OO(N^{D'})
\end{equation*}
for some fixed $D'\deq D'(P,\bb E Q_{r,s})>0$. Let us abbreviate $Q_{r,s-1} \deq Q_{r,s}/\langle \ul{G^{\delta_s}}\rangle$, $Q_{r,s-2}^{(p)} \deq Q_{r,s-1}/\langle \ul{G^{\delta_p}}\rangle$ for $1\le p \le s-1$, and  $Q_{r-1,s-1}^{(q)} \deq Q_{r,s-1}/(G^{\sigma_q})_{x_q y_q}$ for $1 \le q \le r$. We have
\begin{equation} \label{4.17}
z_1\bb E Q_{r,s}=-\bb E Q_{r,s-1} \langle \ul{G^{\delta_s-1}} \rangle +\bb E \langle Q_{r,s-1} \rangle \ul{G^{\delta_s}H}=-\bb E Q_{r,s-1} \langle \ul{G^{\delta_s-1}} \rangle+\frac{1}{N}\sum_{i,j}\bb E \langle Q_{r,s-1} \rangle (G^{\delta_s})_{ij}H_{ji}\,,
\end{equation}
Analogously to \eqref{4.11}, by calculating the last sum in \eqref{4.17} using formula \eqref{eq:cumulant_expansion}, we have
\begin{equation} \label{Qrs}
\begin{aligned}
\bb E Q_{r,s}&=\frac{1}{T} \bigg(\bb E Q_{r,s-1} \langle \ul{G^{\delta_s-1}} \rangle+
\sum_{a=0}^{\delta_s-1} \bb E Q_{r,s-1} \langle \ul{G^{\delta_s-a}} \rangle \langle \ul{G^{a+1}} \rangle-\sum_{a=0}^{\delta_s-1} \bb E Q_{r,s-1} \bb E \langle \ul{G^{\delta_s-a}} \rangle \langle \ul{G^{a+1}} \rangle\\
&\ \ +\frac{\delta_s}{N}\bb E Q_{r,s-1} \langle \ul{G^{\delta_s+1}} \rangle + 2\sum_{a=1}^{\delta_s-1}Q_{r,s-1}\langle \ul{G^{\delta_s-a}}\rangle \bb E \ul{G^{a+1}}+\frac{2}{N^2}\sum_{p=1}^{s-1}\delta_p\bb E Q_{r,s-2}^{(p)}\bb E \ul{G^{\delta_p+\delta_s+1}}\\
&\ \ +\frac{2}{N^2}\sum_{p=1}^{s-1}\delta_p\bb E Q_{r,s-2}^{(p)}\langle \ul{G^{\delta_p+\delta_s+1}}\rangle+\frac{2}{N^2}\sum_{q=1}^{r}\sigma_q\bb E Q_{r-1,s-1}^{(q)}G_{x_q y_q}^{\sigma_q+\delta_s+1}-\sum_{k=2}^lY_k\bigg)+\OO(N^{-D-D'})\,,
\end{aligned}
\end{equation}
where
\begin{equation} \label{yk}
Y_k\deq \frac{1}{N} \sum_{i,j}\frac{1}{k!}\cal C_{k+1}(H_{ji}) \bb E \partial^k_{ij}(( Q_{r,s-1}-\bb E Q_{r,s-1})  (G^{\delta_s})_{ij})\,,
\end{equation} 
$T=-z_1-2\bb E \ul{G}$, and $l$ is some fixed integer. By plugging \eqref{Qrs} into \eqref{4.4444}, we rewrite the left-hand side of \eqref{4.4444} into a finite sum of terms in the form \eqref{5.5} with an error $\OO(N^{-D})$.
We see that for $1\le a\le \delta_s-1$ and $1 \le p \le s-1$,
\begin{equation*}
\sum_{i_1,\ldots\,i_{n}} a_{i_1,\ldots,i_{n}}\cdot P_{i_1,\ldots,i_{n}}/\bb EQ_{r,s} \cdot T^{-1}\bb E Q_{r,s-1}\langle\ul{G^{\delta_s-a}}\rangle \bb E\ul{G^{a+1}}
\end{equation*} 
and
\begin{equation*}
\sum_{i_1,\ldots\,i_{n}} a_{i_1,\ldots,i_{n}}\cdot P_{i_1,\ldots,i_{n}}/\bb EQ_{r,s} \cdot T^{-1}N^{-2}\delta_p\bb E Q_{r,s-2}^{(p)}\bb E \ul{G^{\delta_p+\delta_s+1}}
\end{equation*}
satisfy \eqref{5.7}.  Moreover, we see that all other second cumulant terms we get satisfy \eqref{reduce}, and this leaves us with the estimate of higher cumulant terms. By exploring the differential in \eqref{yk} carefully and using Lemma \ref{useful}, we see that for $2 \le k \le 3$, 
\begin{equation} \label{532}
\sum_{i_1,\ldots\,i_{n}} a_{i_1,\ldots,i_{n}}\cdot P_{i_1,\ldots,i_{n}}/\bb EQ_{r,s} \cdot T^{-1}Y_k
\end{equation}
is a finite sum of terms in the form \eqref{5.5}, and each of them satisfies \eqref{reduce}. Note that for $k=3$, each $P'$ in \eqref{532}  has two $i$ and $j$ as summation indices. For $k \ge 4$, we can apply Lemma \ref{useful} to compare it to the case $k=3$, and similar as in Case 1, we can show that $\sum_{i_1,\ldots\,i_{n}} a_{i_1,\ldots,i_{n}}\cdot P_{i_1,\ldots,i_{n}}/\bb EQ_{r,s} \cdot T^{-1}Y_k$
is a finite sum of terms in the form \eqref{5.5}, and each of them satisfies \eqref{reduce}. Similarly, one can show that all the terms we get satisfy \eqref{4.10}. This completes the proof for $s \ge 1$.

\emph{Case 3.} To deal with the special case $s=0$, we denote
\begin{equation} \label{4.23}
\bb EQ_r \deq \bb EQ_{r,0} = \bb E (G^{\sigma_1})_{x_1 y_1}\cdots(G^{\sigma_r})_{x_r y_r}\,.
\end{equation}
Suppose there is a term $\bb E Q_r$ in $P$, where $\max\{\sigma_1,\ldots,\sigma_r\} \ge 2$, and assume $\sigma_r=\max \sigma_q$. We have
\begin{equation*} 
\sum_{i_1,\ldots\,i_{n}} a_{i_1,\ldots,i_{n}}P_{i_1,\ldots,i_{n}}=\sum_{i_1,\ldots\,i_{n}} a_{i_1,\ldots,i_{n}}\cdot P_{i_1,\ldots,i_{n}}/\bb EQ_{r} \cdot\bb EQ_{r}\,,
\end{equation*}
and
\begin{equation*}
\sum_{i_1,\ldots\,i_{n}} \abs{a_{i_1,\ldots,i_{n}}P_{i_1,\ldots,i_{n}}/ Q_{r}}=\OO(N^{D'})
\end{equation*}
for some fixed $D'\deq D'(P, Q_{r})>0$. Let us denote $Q_{r-1}\deq Q_r/(G^{\sigma_r})_{x_r y_r}$, and $Q_{r-1}^{(q)}\deq Q_{r-1}/(G^{\sigma_q})_{x_q y_q}$ for $1\le q\le r-1$. As in Case 2, by formula \eqref{eq:cumulant_expansion} we have for some fixed integer $l$ that 
\begin{equation} \label{Qr}
\begin{aligned}
\bb E Q_{r}&=\frac{1}{U}\bigg(\bb E Q_{r-1}(G^{\sigma_r-1})_{x_r y_r}+\sum_{a=0}^{\sigma_r-1}\bb E Q_{r-1}(G^{\sigma_r-p})_{x_r y_r}\langle \ul{G^{a+1}} \rangle +\sum_{a=1}^{\sigma_r-1}\bb E Q_{r-1}(G^{\sigma_r-p})_{ x_r y_r}\bb E \ul{G^{a+1}}  \\
&\ \  +\frac{1}{N}\sum_{q=1}^{r-1}(\sigma_r+1)\sum_{a=0}^{\sigma_q-1}\bb E Q_{r-1}^{(q)}\Big((G^{\sigma_q-a})_{ x_q  y _r}(G^{\sigma_r+a+1})_{ x_r y_q}+(G^{\sigma_q-a})_{x_q x_r}(G^{\sigma_r+a+1})_{y_r y_q}\Big)\\
&\ \ +\frac{\sigma_r+1}{N} \bb E Q_{r-1}(G^{\sigma_r+1})_{x_r y_r}-\sum_{k=2}^{l}Z_k\bigg)+\OO(N^{-D-D'})\,,
\end{aligned}
\end{equation}
where $U=-z_1-\bb E\ul{G}$, and
\begin{equation*} 
Z_k\deq \frac{1}{N} \sum_{i,j}\frac{1}{k!}\cal C_{k+1}(H_{ji}) \bb E \partial^k_{ij}( Q_{r-1}  (G^{\sigma_r})_{ij})\,.
\end{equation*}
One can then proceed the proof similarly as in Case 2. We omit the details.
\end{proof}

Equipping with Lemma \ref{lem5.3}, we are now ready to prove Proposition \ref{lem4.4}. It suffices to show the case where $A=G$ and $m \geq 2$.

The proof relies on a tree structure that we now introduce. As in Section \ref{sec3}, when dealing with polynomials in the Green function, we distinguish between formal polynomials, which are algebraic expressions, and their values, which are expectations of random variables. The tree has a vertex set $\bb V$. Every vertex $v \in \bb V$ is labelled by a formal expression of the form
 \[
 \sum_{i_1,\ldots, i_n}a^{(v)}_{i_1,...,i_n}P^{(v)}_{i_1,...,i_n}\,,
 \] 
 where $P^{(v)} \in \cal P^{(n,t)}(\{G\}) \subset \cal P(\{G\})$ for some $n=n(v) \in \bb N$, $t=t(v) \in \bb R$, and $(a^{(v)}_{i_1\ldots i_n})_{1\le i_1,\ldots,i_n\le N}$ is a family of complex numbers that is uniformly bounded in $i_1,\ldots,i_n$. We denote $b_0(v)\deq b_0(P^{(v)})$, and $\nu_i(v)\deq \nu_i(P^{(v)})$ for $i=1,2,...,6$. Each vertex $v$ has a \emph{value}, a deterministic number obtained by formally evaluating all expectations and products in its definition. By a slight abuse of notation, as in Section \ref{sec3}, we use the same notation both for the formal monomial and its value.
 
The \emph{root} $r \in \bb V$ of the tree is given by
\begin{equation*}
r\deq \bb E \ul{G^m}\,,
\end{equation*} 
where $\bb E \ul{G^m} \in \cal P^{(n,t)}(\{G\})$ for $(n,t)=(0,0)$. The first step is to show that for each vertex $v$, we can assign a finite number of its \textit{descendants}, a set of vertices denoted by $\cal D(v)$, such that
\begin{equation} \label{yahaha}
v = \sum_{u \in \cal D(v)} u +\OO(1)\,.
\end{equation}
Here $|D(v)|\leq C(v)<+\infty$. Every vertex $v \in \bb V$ has a \emph{generation} $k(v)$, which is defined recursively by $k(r) = 0$ and $k(u) = k(v) + 1$ for all $u \in \cal D(v)$. In a second step, we show that $\nu_3(v)\leq d$ for some fixed $d(m,\alpha)<\infty$. In a third step, we use the result in step 2 to show that this tree has bounded depth, and the proof easily follows from the relation \eqref{yahaha}. The latter step means that after a bounded number of steps we always reach a vertex $v$ that satisfies $v = \OO(1)$. It is declared a leaf and has no descendants.

\emph{Step 1.}
Since $m \ge 2$, we have $\nu_1(r)+\nu_2(r) = m-1\ge 1$. Thus by setting $D=0$ in Lemma \ref{lem5.3}, we can write
\begin{equation} \label{tree}
r= \sum_{u \in \cal D(r)} u +\OO(1)\,,
\end{equation}
where each $u \in \cal D(r)$ lies in $\bb V$, and $|\cal D(r)|= \OO_u(1)<+\infty$. We also observe from Lemma \ref{lem5.3} that for each $u\in \cal D(r)$, 
\begin{equation*}
t(u)+n(u)+\nu_1(u)+\nu_2(u)\le t(r)+ n(r)+\nu_1(r)+\nu_2(r)=m-1\ \mbox{ and }\ t(u)+n(u) \le t(r)+n(r)=0\,.
\end{equation*} 
For a vertex $u\in D(r)$, if $\nu_1(u)=\nu_2(u)=0$, Lemma \ref{lem:4.2} will imply 
$$ 
u=\OO(N^{t(u)+n(u)+\nu_1(u)+\nu_2(u)})=\OO(N^{t(u)+n(u)})=\OO(1)\,,
$$
and we remove $u$ from the tree and classify it in the last term of \eqref{tree}. Thus every $u \in \cal D(r)$ also satisfies
\[
\nu_1(u)+\nu_2(u)\ge 1\,.
\]

In general, if we have a vertex $v$ in the tree satisfying
\begin{equation*} 
t(v)+n(v)+\nu_1(v)+\nu_2(v)\le m-1\,, \ t(v)+n(v) \le 0\,, \mbox{ and } \nu_1(v)+\nu_2(v)\ge 1\,,
\end{equation*}
the last condition enables us to apply Lemma \ref{lem5.3} again and get
\[
v=\sum_{u \in \cal D(v)} u+ \OO(1)\,,
\]
where each $u \in \cal D(v)$ lies in $\bb V$, and $|\cal D(v)|= \OO_u(1)<+\infty$. As in the root case, we have
\begin{equation}  \label{ooo}
t(u)+n(u)+\nu_1(u)+\nu_2(u)\le m-1\,, \ t(u)+n(u) \le 0\,, \mbox{ and } \nu_1(u)+\nu_2(u)\ge 1\,,
\end{equation}
for all $u \in \cal D(v)$.
By continuing this process, we create a locally finite tree with root $r$, where each vertex $u$ satisfies \eqref{ooo}.

\emph{Step 2.} Let $v$ be a vertex in our tree. From the construction we know
\begin{equation*}
t(v)+n(v)+\nu_1(v)+\nu_2(v)\le m-1\,,
\end{equation*}
thus Lemma \ref{lem:4.2} shows
\begin{equation} \label{yio}
v\prec N^{t(v)+n(v)+b_0(v)}\le N^{t(v)+n(v)+\alpha(\nu_1(v)+\nu_2(v))}\le N^{m-1-(1-\alpha)(\nu_1(v)+\nu_2(v))}\,.
\end{equation}
Since $v$ is a vertex of our tree, we must have \[
m-(1-\alpha)(\nu_1(v)+\nu_2(v))\ge0\,,
\] 
otherwise \eqref{yio} implies $v\prec N^{-1}$, which results $v=\OO(1)$. Thus $\nu_1(v)+\nu_2(v) \leq \ceil{m/(1-\alpha)}\eqd d(\alpha,m) <\infty$. Together with the observation $0 \leq \nu_3\leq\nu_1+\nu_2$, we have
\begin{equation} \label{vvv}
0\leq\nu_3(v) \leq d
\end{equation}
for all vertices $v$ in the tree.

\emph{Step 3.} We show the tree has a bounded depth, which easily concludes the proof of Proposition \ref{lem4.4}. Suppose conversely that there is a infinite sequence of vertices $(v_0,v_1,...)$ in the tree, such that $v_0=r$, and $v_{k+1}$ is a descendent of $v_k$ for all $k\in \bb N$. Lemma \ref{lem5.3} (in particular \eqref{reduce} and \eqref{5.7}) shows that for any $v$ and its descendant $u$ in this sequence, we have either
\begin{equation} \label{4.42}
t(u)+n(u)+b_0(u)\le t(v)+n(v)+b_0(v)-\chi\,,
\end{equation} 
or \begin{equation} \label{443}
t(u)+n(u)+b_0(u)= t(v)+n(v)+b_0(v)\ \mbox{ and }\ \nu_3(u)= \nu_3(v)+1\,.
\end{equation}
By \eqref{vvv} we see that in the sequence $(v_0,v_1,...)$, the case \eqref{443} can at most happen $d$ times in a row (otherwise we will have $\nu_3(v) \ge d+1$ for some vertex $v$). This means \eqref{4.42} will happen at least once in $d+1$ generations. Thus \eqref{4.42} and the first relation in \eqref{443} imply
\begin{equation} \label{kkk}
t(v)+n(v)+b_0(v) \leq t(r)+n(r)-1=-1
\end{equation}
whenever $k(v) \ge \ceil{(d+1)(b_0(r)+1)/\chi}=\ceil{(d+1)(\alpha(m-1)+1)/\chi} $, and we can conclude from \eqref{kkk} and Lemma \ref{lem:4.2} that
\begin{equation*}
v\prec N^{t(v)+n(v)+b_0(v)}\le N^{-1}\,,
\end{equation*}
which implies $v=\OO(1)$. Thus this tree has generation at most $\ceil{(d(\alpha,m)+1)(\alpha(m-1)+1)/\chi}$, and this completes the proof.

\subsection{Single matrix case} \label{sec4.2} In this section we prove Proposition \ref{prop4.3}(i) with the aid of Lemma \ref{lem1}. It suffices to show the case when $\cal A =\{G\}$. As in Section \ref{sec:4.1}, we would like to use Lemma \ref{lem:cumulant_expansion} to construct a tree whose root is the left-hand side of \eqref{4.39}. The off-spring process is summarized in the following Lemma. 
\begin{lemma} \label{lem4.6666}
Let $P \in P^{(n,t)}(\{G\})$ for some $n \in \bb N$ and $t \in \bb R$. Let $(a_{i_1,\ldots,i_n})_{i_1,\ldots,i_n}$ be a family of complex numbers that is uniformly bounded in $i_1,\ldots,i_n$. Consider the term
\begin{equation} \label{4.40}
\sum_{i_1,\ldots\,i_{n}} a_{i_1,\ldots,i_{n}}P_{i_1,\ldots,i_{n}} \,.
\end{equation} 
Suppose $(\nu_2(P),\nu_4(P),\nu_5(P),\nu_6(P)\ne (0,0,0,0))$, then for any $D>0$, \eqref{4.40} equals to a finite (depends on $D$) sum of terms in the form 
\begin{equation} \label{4.41}
\sum_{i_1,\ldots\,i_{n'}} a'_{i_1,\ldots,i_{n'}}P'_{i_1,\ldots,i_{n'}}
\end{equation}
with an error $\OO(N^{-D})$, where $P'\in \cal P^{(n',t')}(\{G\})$, and $a'_{i_1,\ldots,i_{n'}}$ is a family of complex number uniformly bounded in $i_1,\ldots,i_n'$. For each $P'$ appears in the sum, we have
\begin{equation}  \label{442}
t'+n'+b_1(P') \le t+n+b_1(P)-\chi\,,
\end{equation}	
and 
\begin{equation}  \label{IG.Kid}
t'+n'+b(P')\le t+n+b(P)\,,
\end{equation}
where $\chi$ is defined in \eqref{chi}.
\end{lemma}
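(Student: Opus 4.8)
The plan is to mimic closely the proof of Lemma \ref{lem5.3}, the only genuine change being that we now track the finer weight $b_1$ (in which every $\bb E\ul{A^m}$ counts for $O(1)$, by Proposition \ref{lem4.4}) instead of $b_0$, and we also track $b$. As in Lemma \ref{lem5.3}, write $P$ as a monomial in factors of the form $\bb E\ul{G^m}$ or $\bb E Q_{r,s}$ as in \eqref{QG}. The hypothesis $(\nu_2(P),\nu_4(P),\nu_5(P),\nu_6(P))\neq(0,0,0,0)$ guarantees there is a factor in $P$ that is \emph{not} of the harmless type $\bb E\ul{G}$ or $\bb E\ul{G^m}$ with all off-diagonal entries matched — more precisely, there is some factor $\bb E X$ with $X$ containing a centred average $\langle\ul{G^{\delta}}\rangle$, or an off-diagonal entry $(G^{\sigma})_{xy}$ with $x\neq y$, or a stray index. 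I would first dispose of the case where the only ``bad'' factor is a pure $\bb E\ul{G^m}$ with $m\geq 2$: this cannot happen under the stated hypothesis because such a factor contributes only to $\nu_1,\nu_3$, so at least one of the factors we isolate must be a $\bb E Q_{r,s}$ with $s\geq 1$ or $r\geq 1$ and some $x_q\neq y_q$.

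First I would isolate such a factor $\bb E X$ and apply the cumulant expansion \eqref{eq:cumulant_expansion} to it exactly as in \eqref{531}–\eqref{Qrs} (for $s\geq1$), in \eqref{Qr} (for $s=0$, $r\geq1$), using $z_1 G=GH-I$ to extract an $H_{ij}$ and the differentiation rule \eqref{3.15}. Truncating the cumulant expansion at a level $L=L(D,r,s)$ and absorbing the remainder into the $O(N^{-D})$ error (the routine estimate as in Lemma 4.6(i) of \cite{HK}), we obtain \eqref{4.40} as a finite sum of terms \eqref{4.41}. The point is now purely bookkeeping on the maps $\nu_i$: dividing out $\bb E X$ and re-multiplying by the Schwinger–Dyson expansion replaces $\bb E X$ by a sum of terms each of which has \emph{strictly larger} total power $t'+n'$ compensated by a loss in $b_1$, or a gain in $\nu_2/\nu_4$ compensated likewise. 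Concretely, I expect to check, factor by factor in \eqref{Qrs} and \eqref{Qr} and for the second-cumulant terms in \eqref{yk},\eqref{zk} via Lemma \ref{useful}, that each descendant satisfies \eqref{442}; since $b_1\geq b$ always (as $\alpha\nu_2\geq 0$ and the $\nu_6$ term is absent from $b_1$ but nonnegative in $b$ with the opposite sign convention — I'd verify the exact inequality $b(P)\leq b_1(P)$ termwise), and since $b$ changes by at most the analogous amount without the $\chi$ improvement, \eqref{IG.Kid} follows from the same case analysis with the weaker conclusion ``$\leq$'' rather than ``$\leq-\chi$''.

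The key mechanism producing the $-\chi$ gain in \eqref{442} is the same as in \eqref{reduce}: when the differentiated factor splits into two off-diagonal Green-function entries, $\nu_2$ increases but is overwhelmed by the factor $N^{-(1-\alpha)/2}$ from each new off-diagonal entry (an application of the Ward identity, via Lemma \ref{prop4.4}) — or, when a centred average $\langle\ul{G^a}\rangle$ is produced, by the factor $N^{-(1-\alpha)}$ from $\nu_4$; when instead no index structure changes but the cumulant order rises, Lemma \ref{useful} trades one factor of $N^{-1/2}$ in the cumulant for at most $N^{1-\alpha}$ in $b_0$ (hence a net gain since $\alpha\geq 1/2$ after the replacement $\alpha\mapsto\alpha'$), and the same accounting works verbatim for $b_1$. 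The main obstacle — really the only place requiring care — is verifying that every one of the roughly dozen term types in \eqref{Qrs}, \eqref{Qr} genuinely satisfies both \eqref{442} and \eqref{IG.Kid}, including the ``diagonal'' terms such as $\bb E Q_{r,s-1}\langle\ul{G^{\delta_s-a}}\rangle\bb E\ul{G^{a+1}}$ in \eqref{5.7}-type cases, where $b_0$ is preserved but $\nu_3$ jumps; here one must use that $b_1$ ignores the $\bb E\ul{G^{a+1}}$ factor entirely (it contributes $0$ under $b_1$, whereas it contributed $\alpha\, a$ under $b_0$), so the ``$b_0$-neutral, $\nu_3$-increasing'' branch of Lemma \ref{lem5.3} actually becomes a genuine $b_1$-\emph{decreasing} branch, which is exactly why Lemma \ref{lem4.6666} gets the clean strict inequality \eqref{442} with no accompanying ``$\nu_3$ increases'' escape clause. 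I would write this out as a short table of the relevant factors with their $(\Delta t,\Delta n,\Delta\nu_2,\Delta\nu_4,\Delta\nu_5,\Delta\nu_6)$ and read off both inequalities.
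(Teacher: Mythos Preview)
Your plan is correct and matches the paper's proof closely: split into the cases $\nu_4(P)\neq 0$ (expand a factor $\bb E Q_{r,s}$ with $s\geq 1$ via \eqref{Qrs}) and $\nu_4(P)=0$ (expand a factor $\bb E Q_r$ via \eqref{Qr}), then verify \eqref{442} and \eqref{IG.Kid} term by term, handling higher cumulants through Lemma~\ref{useful}. Your key observation---that the branches which in Lemma~\ref{lem5.3} merely preserved $b_0$ while increasing $\nu_3$ now strictly decrease $b_1$, because the newly produced factor $\bb E\ul{G^{a+1}}$ contributes to $\nu_1$ (invisible to $b_1$) rather than to $\nu_2$---is precisely the reason Lemma~\ref{lem4.6666} has no ``$\nu_3$-increasing'' escape clause.

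One small imprecision: your enumeration of which factor to expand (``$s\geq 1$ or $r\geq 1$ and some $x_q\neq y_q$'') omits the subcase $\nu_4=\nu_5=\nu_6=0$, $\nu_2\neq 0$, where all entries are diagonal but some $\sigma_q\geq 2$. The paper handles this explicitly within Case~2 by taking $\sigma_r\geq 2$; your plan to use \eqref{Qr} for $s=0$, $r\geq 1$ already covers it mechanically, so this is a wording issue rather than a gap.
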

\begin{proof}
\emph{Case 1.} Suppose $\nu_4(P) \ne 0$, then there is a factor $\bb E Q_{r,s}$ in $P$ for $s \ge 1$, where $\bb EQ_{r,s}$ is defined as in \eqref{QG}. We have
\begin{equation} \label{4.44}
\sum_{i_1,\ldots\,i_{n}} a_{i_1,\ldots,i_{n}}P_{i_1,\ldots,i_{n}}=\sum_{i_1,\ldots\,i_{n}} a_{i_1,\ldots,i_{n}}\cdot P_{i_1,\ldots,i_{n}}/\bb EQ_{r,s} \cdot\bb E Q_{r,s}
\end{equation}
and
\begin{equation*}
\sum_{i_1,\ldots\,i_{n}} \abs{a_{i_1,\ldots,i_{n}}P_{i_1,\ldots,i_{n}}/\bb EQ_{r,s}}=\OO(N^{D'})
\end{equation*}
for some fixed $D'\equiv D'(P,r,s)>0$. Recall that we have already obtained the expansion of $\bb E Q_{r,s}$ in \eqref{Qrs}. By plugging \eqref{Qrs} into \eqref{4.44}, we rewrite \eqref{4.40} into a finite sum of terms in the form \eqref{4.41} with an error $\OO(N^{-D})$. Note that we can now use the improved estimate Lemma \ref{lem1}. By Lemmas \ref{useful} and \ref{lem1}, we can show that, for each $P'$ we get, both \eqref{442} and \eqref{IG.Kid} are satisfied. This completes the proof for $\nu_4(P) \ne 0$.

\emph{Case 2.} Suppose $\nu_4(P) = 0$, then there is a factor $\bb EQ_{r}$ in $P$, where $\bb EQ_r$ is defined as in \eqref{4.23}. If $\nu_2(P)\ne 0$, then $\max\limits_{1\le q \le r}\sigma_q \ge 2$, and we can W.L.O.G. assume $\sigma_r\ge 2$. If $\nu_2=0$, then $(\nu_5 ,\nu_6)\ne (0,0) $, and we can assume $(G^{\sigma_r})_{x_r y_r}=G_{x_r y_r}$ for some $x_r \ne y_r$. In both situations, we have
\begin{equation} \label{4.47}
\sum_{i_1,\ldots\,i_{n}} a_{i_1,\ldots,i_{n}}P_{i_1,\ldots,i_{n}}=\sum_{i_1,\ldots\,i_{n}} a_{i_1,\ldots,i_{n}}\cdot P_{i_1,\ldots,i_{n}}/\bb EQ_{r} \cdot\bb E Q_{r}
\end{equation}
and
\begin{equation*}
\sum_{i_1,\ldots\,i_{n}} \abs{a_{i_1,\ldots,i_{n}}P_{i_1,\ldots,i_{n}}/\bb EQ_{r}}=\OO(N^{D'})
\end{equation*}
for some fixed $D'\equiv D'(P,r)>0$. We now plug the expansion of $\bb EQ_r$ that we obtained in \eqref{Qr} into \eqref{4.47}, which rewrites \eqref{4.40} into a finite sum of terms in the form \eqref{4.41} with an error $\OO(N^{-D})$. Note that we either have $\sigma_r \ge 2$, or $(G^{\sigma_r-1})_{x_r y_r}=\delta_{x_r y_r}$ for $x_r \ne y_r$, and in the second case the two summations over $x_r$ and $y_r$ will become a single summation. By exploring the terms in right-hand side of \eqref{Qr} carefully and using Lemma \ref{useful}, we see that for each $P'$ we get, both \eqref{442} and \eqref{IG.Kid} are satisfied. This completes the proof for $\nu_4(P)=0$.    
\end{proof}
\begin{proof} [Proof of Proposition \ref{prop4.3}(i)] 
The proof is similar to (also simpler than) that of Proposition \ref{lem4.4}, i.e.\ we proceed by generating a rooted tree. We have a vertex set $\bb V$. Every $v \in \bb V$ is labelled formally by
\[
\sum_{i_1,\ldots, i_n}a^{(v)}_{i_1,...,i_n}P^{(v)}_{i_1,...,i_n}\,,
\] 
where $P^{(v)} \in \cal P^{(n,t)}(\{G\}) \subset \cal P(\{G\})$ for some $n=n(v) \in \bb N$, $t=t(v) \in \bb R$, and $(a^{(v)}_{i_1\ldots i_n})_{1\le i_1,\ldots,i_n\le N}$ is a family of complex numbers that is uniformly bounded in $i_1,\ldots,i_n$. We denote $b_1(v)\deq b_1(P^{(v)})$, $b(v)\deq b(P^{(v)})$, and $\nu_i(v)\deq \nu_i(P^{(v)})$ for $i=1,2,...,6$.  Each vertex $v$ has a \emph{value}, a deterministic number obtained by formally evaluating the all expectations and products in its definition. By a slight abuse of notation, as in Section \ref{sec3}, we use the symbol $v$ both for the formal monomial and its value.

We construct a tree with root 
\begin{equation*}
r\deq \sum_{i_1,\ldots,i_n} a_{i_1,\ldots,i_n}P_{i_1,\ldots,i_n} 
\end{equation*} 
given at the left-hand side of \eqref{4.39}. By Lemma \ref{lem1} it suffices to consider the situation where $(\nu_2(r),\nu_4(r),\nu_5(r),\nu_6(r))\ne (0,0,0,0)$. Then by setting $D=-t(r)-n(r)-b(r)$ in Lemma \ref{lem4.6666}, we have
\begin{equation} \label{grudo}
r=\sum_{u \in \cal D(r)} u+\OO(N^{t+n+b(r)})\,,
\end{equation}
where $\cal D(r)$ is the set of descendants of $r$, and $|\cal D(r)|=\OO_r(1)<+\infty$. We also observe from Lemma \ref{lem4.6666} that
\[
t(u)+n(u)+b_1(u)\leq t(r)+n(r)+b_1(r)-\chi \mbox{\quad and \quad} t(u)+n(u)+b(u)\leq t(r)+n(r)+b(r)
\]
for all $u \in \cal D(r)$. For each vertex $u \in \cal D(r)$, if $(\nu_2(u),\nu_4(u),\nu_5(u),\nu_6(u))= (0,0,0,0)$, then by Lemma \ref{lem1} we have
\begin{equation*}
u=\OO(N^{t(u)+r(u)})=\OO(N^{t(u)+n(u)+b(u)})\le \OO(N^{t(r)+n(r)+b((r))})\,,
\end{equation*}
and we remove $u$ from the tree and classify it in the last term of \eqref{grudo}. Thus every $u \in \cal D(r)$ also satisfies
\[
(\nu_2(u),\nu_4(u),\nu_5(u),\nu_6(u))\ne (0,0,0,0)\,.
\]
We can repeat the above process and generate a locally finite tree, where each vertex $v$ satisfies\\ $(\nu_2(u),\nu_4(u),\nu_5(u),\nu_6(u))\ne (0,0,0,0)$. For each vertex $v$ and its descendent $u$, we have
\begin{equation} \label{kass}
t(u)+n(u)+b_1(u)\leq t(r)+n(r)+b_1(r)-\chi \mbox{\quad and \quad} t(u)+n(u)+b(u)\leq t(r)+n(r)+b(r)\,.
\end{equation}
By the first relation in \eqref{kass} and Lemma \ref{lem1}, we see that after at most $\ceil {(b_1(r)-b(r)+1 )/\chi} $ generations, all the vertices are bounded by $\OO(N^{t(r)+n(r)+b(r)})$, and this completes the proof.
\end{proof}
We end this section by proving Lemma \ref{lem4.3} mentioned above. This serves as a corollary of Proposition \ref{prop4.3}(i) and Lemma \ref{useful}. 
\begin{proof}[Proof of Lemma \ref{lem4.3}]
As usual, we use the resolvent identity $zG=HG-I$ and the formula \eqref{eq:cumulant_expansion} to get
\begin{equation*}
\bb E \ul{G}=\frac{1}{U}\bigg(1+\bb E \langle \ul{G}\rangle ^2+\frac{1}{N}\,\bb E \ul{G^{2}}-\sum_{k=2}^{L}X^{(0)}_{k}\bigg)+\OO(N^{-2})\,,
\end{equation*}
where $U=-z_1-\bb E \ul{G}$, $L \in \bb N$, and
\begin{equation*} 
X^{(0)}_k\deq \frac{1}{N} \sum_{i,j}\frac{1}{k!}\cal C_{k+1}(H_{ji}) \bb E \partial^k_{ji} G_{ij}\,.
\end{equation*}
By Proposition \ref{prop4.3}(i) one easily checks that $ \bb E \langle \ul{G}\rangle ^2$ and $X^{(0)}_{2}$ are both bounded by $\OO(N^{-3/2})$. Also, using Lemma \ref{useful} we can see $\{X_{k}^{(0)}\}_{k\ge 4}$ are uniformly bounded by $\OO(N^{-2})$. Thus
\begin{equation} \label{642}
(\bb E \ul{G})^2+z_1\bb E \ul{G}+1+\frac{1}{N}\bb E \ul{G^2}-X_3^{(0)}+\OO(N^{-3/2})=0\,.
\end{equation}	
We proceed the proof by first deducing a rough value of $\bb E \ul{G}$ from \eqref{642} and then use the result to compute $N^{-1}\bb E \ul{G^2}$; the latter value can be plugged back into \eqref{642} to have a more precise value of $\bb E \ul{G}$.

By Proposition \ref{prop4.3}(i) we have $X_3^{(0)}=\OO(N^{-1})$ and $N^{-1} \bb E \ul{G^2}=\OO(N^{-1})$, and this gives
\begin{equation*}
(\bb E \ul{G})^2+z_1\bb E \ul{G}+1+\OO(N^{-1})=0
\end{equation*}
Note that $m(z_1)$ is the unique solution of
$
x^2+z_1x+1=0
$
satisfying $\im m(z_1) \im z_1>0$. Let $\tilde{m}(z_1)$ be the other solution of  $x^2+z_1x+1=0$. An application of Lemma 5.5 in \cite{BK16} gives
\begin{equation} \label{4.64}
\min\{ |\bb E  \underline{G} -m(z_1)|,\,|\bb E  \underline{G} -\tilde{m}(z_1)|\}=\frac{\OO(N^{-1})}{\sqrt{|2-|E||}} =\OO(N^{-1})\,.
\end{equation}
Since $G=(H-z_1)^{-1}$, we know that $\sgn (\im \ul G)=\sgn (\im z_1)=1$. Also, we have $\im \tilde{m}(z_1) \le -c$ for some $c=c(\re z_1)=c(E)>0$. This shows $|\bb E  \underline{G} -\tilde{m}(z_1)|\ge c$.
Thus from \eqref{4.64} we have
\begin{equation} \label{645}
|\bb E  \underline{G} -m(z_1)|=\OO(N^{-1})\,.
\end{equation}
Now we carefully apply the differentials in $X_3^{(0)}$ and estimate the results by Proposition \ref{prop4.3}(i). This gives
\begin{equation*}
X_3^{(0)}=-\frac{1}{N}\sum_{i,j}\cal C_4(H_{ij})\bb E G_{ii}^2G_{jj}^2+\OO(N^{-2})\,,
\end{equation*}
and we have
\begin{equation} \label{459}
(\bb E \ul{G})^2+z_1\bb E \ul{G}+1+\frac{1}{N}\bb E \ul{G^2}+\frac{1}{N}\sum_{i,j}\cal C_4(H_{ij})\bb E G_{ii}^2G_{jj}^2+\OO(N^{-3/2})=0 \,.
\end{equation}	
By using \eqref{Gm} for $m=2$ and $D+D''=1$ we have
\begin{equation} \label{647}
\bb E \ul{G^2}=\frac{1}{T}\Big(\bb E \ul{G}+2\bb E \langle\ul{ G}\rangle \langle \ul{G^2} \rangle +\frac{2}{N}\bb E \ul{G^3}-\sum_{k=2}^{l}X_k\Big)+\OO(N^{-1})\,,
\end{equation}
where $l \in \bb N$. By Proposition \ref{prop4.3}(i) we see that all but the first term on the right-hand side of \eqref{647} are bounded by $\OO(N^{-1})$. Together with \eqref{645} we have
\begin{equation} \label{649}
\bb E \ul{G^2}=T^{-1}\bb E \ul{G}+\OO(N^{-1})=-\frac{1}{2}+\frac{z_1}{2\sqrt{z_1^2-4}}+\OO(N^{-1})\,,
\end{equation}
and this finishes the calculation of $\bb E \ul{G^2}$. 
Again by cumulant expansion \eqref{eq:cumulant_expansion} and Proposition \ref{prop4.3}(i), we can show that
\begin{equation} \label{650}
\frac{1}{N}\sum_{i,j}\cal C_4(H_{ij})\bb E G_{ii}^2G_{jj}^2=\frac{m(z_1)^4}{N}\sum_{i,j}\cal C_4(H_{ij})+\OO(N^{-2})\,.
\end{equation}
An elementary computation using \eqref{459}, \eqref{649} and \eqref{650} shows
\begin{equation*} 
\bb E \ul{G}=m(z_1)-\frac{1}{N\sqrt{z_1^2-4}}\Big(-\frac{1}{2}+\frac{z_1}{2\sqrt{z_1^2-4}}+m(z_1)^4\sum_{i,j}\cal C_4(H_{ij})\Big)+\OO(N^{-3/2})
\end{equation*}
as desired.
\end{proof}
\subsection{Two matrix case} \label{sec4.3} In this section we prove Proposition \ref{prop4.3}(ii). It suffices to show the case when $\cal A=\{G,F^*\}$. In this section, we use the fundamental parameter
\[
\tilde\chi\equiv\tilde\chi(\alpha,\gamma)\deq \min\{\alpha-\gamma,\alpha/2,(1-\alpha)/2\}\,.
\] 
As explained at the beginning of Section \ref{sec5}, throughout the section we replace $\alpha$ by $\alpha'=(1+\alpha)/2$, and this results in $\tilde{\chi}\geq \tau/4>0$.
We still follow our routine path of generating a rooted tree. 
\begin{lemma} \label{fly the ocean in a silver plane}
Let $P \in P^{(n,t)}(\{G,F^*\})$ for some $n \in \bb N$ and $t \in \bb R$. Let $(a_{i_1,\ldots,i_n})_{i_1,\ldots,i_n}$ be a family of complex numbers that is uniformly bounded in $i_1,\ldots,i_n$. Consider the term
\begin{equation} \label{4.400}
\sum_{i_1,\ldots\,i_{n}} a_{i_1,\ldots,i_{n}}P_{i_1,\ldots,i_{n}} \,.
\end{equation} 
Suppose $(\nu_2(P),\nu_4(P),\nu_5(P),\nu_6(P))\ne (0,0,0,0)$, then for any $D>0$, \eqref{4.400} equals to a finite (depends on $D$) sum of terms in the form 
\begin{equation} \label{4.4111}
\sum_{i_1,\ldots\,i_{n'}} a'_{i_1,\ldots,i_{n'}}P'_{i_1,\ldots,i_{n'}}
\end{equation}
with an error $\OO(N^{-D})$, where $P'\in \cal P^{(n',t')}(\{G\})$, and $a'_{i_1,\ldots,i_{n'}}$ is a family of complex number uniformly bounded in $i_1,\ldots,i_n'$. For each $P'$ appears in the sum, we have
\begin{equation}  \label{4422}
t'+n'+b_1(P') \le t+n+b_1(P)-\tilde\chi\,,
\end{equation}	
and 
\begin{equation}  \label{IG.Zz1tai}
t'+n'+b_*(P')\le t+n+b_*(P)\,,
\end{equation}
where $\tilde\chi\equiv\tilde\chi(\alpha,\gamma)\deq \min\{\alpha-\gamma,\alpha/2,(1-\alpha)/2\}=\tau/4>0$\,.
\end{lemma}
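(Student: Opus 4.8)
The plan is to mimic the proof of Lemma \ref{lem4.6666} (the single-matrix off-spring lemma), replacing the a priori input Lemma \ref{lem1} by its two-matrix analogue, and tracking a second weight function $b_*$ alongside $b_1$. First I would observe that, exactly as in the proofs of Lemma \ref{lem5.3} and Lemma \ref{lem4.6666}, the hypothesis $(\nu_2(P),\nu_4(P),\nu_5(P),\nu_6(P)) \ne (0,0,0,0)$ guarantees that $P$ contains a factor $\bb E X$ which is either of the form $\bb E Q_{r,s}$ with $s \geq 1$ (when $\nu_4(P) \ne 0$), or of the form $\bb E Q_r$ with either $\max_q \sigma_q \geq 2$ or some off-diagonal entry $G_{x_r y_r}$, $x_r \ne y_r$, or $(F^*)_{x_r y_r}$, $x_r \ne y_r$ (when $\nu_4(P) = 0$ but $(\nu_2,\nu_5,\nu_6) \ne 0$). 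Here $Q_{r,s}$ now involves both $G$ and $F^*$; I would split the factor out as in \eqref{4.44} or \eqref{4.47}, use the trivial bound $\norm{G},\norm{F^*} \le N^\alpha$ to control the quotient $P/\bb E X$ by some $O(N^{D'})$, and then apply the resolvent identity $z_1 G = GH - I$ (respectively $z_2^* F^* = F^* H - I$, depending on which matrix power is hit first in $X$) followed by the cumulant expansion Lemma \ref{lem:cumulant_expansion} to obtain a Schwinger–Dyson expansion for $\bb E X$ of exactly the shape \eqref{Qrs'} or \eqref{Qr'}, with a remainder $O(N^{-D-D'})$ after choosing $l \geq L(D+D',r,s)$. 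Substituting this expansion back gives the claimed finite sum of terms \eqref{4.4111} with error $O(N^{-D})$.

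The bulk of the work is then the bookkeeping of the two estimates \eqref{4422} and \eqref{IG.Zz1tai} for each generated $P'$. For the first inequality, $t'+n'+b_1(P') \le t+n+b_1(P) - \tilde\chi$, I would argue as in Lemma \ref{lem5.3}: every second-cumulant term produced by the differentiation rules \eqref{3.15} (and, for the two-matrix case, the resolvent identity that converts things like $G F^*$ into $(G - F^*)/(z_1 - z_2^*)$) strictly lowers $t + n + b_1$ by at least $\tilde\chi$ — this is where the $\alpha - \beta$ term in $\tilde\chi$ enters, since applying the resolvent identity across the two spectral parameters costs a factor $\omega^{-1} = N^\beta$ but removes a $\ul{\cdot}$, and the $\alpha/2$, $(1-\alpha)/2$ terms come from the usual Ward-identity gain. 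For the higher-cumulant terms ($k \ge 4$) I would invoke Lemma \ref{useful} to compare them against the $k=3$ term, using $\cal C_{k+1}(H_{ij}) = O_k(N^{-(k+1)/2})$ from Lemma \ref{lem} together with the identities \eqref{5.2}–\eqref{5.3} (with $\lambda = \alpha$) to conclude $t' + b_1(P') \le \tilde t + b_1(\tilde P)$ and hence that these terms also satisfy \eqref{4422}. For the second inequality \eqref{IG.Zz1tai}, I would feed the same list of generated terms into Proposition \ref{prop4.3}(ii) — or rather, since that is the statement we are in the middle of bootstrapping, into the combinatorial bookkeeping of $b_*$ via Lemma \ref{useful} with $\lambda = \beta$ — and check that each differentiation or resolvent step is $b_*$-nonincreasing in the quantity $t + n + b_*$. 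The case analysis is identical in structure to Lemma \ref{lem4.6666}, just carried out with $\cal A = \{G, F^*\}$ and with $b_0$ replaced throughout by $b_1$ and $b$ replaced by $b_*$.

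The main obstacle, and the only place where genuine care is needed beyond transcribing the one-matrix argument, is verifying the $b_*$-monotonicity \eqref{IG.Zz1tai} across the resolvent-identity steps that are special to the two-matrix setting. In the single-matrix case every application of $zG = GH - I$ keeps all factors inside the same $\cal P(\{G\})$, whereas here the differentiation rule can produce a mixed product such as $G_{ik}(F^*)_{kj}$ and subsequently the resolvent identity $G F^* = \frac{G - F^*}{z_1 - z_2^*}$ converts a length-two mixed chain into a sum of single-matrix entries at the price of $(z_1 - z_2^*)^{-1} = O(N^\beta)$; one must check that the accompanying drop in $\nu_2$ (by one) exactly absorbs this $N^\beta$ in the definition $b_*(P) = \beta\nu_2(P) - (1-\beta)\nu_4(P) - \nu_5(P)/2 - (1-\beta)\nu_6(P)/2$, leaving $t + n + b_*$ unchanged and $t + n + b_1$ strictly smaller (since $b_1$ carries $\alpha$ rather than $\beta$ in front of $\nu_2$, and $\alpha - \beta = \tilde\chi$ up to the minimum). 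This is precisely the mechanism by which the weaker two-matrix bound $b_*$ — which is sharp for $\bb E \langle \ul G \rangle \langle \ul{F^*} \rangle$, as the remark after Proposition \ref{prop4.3} notes — survives the recursion. Once this is checked, Lemma \ref{fly the ocean in a silver plane} follows, and Proposition \ref{prop4.3}(ii) is then obtained exactly as part (i) is: build the rooted tree with root the left-hand side of the desired bound, use Lemma \ref{lem1} to prune any vertex with $(\nu_2,\nu_4,\nu_5,\nu_6) = (0,0,0,0)$, and use \eqref{4422} to conclude that after at most $\ceil{(b_1(r) - b_*(r) + 1)/\tilde\chi}$ generations every surviving vertex is $O(N^{t(r)+n(r)+b_*(r)})$.
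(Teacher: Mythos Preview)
Your proposal is correct and follows essentially the same route as the paper. The paper also splits into the two cases $\nu_4(P)\ne 0$ and $\nu_4(P)=0$, expands the relevant factor $\bb E\tilde Q_{r,s}$ or $\bb E\tilde Q_r$ (now built from both $G$ and $F^*$) via the resolvent identity and cumulant expansion exactly as in Lemma~\ref{lem4.6666}, and then applies the resolvent identity $GF^* = (z_1-z_2^*)^{-1}(G-F^*)$ to any mixed products so that every resulting term lands back in $\cal P(\{G,F^*\})$; the verification of \eqref{4422} and \eqref{IG.Zz1tai} is then left as ``by exploring the terms carefully and using Lemma~\ref{lem1}''. Your identification of the resolvent-identity step as the only genuinely new ingredient, and of the $\alpha-\beta$ contribution to $\tilde\chi$ as arising precisely there, is exactly the point. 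One small imprecision: your sentence ``the accompanying drop in $\nu_2$ (by one) exactly absorbs this $N^\beta$'' is not literally what happens for the $\ul{\,\cdot\,}$-type mixed terms (there it is $\nu_1$, not $\nu_2$, that drops, and $b_*$ does not see $\nu_1$); the correct accounting compares the final $P'$ directly against the original $P$ and tracks the simultaneous changes in $t$, $\nu_2$, $\nu_4$ coming from the full expansion step, not just the resolvent identity in isolation. With that adjustment the bookkeeping goes through as you describe.
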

By a similar argument as in Section \ref{sec4.2}, Lemma \ref{fly the ocean in a silver plane} immediately implies Proposition \ref{prop4.3}(ii). The rest of this section is devoted into proving Lemma \ref{fly the ocean in a silver plane}.

As an analogue of the single matrix case, for $(r,s) \ne (0,0)$, we introduce a new set of quantity 
\begin{equation*}  
\bb E \tilde{Q}_{r,s} \deq \bb E (\prescript{1}{G}^{\sigma_1})_{x_1 y_1}\cdots(\prescript{r}{G}^{\sigma_r})_{x_r y_r}\langle \ul{\prescript{r+1}{G}^{\delta_1}}\rangle \cdots \langle \ul{\prescript{r+s}{G}^{\delta_s}}\rangle\,,
\end{equation*} 
where $\prescript{1}{G},\ldots,\prescript{r+s}{G} \in \{G,F^*\}$.
And for $s=0$, we define 
\begin{equation*} 
\bb E\tilde{Q}_r \deq \bb E\tilde{Q}_{r,0} = \bb E (\prescript{1}{G}^{\sigma_1})_{x_1 y_1}\cdots(\prescript{r}{G}^{\sigma_r})_{x_r y_r}\,.
\end{equation*}
Let $P$ be given as in Lemma \ref{fly the ocean in a silver plane}. We split the cases according to different values of the indices $(\nu_2(P),\nu_4(P),\nu_5(P),\nu_6(P)).$

\emph{Case 1.} Suppose $\nu_4(P) \ne 0$, then there is a factor $\bb E \tilde{Q}_{r,s}$ in $P$ for $s \ge 1$. We have
\begin{equation} \label{t4.44}
\sum_{i_1,\ldots\,i_{n}} a_{i_1,\ldots,i_{n}}P_{i_1,\ldots,i_{n}}=\sum_{i_1,\ldots\,i_{n}} a_{i_1,\ldots,i_{n}}\cdot P_{i_1,\ldots,i_{n}}/\bb E\tilde Q_{r,s} \cdot\bb E \tilde Q_{r,s}
\end{equation}
and
\begin{equation*}
\sum_{i_1,\ldots\,i_{n}} \abs{a_{i_1,\ldots,i_{n}}P_{i_1,\ldots,i_{n}}/\bb EQ_{r,s}}=\OO(N^{D'})
\end{equation*}
for some fixed $D'\equiv D'(P, r,s)>0$. Let us abbreviate $\tilde{Q}_{r,s-1} \deq \tilde{Q}_{r,s}/\langle \ul{\prescript{r+s}{G}^{\delta_s}}\rangle$, $\tilde{Q}_{r,s-2}^{(p)} \deq \tilde{Q}_{r,s-1}/\langle \ul{\prescript{r+p}{G}^{\delta_p}}\rangle$ for $1\le p \le s-1$, and  $\tilde{Q}_{r-1,s-1}^{(q)} \deq \tilde{Q}_{r,s-1}/(\prescript{q}{G}^{\sigma_q})_{x_q y_q}$ for $1 \le q \le r$. As in \eqref{Qrs}, we have for some fixed integer $l$ that
\begin{multline} \label{tQrs}
\bb E \tilde{Q}_{r,s}=\frac{1}{T(\prescript{r+s}{G})} \bigg(\bb E \tilde{Q}_{r,s-1} \langle \ul{\prescript{r+s}{G}^{\delta_s-1}} \rangle+
\sum_{a=0}^{\delta_s-1} \bb E \tilde{Q}_{r,s-1} \langle \ul{\prescript{r+s}{G}^{\delta_s-a}} \rangle \langle \ul{\prescript{r+s}{G}^{a+1}} \rangle\\
-\sum_{a=0}^{\delta_s-1} \bb E \tilde{Q}_{r,s-1} \bb E \langle \ul{\prescript{r+s}{G}^{\delta_s-a}} \rangle \langle \ul{\prescript{r+s}{G}^{a+1}} \rangle+\frac{\delta_s}{N}\bb E \tilde{Q}_{r,s-1} \langle \ul{\prescript{r+s}{G}^{\delta_s+1}} \rangle 
+ 2\sum_{a=1}^{\delta_s-1}\tilde{Q}_{r,s-1}\langle \ul{\prescript{r+s}{G}^{\delta_s-a}}\rangle \bb E \ul{\prescript{r+s}{G}^{a+1}}\\+\frac{2}{N^2}\sum_{p=1}^{s-1}\delta_p\bb E \tilde{Q}_{r,s-2}^{(p)}\bb E \ul{\prescript{r+p}{G}^{\delta_p+1}\cdot \prescript{r+s}{G}^{\delta_s}}
 +\frac{2}{N^2}\sum_{p=1}^{s-1}\delta_p\bb E Q_{r,s-2}^{(p)}\langle \ul{G^{\delta_p+\delta_s+1}}\rangle\\+\frac{2}{N^2}\sum_{q=1}^{r}\sigma_q\bb E \tilde{Q}_{r-1,s-1}^{(q)}(\prescript{q}{G}^{\sigma_q+1}\cdot \prescript{r+s}{G}^{\delta_s})_{x_q y_q}
-\sum_{k=2}^{l}\tilde{Y}_k+\OO(N^{-(D+D')}\bigg)\,, 
\end{multline}
where $\tilde{Y}_k $ is defined analogously to $Y_k$ in \eqref{Qrs},
and $T(\prescript{r+s}{G})\deq -z-2\bb E \ul{\prescript{r+s}{G}}$ for $\ul{\prescript{r+s}{G}}=(H-z)^{-1}$. Note that for $1 \le p \le s-1$, in \eqref{tQrs}
\begin{equation*} 
\frac{2\delta_p}{N^2}\bb E \tilde{Q}_{r,s-2}^{(p)}\bb E \ul{\prescript{r+p}{G}^{\delta_p+1}\cdot \prescript{r+s}{G}^{\delta_s}}
\end{equation*}
does not belong to $\cal P(\{G,F^{*}\})$ if $\prescript{r+p}{G} \ne \prescript{r+s}{G}$. In this situation we apply the resolvent identity
\begin{equation} \label{reso}
GF^{*}=\frac{G-F^*}{z_1-z_2^*}=\frac{1}{(z_1-z^*_2)N^{\gamma}}N^{\gamma}(G-F^*)
\end{equation}
repeatedly until every term is in $\cal P(\{G,F^{*}\})$. In \eqref{reso}, $1/((z_1-z^*_2)N^{\gamma})$ is treated as a complex number, and it is bounded by our assumption on $z_1$ and $z_2^*$. Similarly, for $1\le q \le r$ we repeatedly apply \eqref{reso} to
\begin{equation*}
\frac{2}{N^2}\sigma_q\bb E \tilde{Q}_{r-1,s-1}^{(q)}(\prescript{q}{G}^{\sigma_q+1}\cdot \prescript{r+s}{G}^{\delta_s})_{x_q y_q}
\end{equation*} 
if $\prescript{q}{G} \ne \prescript{r+s}{G}$. Thus every term we get is now in $\cal P(\{G,F^*\})$. By plugging the result into \eqref{t4.44}, we rewrite \eqref{4.400} into a finite sum of terms in the form \eqref{4.4111} with an error $\OO(N^{-D})$. By exploring the terms in \eqref{tQrs} carefully and using Lemma \ref{lem1}, we see that for each $P'$ we get, both \eqref{4422} and \eqref{IG.Zz1tai} are satisfied. This finishes the proof for $\nu_4(P) \ne 0$.

\emph{Case 2.} Suppose $\nu_4(P) = 0$, then there is a factor $\bb E\tilde{Q}_{r}$ in $P$. If $\nu_2(P)\ne 0$, then $\max\limits_{1\le q \le r}\sigma_q \ge 2$, and we can assume $\sigma_r\ge 2$. If $\nu_2=0$, then $(\nu_5,\nu_6) \ne (0,0)$, and we can assume $(\prescript{r}{G}^{\sigma_r})_{x_r y_r}=\prescript{r}{G}_{x_r y_r}$ for some $x_r \ne y_r$. In both situations, we have
\begin{equation*}
\sum_{i_1,\ldots\,i_{n}} a_{i_1,\ldots,i_{n}}P_{i_1,\ldots,i_{n}}=\sum_{i_1,\ldots\,i_{n}} a_{i_1,\ldots,i_{n}}\cdot P_{i_1,\ldots,i_{n}}/\bb E\tilde{Q}_{r} \cdot\bb E \tilde{Q}_{r} \ \ \mbox{and} \ \ \sum_{i_1,\ldots\,i_{n}} \abs{a_{i_1,\ldots,i_{n}}P_{i_1,\ldots,i_{n}}/\bb E\tilde{Q}_{r}}=\OO(N^{D'})
\end{equation*}
for some fixed $D'\equiv D'(t,P,r)>0$. Let us denote $\tilde{Q}_{r-1}\deq \tilde{Q}_r/(\prescript{r}{G}^{\sigma_r})_{ x_ry_r}$, and $\tilde{Q}_{r-1}^{(q)}\deq \tilde{Q}_{r-1}/(\prescript{q}{G}^{\sigma_q})_{x_q y_q}$ for $1\le q\le r-1$. As in \eqref{Qr}, for some fixed $l \ge 2$ we have

\begin{equation} \label{tQr}
\begin{aligned}
\bb E \tilde{Q}_{r}&=\frac{1}{U(\prescript{r}{G})}\bigg(\bb E \tilde{Q}_{r-1}(\prescript{r}{G}^{\sigma_r-1})_{x_r y_r}+\sum_{a=0}^{\sigma_r-1}\bb E \tilde{Q}_{r-1}(\prescript{r}{G}^{\sigma_r-p})_{x_ry_r}\langle \ul{\prescript{r}{G}^{a+1}} \rangle +\sum_{a=1}^{\sigma_r-1}\bb E \tilde{Q}_{r-1}(\prescript{r}{G}^{\sigma_r-p})_{x_ry_r}\bb E \ul{\prescript{r}{G}^{a+1}}  \\
&\ \  +\frac{1}{N}\sum_{q=1}^{r-1}(\sigma_r+1)\sum_{a=0}^{\sigma_q-1}\bb E \tilde{Q}_{r-1}^{(q)}\Big((\prescript{q}{G}^{\sigma_q-a})_{x_q y _r}(\prescript{q}{G}^{a+1}\cdot \prescript{r}{G}^{\sigma_r})_{x_r y_q}+(\prescript{q}{G}^{\sigma_q-a})_{x_q x _r}(\prescript{q}{G}^{a+1}\cdot \prescript{r}{G}^{\sigma_r})_{y_r y_q}\Big)\\
&\ \ +\frac{\sigma_r+1}{N} \bb E \tilde{Q}_{r-1}(\prescript{r}{G}^{\sigma_r+1})_{x_r y_r}-\sum_{k=2}^{l}\tilde{Z}_k+\OO(N^{-D-D'})\bigg)\,,
\end{aligned}
\end{equation}
where $\tilde{Z}_k$ is defined analogously to $Z_k$ in \eqref{Qr},
and $U(\prescript{r+s}{G})\deq -z-\bb E \ul{\prescript{r+s}{G}}$ for $\prescript{r+s}{G}=(H-z)^{-1}$. Again in \eqref{tQr}, for $1 \le q \le r-1$ and $0 \le a \le \sigma_q-1$, we apply \eqref{reso} to 
\begin{equation*}
\frac{1}{N}(\sigma_r+1)\bb E \tilde{Q}_{r-1}^{(q)}\Big((\prescript{q}{G}^{\sigma_q-a})_{x_q y _r}(\prescript{q}{G}^{a+1}\cdot \prescript{r}{G}^{\sigma_r})_{x_r y_q}+(\prescript{q}{G}^{\sigma_q-a})_{x_q x _r}(\prescript{q}{G}^{a+1}\cdot \prescript{r}{G}^{\sigma_r})_{y_r y_q}\Big)
\end{equation*}
if $\prescript{q}{G} \ne \prescript{r}{G}$. Thus every term we get now is in $\cal P(\{G,F^*\})$. In this way we rewrite \eqref{4.400} into a finite sum of terms in the form \eqref{4.4111} with an error $\OO(N^{-D})$. Note that we either have $\sigma_r \ge 2$, or $(G^{\sigma_r-1})_{x_r y_r}=\delta_{x_r y_r}$ for $x_r \ne y_r$, and in the second case the two summations over $x_r$ and $y_r$ will become a single summation. By exploring the terms in right-hand side of \eqref{tQr} carefully and applying Lemma \ref{lem1}, we see that for each $P'$ we get, both \eqref{4422} and \eqref{IG.Zz1tai} are satisfied. This completes the proof for $\nu_4(P)=0$.

\subsection{The complex Hermitian $H$} \label{sec4.4} In this section we explain the proof of Proposition \ref{prop4.3} when $H$ is complex Hermitian. By the arguments in Sections \ref{sec4.2} and \ref{sec4.3}, we see that it is enough to show the following Lemma.
\begin{lemma}
Lemmas \ref{lem1}-\ref{fly the ocean in a silver plane} remain true when $H$ is a complex Hermitian Wigner matrix.
\end{lemma}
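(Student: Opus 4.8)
The plan is to observe that the proofs of Lemma~\ref{lem1}, Proposition~\ref{lem4.4}, Lemma~\ref{lem5.3}, Lemma~\ref{lem4.6666} and Lemma~\ref{fly the ocean in a silver plane} use the real symmetry of $H$ only through the derivation of the Schwinger--Dyson type identities \eqref{Gm}, \eqref{Qrs} and \eqref{Qr} (and their counterparts obtained by replacing $G$ by $G^*$, $F$, $F^*$), which in turn rests on the resolvent identity $zG=GH-I$, the cumulant expansion of Lemma~\ref{lem:cumulant_expansion}, and the differentiation rule \eqref{3.15}. Every other ingredient is already symmetry-independent: Lemma~\ref{lem:4.2}, Proposition~\ref{prop4.4} and hence Lemma~\ref{lem1} are stated and proved for arbitrary Wigner matrices in the sense of Definition~\ref{def:wigner}, and the proof of Lemma~\ref{useful} explicitly covers both rules \eqref{3.15} and \eqref{4.9}. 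Hence it suffices to re-derive the Schwinger--Dyson identities in the complex Hermitian case.

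Thus I would first redo \eqref{Gm}, \eqref{Qrs} and \eqref{Qr}, replacing Lemma~\ref{lem:cumulant_expansion} by the complex cumulant expansion of Lemma~\ref{lem:5.1} for the off-diagonal entries (keeping Lemma~\ref{lem:cumulant_expansion}, with the compensating factor $(1+\delta_{ij})^{-k}$ as in \eqref{494}, for the real diagonal entries), and replacing \eqref{3.15} by \eqref{4.9}. The key observations are that the new rule $\partial_{ij}G_{ab}=-G_{ai}G_{jb}$ still converts one Green function entry into a product of exactly two Green function entries whose total power exceeds the original by one, and that $\cal C_{1,1}(H_{ij})=\bb E|H_{ij}|^2=1/N$ plays precisely the role of $\cal C_2(H_{ij})=(1+\delta_{ij})/N$ from the real case. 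The additional vanishing $\cal C_{1,0}(H_{ij})=\cal C_{0,1}(H_{ij})=0$ and $\cal C_{2,0}(H_{ij})=\cal C_{0,2}(H_{ij})=0$ (for $i\ne j$) only deletes some of the second-cumulant terms present in the real identities --- for instance, the term $N^{-1}\bb E\langle\ul{G^2}\rangle\langle\ul{F^*}\rangle$ of \eqref{preeqn} disappears, exactly as already observed around \eqref{498} --- so the complex identities have the same shape, with possibly fewer terms. Each resulting term still lies in $\cal P(\{G\})$, or, after repeated applications of the resolvent identity \eqref{reso}, in $\cal P(\{G,F^*\})$; and one verifies term by term, exactly as in Sections~\ref{sec:4.1}, \ref{sec4.2} and \ref{sec4.3}, and using Lemma~\ref{useful} with the complex differentiation rule, that every $P'$ produced obeys the required inequalities: \eqref{reduce} or \eqref{5.7} for Lemma~\ref{lem5.3}; \eqref{442} and \eqref{IG.Kid} for Lemma~\ref{lem4.6666}; \eqref{4422} and \eqref{IG.Zz1tai} for Lemma~\ref{fly the ocean in a silver plane}. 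The remainder terms are handled by the routine argument of Lemma~4.6(i) in \cite{HK}, now applied to the bound \eqref{complexR} instead of the one in \eqref{eq:cumulant_expansion}; this carries over because \eqref{complexR} has the same structure as the real remainder estimate.

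With the complex analogues of \eqref{Gm}, \eqref{Qrs} and \eqref{Qr} in hand, the tree constructions --- Steps~1--3 in the proof of Proposition~\ref{lem4.4}, and the analogous simpler trees used to prove Proposition~\ref{prop4.3} in Sections~\ref{sec4.2} and \ref{sec4.3} --- go through word for word, since they use only the structural output of the Schwinger--Dyson identities (the inequalities on $t$, $n$, $b_0$, $b_1$, $b$, $b_*$, and on $\nu_3$) together with Lemma~\ref{lem:4.2} and Lemma~\ref{lem1}, all of which already hold for complex Hermitian $H$. The one point that genuinely requires care, and the main (indeed essentially the only) obstacle, is the bookkeeping of the maps $\nu_1,\dots,\nu_6$ under the single-term rule \eqref{4.9} and under the alternation between Lemma~\ref{lem:5.1} (off-diagonal entries) and Lemma~\ref{lem:cumulant_expansion} (diagonal entries); but this is exactly what the already-established Lemma~\ref{useful} provides, so no new estimate is needed and the argument is purely mechanical.
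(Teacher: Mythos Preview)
Your proposal is correct and follows essentially the same approach as the paper: both identify that the only symmetry-dependent ingredients are the Schwinger--Dyson identities \eqref{Gm}, \eqref{Qrs}, \eqref{Qr}, re-derive them using the complex cumulant expansion (Lemma~\ref{lem:5.1}) together with the rule \eqref{4.9} and the $(1+\delta_{ij})^{-k}$ factor for diagonal entries, and then observe that the remaining combinatorial and tree arguments go through verbatim via Lemma~\ref{useful}. The paper in fact only spells out the complex analogue for Lemma~\ref{lem5.3} and says ``the rest follows similarly,'' so your write-up is, if anything, slightly more explicit.
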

\begin{proof}
We shall only remark on the proof of Lemma \ref{lem5.3} in the complex case, and the rest follows similarly. By examining the steps in the proof of Lemma \ref{lem5.3}, we see that the main steps shall remain the same in the complex case. The only difference is that when calculating $\bb E \ul{G^m}$, $\bb E Q_{r,s}$, and $\bb E Q_r$, we shall use the complex cumulant expansion formula Lemma \ref{lem:5.1}, and the results will be different from \eqref{Gm}, \eqref{Qrs}, and \eqref{Qr} respectively. 

By Lemmas \ref{lem:cumulant_expansion} and \ref{lem:5.1}, the complex Hermitian analogue of \eqref{Gm} is
\begin{equation*} 
\bb E \ul{G^m}=\frac{1}{T}\bigg(\bb E \ul{G^{m-1}}+\sum_{a=1}^{m}\bb E \langle \ul{G^a}\rangle \langle\ul{G^{m+1-a}}\rangle+\sum_{a=2}^{m-1}\bb E \ul{G^a}\,\bb E\ul{G^{m+1-a}}-\sum_{k=2}^{l}X^c_{k}+\OO(N^{-D-D'})\bigg)\,,
\end{equation*}
where $l$ is some fixed integer,
\begin{equation*} 
X^c_k\deq \frac{1}{N} \sum_{i,j}\sum_{p+q=k}\frac{1}{p!\,q!(1+\delta_{ij})^k}\cal C_{p,q+1}(H_{ij}) \bb E \frac{\partial^k (G^m)_{ij}}{\partial H^p_{ij}\,\partial H^q_{ji}}\,,
\end{equation*}
and we recall that $T\deq -z_1-2\bb E \ul{G}$. The $(1+\delta_{ij})^{-k}$ factor in \eqref{494} is because for $i=j$, we need to use Lemma \ref{lem:cumulant_expansion} for the real random variables $H_{ii}$. Similarly, for $\bb EQ_{r,s}$ defined in \eqref{QG}, the complex Hermitian analogue of \eqref{Qrs} is
\begin{equation*} 
\begin{aligned}
\bb E Q_{r,s}&=\frac{1}{T} \bigg(\bb E Q_{r,s-1} \langle \ul{G^{\delta_s-1}} \rangle+
\sum_{a=0}^{\delta_s-1} \bb E Q_{r,s-1} \langle \ul{G^{\delta_s-a}} \rangle \langle \ul{G^{a+1}} \rangle-\sum_{a=0}^{\delta_s-1} \bb E Q_{r,s-1} \bb E \langle \ul{G^{\delta_s-a}} \rangle \langle \ul{G^{a+1}} \rangle\\
&\ \ + 2\sum_{a=1}^{\delta_s-1}Q_{r,s-1}\langle \ul{G^{\delta_s-a}}\rangle \bb E \ul{G^{a+1}}+\frac{1}{N^2}\sum_{p=1}^{s-1}\delta_p\bb E Q_{r,s-2}^{(p)}\bb E \ul{G^{\delta_p+\delta_s+1}}\\
&\ \ +\frac{1}{N^2}\sum_{p=1}^{s-1}\delta_p\bb E Q_{r,s-2}^{(p)}\langle \ul{G^{\delta_p+\delta_s+1}}\rangle+\frac{1}{N^2}\sum_{q=1}^{r}\sigma_q\bb E Q_{r-1,s-1}^{(q)}G_{x_q y_q}^{\sigma_q+\delta_s+1}-\sum_{k=2}^lY^c_k+\OO(N^{-D-D'})\bigg)\,,
\end{aligned}
\end{equation*} 
where 
\begin{equation*} 
Y^c_k\deq \frac{1}{N} \sum_{i,j}\sum_{p+q=k}\frac{1}{p!\,q!(1+\delta_{ij})^k}\cal C_{p,q+1}(H_{ij}) \bb E \frac{\partial^k(( Q_{r,s-1}-\bb E Q_{r,s-1})  (G^{\delta_s})_{ij})}{\partial H^p_{ij}\, \partial H^q_{ji}}\,.
\end{equation*}
 Moreover, for $Q_r$ defined in \eqref{4.23}, the complex Hermitian analogue of \eqref{Qrs} is
\begin{equation*} 
\begin{aligned}
\bb E Q_{r}&=\frac{1}{U}\bigg(\bb E Q_{r-1}(G^{\sigma_r-1})_{x_r y_r}+\sum_{a=0}^{\sigma_r-1}\bb E Q_{r-1}(G^{\sigma_r-p})_{x_r y_r}\langle \ul{G^{a+1}} \rangle +\sum_{a=1}^{\sigma_r-1}\bb E Q_{r-1}(G^{\sigma_r-p})_{x_ry_r}\bb E \ul{G^{a+1}}  \\
&\ \  +\frac{1}{N}\sum_{q=1}^{r-1}(\sigma_r+1)\sum_{a=0}^{\sigma_q-1}\bb E Q_{r-1}^{(q)}(G^{\sigma_q-a})_{x_q y _r}(G^{\sigma_r+a+1})_{x_r y_q}-\sum_{k=2}^{K}Z^c_k+\OO(N^{-D-D'})\bigg)\,,
\end{aligned}
\end{equation*}
where
\begin{equation*}
Z_k^c\deq \frac{1}{N} \sum_{i,j}\frac{1}{p!\,q!(1+\delta_{ij})^k}\cal C_{p,q+1}(H_{ij}) \sum_{p+q=k} \bb E \frac{\partial^k( Q_{r-1}  (G^{\sigma_r})_{ij})}{\partial H^p_{ij}\,\partial H^q_{ji}}\,.
\end{equation*} Note in the above complex cumulant expansions, we have fewer second cumulant terms comparing to the real case, while we need to track more carefully of the higher cumulants terms. By applying the differential rule \eqref{4.9}, one easily checks that the argument in the real case also applies here, and the proof follows easily.   
\end{proof}		 

\section{The non-conjugate case} \label{sec7}
In this section we prove the following result, where the spectral parameters $z_1$ and $z_2^*$ in Theorem \ref{thm_resolvent} are replaced with $z_1$ and $z_2$, both having positive imaginary parts.

\begin{proposition} \label{prop6.1}
Under the conditions of Theorem \ref{thm_resolvent}, we have the following results. For the real symmetric case $(\beta=1)$, we have
	\begin{equation} \label{real7}
	\bb E \langle \ul{G}(z_1)\rangle \langle \ul{G}(z_2)\rangle= \frac{1}{N^2}\Big(f_{2-}(z_1,z_2)+f_3(z_1,z_2)\sum_{i,j}\cal C_4(H_{ij})+f_4(z_1,z_2)\sum_i\cal C_3(H_{ii})\Big)
	+\cal E_1\,,
	\end{equation}
and for the complex Hermitian case $(\beta=2)$, we have
	\begin{equation} \label{ankai}
	\bb E \langle \ul{G}(z_1)\rangle \langle \ul{G}(z_2)\rangle= \frac{1}{N^2}\Big(\,\frac{1}{2}f_{2-}(z_1,z_2)+f_3(z_1,z_2)\sum_{i,j}\cal C_{2,2}(H_{ij})+f_4(z_1,z_2)\sum_i\cal C_3(H_{ii})\Big)+
	\cal E_1\,.
	\end{equation}
	Here $f_{2-}(z_1,z_2)$, $f_3(z_1,z_2)$ and $f_4(z_1,z_2)$ are bounded terms defined in \eqref{488} and \eqref{f34}.
\end{proposition}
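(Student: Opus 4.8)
The plan is to mimic the proof of Theorem~\ref{thm_resolvent}, but now for the quantity $\bb E \langle \ul G(z_1)\rangle\langle \ul G(z_2)\rangle$ where both spectral parameters lie in the upper half plane. The key structural difference is that the resolvent identity $G(z_1)G(z_2) = \frac{G(z_1)-G(z_2)}{z_1-z_2}$ still holds, but now $z_1 - z_2$ need not be small: in fact $|z_1 - z_2| \asymp \omega$ only when $E_1,E_2$ are close, and in general $|z_1-z_2|\asymp 1$. More importantly, the leading term that dominated in the conjugate case — the term $-\frac{2}{N^2(z_1-z_2^*)^2}$ coming from $\frac{2}{N^2}\bb E\ul{GF^{*2}}$ — now becomes $-\frac{2}{N^2(z_1-z_2)^2}$, but since $z_1,z_2$ are both in the upper half plane and separated by $\omega$ in real part but only $\eta$ in imaginary part, this is $O(N^{-2}\omega^{-2}) = O(N^{2\beta-2})$, which is \emph{not} the true size of the answer: the answer is genuinely of order $N^{-2}$. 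The reason is that in the non-conjugate case there is a cancellation, and one must carry the expansion further to see that the $N^{2\beta-2}$ contributions cancel, leaving only the $N^{-2}$ terms $f_5,f_6,f_7$. Concretely, I would first run the initial cumulant expansion exactly as in \eqref{417} (real case) and \eqref{498} (complex case), obtaining
\begin{equation*}
\bb E \langle \ul G\rangle\langle \ul F\rangle = \frac{1}{T}\Big(\bb E\langle\ul G\rangle^2\langle\ul F\rangle + \tfrac1N\bb E\langle\ul{G^2}\rangle\langle\ul F\rangle + \tfrac2{N^2}\bb E\ul{GF^2} - W_2 - W_3\Big) + O(N^{\beta-3})
\end{equation*}
in the real case (with the obvious modification for $\beta=2$), where now $F = G(z_2)$ with $\im z_2 > 0$.

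Next I would compute each term on the right-hand side using Proposition~\ref{prop4.3}(i) (note: in the non-conjugate case \emph{both} $G$ and $F$ are among $\{G(z_1),G^*(z_1),G(z_2),G^*(z_2)\}$ but with the \emph{same} sign of imaginary part, so the relevant bound is part (i) of Proposition~\ref{prop4.3}, applied to $\cal A = \{G,F\}$, giving the gain $b(P) = -\nu_4(P)-(\nu_5(P)+\nu_6(P))/2$ with \emph{no} $\beta$-dependence). This is the crucial point: for $\cal A=\{G,F\}$ with matching imaginary parts, Lemma~\ref{prop4.4}'s analogue holds and there is no $\omega$-enhancement, so a priori $\bb E\langle\ul G\rangle\langle\ul F\rangle = O(N^{-2})$ directly, and each further cumulant term is genuinely smaller. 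I would then expand $\frac2{N^2}\bb E\ul{GF^2}$ via the resolvent identity as in Lemma~\ref{lem4.5}, $\frac1N\bb E\langle\ul{G^2}\rangle\langle\ul F\rangle$ as in Lemma~\ref{lem44} (here the $N^{4\beta-4}$-type correction of the conjugate case does \emph{not} appear at leading order because $(z_1-z_2)$ is not small in the same way, so the analogue produces only $O(N^{-2})$-and-smaller terms), and $W_2$, $W_3$ as in Lemmas~\ref{lem4.6}, \ref{lem4.7}. Each produces a bounded elementary function of $z_1,z_2$ times $N^{-2}$, times the appropriate cumulant sum, plus error terms of the stated order. Collecting these defines $f_5,f_6,f_7$ (to be recorded explicitly in Section~\ref{sec6.1}), with $f_5$ the contribution independent of high cumulants, $f_6$ the coefficient of $\sum_{i,j}\cal C_4(H_{ij})$ (resp.\ $\sum_{i,j}\cal C_{2,2}(H_{ij})$ in the complex case, with the $\tfrac12 f_5$ reflecting the halving already visible in \eqref{498} vs \eqref{417}), and $f_7$ the coefficient of $\sum_i \cal C_3(H_{ii})$.

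The main obstacle I anticipate is bookkeeping the cancellations that bring the naive $O(N^{2\beta-2})$ terms down to $O(N^{-2})$. Unlike the conjugate case, where $\frac2{N^2}\bb E\ul{GF^{*2}}\approx -\frac2{N^2(z_1-z_2^*)^2}$ is genuinely the leading term and survives, here one must verify that $\bb E\ul{GF^2} = \frac{\bb E\ul G - \bb E\ul F}{(z_1-z_2)^2} - \frac{\bb E\ul{F^2}}{z_1-z_2}$ together with the $T^{-1}$ prefactor and Lemma~\ref{lem4.3} (giving $\bb E\ul G = m(z_1)+O(N^{-1})$, $\bb E\ul F = m(z_2)+O(N^{-1})$, $\bb E\ul{F^2} = -\tfrac12 + \tfrac{z_2}{2\sqrt{z_2^2-4}}+O(N^{-1})$) produces, at order $N^{-2}$, a finite answer — in other words the $m(z_1)-m(z_2)$ and $m$-derivative structure must be checked to combine into the bounded functions $f_5$ etc.\ with no residual $\omega^{-2}$ blow-up. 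Since Proposition~\ref{prop4.3}(i) already guarantees the correct a priori size $O(N^{-2})$, no genuinely new analytic input is needed beyond Theorem~\ref{thm_resolvent}'s machinery; the work is to carry out the same chain of Lemmas~\ref{lem4.2}--\ref{lem4.7} (and Lemma~\ref{lem4.8} for $\beta=2$) with $z_2^*$ replaced by $z_2$ throughout, track which terms now contribute only to the error $O(N^{2\beta-7/2})+O(N^{\beta-3})$, and read off $f_5,f_6,f_7$ from the survivors. For compactness I would present the real case in full and remark that the complex case follows by the same substitutions used in Section~\ref{sec:4.4}, using Lemmas~\ref{lem:cumulant_expansion} and \ref{lem:5.1} in place of Lemma~\ref{lem:cumulant_expansion} alone.
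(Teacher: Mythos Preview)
Your approach is essentially the paper's, and it is correct. Two small remarks will streamline it.

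First, Proposition~\ref{prop4.3}(i) as stated applies only to singleton $\cal A$; you need (and implicitly invoke) its extension to $\cal A=\{G,F\}$ with $\im z_1,\im z_2>0$. The paper states this separately as Lemma~\ref{lem6.2}, allowing mixed monomials $G^mF^n$ and defining modified counters $\tilde\nu_1,\dots,\tilde\nu_6$ accordingly; its proof is the same as that of Proposition~\ref{prop4.3}(i), simply never invoking the resolvent identity to separate $G$ and $F$. You clearly have this in mind when you write ``Lemma~\ref{prop4.4}'s analogue holds'', but it should be recorded.

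Second, once Lemma~\ref{lem6.2} is available, the proof is shorter than you suggest: there is no need to rerun the analogues of Lemmas~\ref{lem4.2} and~\ref{lem44}. Applying Lemma~\ref{lem6.2} directly to $T^{-1}\bb E\langle\ul G\rangle^2\langle\ul F\rangle$ and $(TN)^{-1}\bb E\langle\ul{G^2}\rangle\langle\ul F\rangle$ gives $O(N^{-3})$ immediately (since $\tilde\nu_4=3$ and $t+\tilde\nu_4\cdot 1=3$ respectively), so only the three remaining terms $\tfrac{2}{TN^2}\bb E\ul{GF^2}$, $-T^{-1}W_2$, $-T^{-1}W_3$ require explicit computation, exactly as in Lemmas~\ref{lem4.5}--\ref{lem4.7}. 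Your worry about ``bookkeeping the cancellations that bring the naive $O(N^{2\beta-2})$ terms down to $O(N^{-2})$'' is therefore moot for the first two terms, and for $\bb E\ul{GF^2}$ the cancellation is the elementary analytic fact that $\frac{m(z_1)-m(z_2)}{(z_1-z_2)^2}-\frac{m'(z_2)}{z_1-z_2}$ is bounded, which falls out of the same computation as \eqref{419}--\eqref{4522} and yields $f_5$ directly.
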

In the rest of this section we denote $G=G(z_1)$ and $F=G(z_2)$. We start with some notations in addition to what was introduced in Section \ref{sec3}. Let $\cal A=\{G,F\}$, we define the following series of operators  $\tilde\nu_1,\ldots,\tilde\nu_6\col \cal P(\cal A)\to \N$ for each $P \in \cal P$ by 
\begin{enumerate}
\item $\tilde\nu_{1} (P)$ - the sum of the degrees $m+n-1$ of $ \ul {G^mF^n} $ in $P$.
\item $\tilde\nu_2(P)$ - the sum of the degrees $m+n-1$ of  $\langle \ul {G^mF^n} \rangle$ and $(G^mF^n)_{x y}$ in $P$.
\item $\tilde\nu_3(P)$ - the total number of $\ul{G^mF^n}$ in $P$ for $m+n \ge 2$.
\item $\tilde\nu_4(P)$ - the total number of $\langle \ul{G^mF^n} \rangle$ in $P$.
\item $\tilde\nu_5(P)$ - the number of $i_k$ in $\{i_1,i_2,\ldots\}$ such that the total number of appearance of $i_k$ in all factors $(G^mF^n)_{x y}$ of $P$ is odd.
\item $\tilde\nu_6(P)$ - the number of $i_k$ in $\{i_1,i_2,\ldots\}$ such that the total number of appearance of $i_k$ in all factors $(G^mF^n)_{x y}$ of $P$ is even, and $i_k$ appears in at least one $(G^mF^n)_{x y}$ for $x \ne y$.
\end{enumerate}
The following lemma is an analogue of Proposition \ref{prop4.3}.
\begin{lemma}  \label{lem6.2}
Let $(a_{i_1,\ldots,i_n})_{i_1,\ldots,i_n}$ be a family of complex numbers that is uniformly bounded in $i_1,\ldots,i_n$, and let  $\cal A=\{G,F\}$. Fix $P \in P^{(n,t)}(\cal A)$ for some $n \in \bb N$ and $t \in \bb R$. We have
\begin{equation*} 
\sum_{i_1,\ldots,i_n}a_{i_1,\ldots,i_n}P_{i_1,\ldots,i_n} = \OO(N^{t+n+\tilde b(P)})\,,
\end{equation*}
where $b(P) \deq -\tilde \nu_4(P)-(\tilde \nu_5(P)+\tilde \nu_6(P))/2$.
\end{lemma}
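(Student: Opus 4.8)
The plan is to mimic the tree-generation argument used to prove Proposition \ref{prop4.3}, but now for the non-conjugate family $\cal A=\{G,F\}$, using the resolvent identity $GF=(G-F)/(z_1-z_2)$ with $|z_1-z_2|\asymp\omega=N^{-\beta}$ to keep track of the extra factors of $N^{\beta}$ that arise. First I would record the analogue of Lemma \ref{lem:4.2} for the operators $\tilde\nu_1,\dots,\tilde\nu_6$: since $G$ and $F$ have the same imaginary part $\eta=N^{-\alpha}$, the a priori estimate of Lemma \ref{prop4.4} applies verbatim to every monomial $\ul{G^mF^n}$, $(G^mF^n)_{xy}$ and $\langle\ul{G^mF^n}\rangle$ in $\cal A$ (each such object is a resolvent-like quantity at spectral parameter with imaginary part $\eta$, up to bounded scalar factors obtained by expanding $(H-z_1)^{-1}(H-z_2)^{-1}$), giving a rough bound with exponent $\tilde b_0(P)\deq\alpha(\tilde\nu_1(P)+\tilde\nu_2(P))-(1-\alpha)\tilde\nu_4(P)-(1-\alpha)\tilde\nu_5(P)/4$. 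This is the starting point, exactly parallel to the input of Section \ref{sec5}.

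Next I would prove the analogue of Proposition \ref{lem4.4}, namely $\bb E\ul{G^mF^n}=O(1)$ for all fixed $m,n$, by the same three-step tree argument: (i) build the Schwinger--Dyson hierarchy by applying $z_1G=GH-I$ (or $z_2F=FH-I$) and the cumulant expansion Lemma \ref{lem:cumulant_expansion} to $\bb E\ul{G^mF^n}$, each expansion producing lower-degree terms, products of two centred traces, and higher-cumulant terms estimated via Lemma \ref{useful}; whenever a product $\ul{G^aF^b}\cdot\ul{G^cF^d}$ with mixed factors appears one applies \eqref{reso}-type identities $GF=(z_1-z_2)^{-1}(G-F)$ to reduce back to $\cal P(\{G,F\})$, paying a bounded scalar each time since $|z_1-z_2|^{-1}=O(N^\beta)$ but this is absorbed in the bookkeeping because it does not change the $\alpha$-exponent; (ii) bound $\tilde\nu_3$ along the tree using the rough estimate; (iii) conclude finite depth via a decrease of order $N^{-\chi}$ at all but finitely many generations. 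The one genuinely new point compared with Section \ref{sec:4.1} is that mixed resolvent products cannot be ``split'' the way $\ul{G^a}\,\bb E\ul{G^{b}}$ was; here one first linearizes them through $GF=(z_1-z_2)^{-1}(G-F)$ before the degree- and $\nu$-counting, so the combinatorial lemma (the analogue of Lemma \ref{lem5.3}) has one extra case. I expect this linearization step to be the main obstacle: one must check that repeated use of \eqref{reso} terminates and that the scalar prefactors $(z_1-z_2)^{-k}$ produced stay bounded, which follows from $\omega\le\tau/3$ being bounded below only in the sense $|z_1-z_2|\ge\eta$ — so in fact the prefactors are $O(N^{k\beta})$ and one must verify these are compensated, which they are because each application of \eqref{reso} also lowers $\tilde\nu_2$, and it is this trade-off that the operators $\tilde\nu_1,\dots,\tilde\nu_6$ are designed to track.

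Granting $\bb E\ul{G^mF^n}=O(1)$, the improved bound of the lemma (exponent $\tilde b(P)=-\tilde\nu_4(P)-(\tilde\nu_5(P)+\tilde\nu_6(P))/2$) follows exactly as Proposition \ref{prop4.3}(i) follows from Proposition \ref{lem4.4} and Lemma \ref{lem1}: one replaces every $\ul{G^mF^n}$ by $O(1)$ to get an intermediate estimate with exponent $\alpha\tilde\nu_2(P)-(1-\alpha)\tilde\nu_4(P)-(1-\alpha)\tilde\nu_5(P)/4$ (the analogue of $b_1$), then runs a second tree in which every cumulant expansion produces a strict gain of order $N^{-\chi}$ in the $b_1$-exponent while never increasing the $\tilde b$-exponent, so after boundedly many steps all vertices are $O(N^{t+n+\tilde b(P)})$. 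As in Sections \ref{sec4.2}--\ref{sec4.3} the differentiation rules \eqref{3.15} (and \eqref{4.9} for the complex case) show that applying $\partial_{ij}$ to an off-diagonal entry produces the extra odd/even index that makes $\tilde\nu_5,\tilde\nu_6$ behave, and Lemma \ref{useful} controls the higher cumulants; the estimate of the remainder terms is the usual routine argument (cf. Lemma 4.6(i) of \cite{HK}). Finally the complex Hermitian case is handled exactly as in Section \ref{sec4.4}, replacing Lemma \ref{lem:cumulant_expansion} by Lemma \ref{lem:5.1} for the off-diagonal entries and using the differentiation rule \eqref{complexdiff}; the only change is that fewer second-cumulant terms appear, which does not affect the resulting exponents.
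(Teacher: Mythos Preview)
Your plan contains a genuine misstep: you propose to use the resolvent identity $GF=(G-F)/(z_1-z_2)$ to linearize mixed products, but this is precisely what must \emph{not} be done here, and the paper says so explicitly. The target exponent is $\tilde b(P)=-\tilde\nu_4(P)-(\tilde\nu_5(P)+\tilde\nu_6(P))/2$, which does not involve $\tilde\nu_2$. Each application of the resolvent identity multiplies the scalar prefactor by $(z_1-z_2)^{-1}=O(N^\beta)$ (so $t\mapsto t+\beta$) while leaving $\tilde\nu_4,\tilde\nu_5,\tilde\nu_6$ unchanged; hence $t+n+\tilde b$ increases by $\beta$ at every such step. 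Your claimed invariant ``never increasing the $\tilde b$-exponent'' in the second tree is therefore violated, and the argument collapses unless $\beta=0$. The trade-off you describe (each application of \eqref{reso} lowers $\tilde\nu_2$) is real, but it only protects the intermediate $b_1$-type exponent $\alpha\tilde\nu_2-(1-\alpha)\tilde\nu_4-(1-\alpha)\tilde\nu_5/4$; it does nothing for $\tilde b$.

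The correct route is simpler than you think. The operators $\tilde\nu_1,\dots,\tilde\nu_6$ are defined on the \emph{mixed} monomials $\ul{G^mF^n}$, $\langle\ul{G^mF^n}\rangle$, $(G^mF^n)_{xy}$, so one works in the class that already contains these as atomic objects and \emph{never} separates $G$ from $F$. Since $G$ and $F$ commute, any string of $G$'s and $F$'s collapses to some $G^mF^n$; thus the Schwinger--Dyson expansion of $\bb E\ul{G^mF^n}$ (or of the mixed analogues of $Q_{r,s}$, $Q_r$) stays within this class with no need for \eqref{reso}. Because $\im z_1=\im z_2=\eta>0$, the a priori bounds of Lemma~\ref{prop4.4} apply to $(G^mF^n)_{xy}$ and $\langle\ul{G^mF^n}\rangle$ exactly as to $(G^{m+n})_{xy}$ and $\langle\ul{G^{m+n}}\rangle$, and the self-energy denominator $-z_1-\bb E\ul G-\bb E\ul F\approx -\sqrt{z_1^2-4}$ is bounded away from zero in the bulk. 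With these observations the entire argument of Proposition~\ref{prop4.3}(i) carries over verbatim with $G^m$ replaced by $G^mF^n$; this is what the paper means by ``the proof is very close to that of Proposition~\ref{prop4.3}(i)''.
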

The proof of Lemma \ref{lem6.2} is very close to that of Proposition \ref{prop4.3}(i). Instead of considering matrices $G^m$, we consider $G^mF^n$ instead, and we never apply the resolvent identity $GF=(G-F)/(z_1-z_2)$ to separate $G$ and $F$. We omit the details.
\subsection{The real case} \label{sec6.1} In this section we prove \eqref{real7}. The proof is very similar to that of Theorem \ref{thm_resolvent}, namely we will repeatedly use the cumulant expansion formula Lemma \ref{lem:cumulant_expansion} and estimate the results by Lemma \ref{lem6.2}. As in \eqref{417}, we can apply Lemma \ref{lem:cumulant_expansion} and get
\begin{equation} \label{617}
\bb E \langle \ul{G} \rangle \langle \ul{F} \rangle=\frac{1}{T}\bigg(\bb E \langle \ul{G} \rangle^2 \langle \ul{F} \rangle+\frac{1}{N}\bb E \langle\ul{G^2}\rangle \langle \ul{F}\rangle+\frac{2}{N^2}\bb E \ul{GF^{2}}- W^{(12)}_2-W^{(12)}_3\bigg)+\OO(N^{-3})\,,
\end{equation}
where $T=-z_1-2\bb E \ul{G}$, and
\begin{equation} \label{W12k}
W^{(12)}_k\deq \frac{1}{N} \sum_{i,j}\frac{1}{k!}\cal C_{k+1}(H_{ji}) \bb E \frac{\partial^k (G_{ij}\langle\ul{F}\rangle)}{\partial H^k_{ji}}\,.
\end{equation}
From Lemma \ref{lem6.2} we see that 
\begin{equation} \label{666laotieshuangji66666}
\frac{1}{T} \bb E \langle \ul{G} \rangle^2 \langle \ul{F} \rangle =\OO(N^{-3})  \ \ \ \mbox{and} \ \ \ \frac{1}{TN}\bb E \langle \ul{G^2}\rangle \langle \ul{F} \rangle =\OO(N^{-3})\,,
\end{equation}
thus it remains to estimate the third, fourth, and fifth term on the right-hand side of \eqref{617}.

We first look at $2/(TN^2) \bb E \ul{GF^2}$, and the proof is similar to how we calculated $2/(TN^2) \bb E \ul{GF^{*2}}$ in Lemma \ref{lem4.5}. By Lemma \ref{lem4.3} and resolvent identitywe have
\begin{equation} \label{610}
\frac{2}{TN^2} \bb E \ul{GF^2}=\frac{2}{N^2}\frac{1}{-z_1-2\bb E \ul{G}}\bigg(\frac{\bb E \ul{G}-\bb E\ul{F}}{(z_1-z_2)^2}-\frac{\bb E \ul{F^2}}{(z_1-z_2)}\bigg)=\frac{1}{N^2}f_{2-}(z_1,z_2)+\cal E_1
\end{equation}
The computation of $-T^{-1}W^{(12)}_2$ is similar to that of $-T^{-1}W^{(1)}_2$ in Lemma \ref{lem4.6}, and we have
\begin{equation}
-\frac{1}{T}W^{(12)}_2=\frac{1}{N^2}f_4(z_1,z_2)\sum_{i}\cal C_3(H_{ii})+\cal E_1\,.
\end{equation}
The computation of $-T^{-1}W^{(12)}_3$ is similar to that of $-T^{-1}W^{(1)}_3$ in Lemma \ref{lem4.7}, and we have
\begin{equation}  \label{8.8}
-\frac{1}{T}W^{(12)}_3=\frac{1}{N^2}f_3(z_1,z_2)\sum_{i,j}\cal C_4(H_{ij})+\cal E_1\,.
\end{equation}
By inserting \eqref{W12k}-\eqref{8.8} into \eqref{617} we complete the proof.

\subsection{The complex case} \label{sec6.2} In this section we prove \eqref{ankai}. The proof is very similar to that of Theorem \ref{thm_resolvent}, namely we will repeatedly use the cumulant expansion formula Lemmas \ref{lem:cumulant_expansion} and \ref{lem:5.1}, and estimate the results by Lemma \ref{lem6.2}. As in \eqref{498}, we can apply Lemmas \ref{lem:cumulant_expansion} and \ref{lem:5.1} to get
\begin{equation} \label{614}
\bb E \langle \ul{G} \rangle \langle \ul{F} \rangle=\frac{1}{T}\bigg(\bb E \langle \ul{G} \rangle^2 \langle \ul{F} \rangle+\frac{1}{N^2}\bb E \ul{GF^{2}}- W^{(13)}_2-W^{(13)}_3\bigg)+\OO(N^{-3})\,,
\end{equation}
where $T=-z_1-2\bb E \ul{G}$, and
\begin{equation*} 
W^{(13)}_k\deq \frac{1}{N} \sum_{i,j}\sum_{p+q=k}\frac{1}{p!\,q!(1+\delta_{ij})^k}\cal C_{p,q+1}(H_{ij}) \bb E \frac{\partial^k (G_{ij}\langle\ul{F}\rangle)}{\partial H^p_{ji} \partial H^q_{ji}}\,.
\end{equation*}
Lemma \ref{lem6.2} implies
\begin{equation} \label{666laotieshuangji6666666666}
\frac{1}{T} \bb E \langle \ul{G} \rangle^2 \langle \ul{F} \rangle =\OO(N^{-3})  \,,
\end{equation}
thus it remains to estimate the third, fourth, and fifth term on the right-hand side of \eqref{614}. As in Section \ref{sec6.1}, we have
\begin{equation}
\frac{1}{TN^2}\bb E \ul{GF^{2}}=\frac{1}{2N^2}f_{2-}(z_1,z_2)+\cal E_1\,,
\end{equation}
\begin{equation}
-\frac{1}{T}W^{(13)}_2= \frac{1}{N^2}f_4(z_1,z_2)\sum_{i}\cal C_3(H_{ii})+\cal E_1\,,
\end{equation}
and
\begin{equation} \label{622}
-\frac{1}{T}W^{(13)}_3=\frac{1}{N^2}f_3(z_1,z_2)\sum_{i,j}\cal C_{2,2}(H_{ij})+\cal E_1\,.
\end{equation}
Recall the definitions of $f_{2-}(z_1,z_2),f_3(z_1,z_2),$ and $f_4(z_1,z_2)$ in \eqref{488} -- \eqref{f34}. Inserting \eqref{666laotieshuangji6666666666} -- \eqref{622} into \eqref{614} completes the proof.

\appendix 
\section{Comparison to Gustavsson's theorem} \label{sec:Gustavsson}
In this section we compare our result to the ones in \cite{G2005} and \cite{O2010}. Roughly, the conclusion is that the question addressed in \cite{G2005,O2010} is independent from the one addressed in our work, and neither implies the other.

In Gustavsson's work \cite{G2005}, the fluctuation of a single eigenvalue was first established to be Gaussian for GUE, and it was also showed that the joint limit distribution of eigenvalues is a Gaussian process, provided that the eigenvalue are separated by a mesoscopic distance. In \cite{O2010}, the result was extended to GOE and, by moment matching, to a class of Wigner matrices.

For the rest of the section let us adopt the assumptions in Theorem \ref{mainthm}, and in addition we assume $H$ is GOE. Recall the notations $\alpha \deq -\log_N \eta$ and $\gamma \deq -\log_N \omega$. We would like to compute $\int p_{E}(u,v) f_{+}(u)g_{-}(v) \,\mathrm{d}u\,\mathrm{d}v$. By \eqref{n5.3} we have
\begin{equation} \label{A.1}
\int p_{E}(u,v) f_{-}(u)g_{+}(v) \,\mathrm{d}u\,\mathrm{d}v=\sum_{i,j}\frac{1}{N^2\eta^2}\bb E \Big\langle f\Big(\frac{(\lambda_i-E)\varrho_E-\omega}{\eta}\Big) \Big \rangle \Big\langle g\Big(\frac{(\lambda_j-E)\varrho_E+\omega}{\eta}\Big) \Big \rangle\,.
\end{equation}
Let $\lambda_1\leq\cdots\leq\lambda_N$ be the eigenvalues of $H$, and for $k=1,2,...,N$, let the quantile $\gamma_k$ be the typical location of $\lambda_k$, i.e.\ it satisfies $k/N=\int_{-2}^{\gamma_k} \varrho_x \dd x$. By our assumptions $E \in [-2+\tau,2-\tau]$, $f,g \in C^{\infty}_c(\bb R)$, and a standard eigenvalue rigidity result (e.g.\ Theorem 2.2,\cite{EYY3}), we see that the sum in \eqref{A.1} can be reduced to
\begin{multline*} 
\int p_{E}(u,v) f_{-}(u)g_{+}(v) \,\mathrm{d}u\,\mathrm{d}v
\\
=\sum_{i\in A,j \in B}\frac{1}{N^2\eta^2}\bb E \Big\langle f\Big(\frac{(\lambda_i-E)\varrho_E-\omega}{\eta}\Big) \Big \rangle \Big\langle g\Big(\frac{(\lambda_j-E)\varrho_E+\omega}{\eta}\Big) \Big \rangle+\OO(N^{-10})\,,
\end{multline*}
where $A \deq \{i\col |(\gamma_i-E)\varrho_E-\omega| \leq \eta\log N\}$, $B \deq \{j\col |(\gamma_j-E)\varrho_E+\omega| \leq \eta\log N\}$. Clearly for $i \in A$ and $j\in B$, we have $i-j \asymp N\omega= N^{1-\gamma}$, and Theorem 5 of \cite{O2010} shows that
\begin{equation} \label{A.3}
\bigg(\frac{\pi \varrho_{\gamma_i}N(\lambda_i-\gamma_i)}{\sqrt{\log N}}, \frac{\pi \varrho_{\gamma_j}N(\lambda_j-\gamma_j)}{\sqrt{\log N}}\bigg)
\end{equation}
converges weakly to the Gaussian random vector $N(\f \mu,\Sigma)$ with $$\f \mu=
\begin{pmatrix}
0 \\ 0
\end{pmatrix}
\ \ \  \mbox{ and } \ \ \ \Sigma=\bigg(\begin{array}{cc}
1 & \gamma \\
\gamma & 1 \\
\end{array}\bigg)\,,$$ 
as $N \to \infty$. Also, by $|\gamma_i-E|=\OO(\omega)$ we see that $\varrho_E=\varrho_{\gamma_i}+\OO(\omega)$. Hence the random vector
\begin{equation} \label{A.4}
\bigg(\frac{\pi \varrho_{E}N(\lambda_i-\gamma_i)}{\sqrt{\log N}}, \frac{\pi \varrho_{E}N(\lambda_j-\gamma_j)}{\sqrt{\log N}}\bigg)
\end{equation}
has the same weak limit as the one in \eqref{A.3}. With the rescaling $u_k\deq (\lambda_k-E)N\varrho_E$ for $k=1,2,...,N$, we have
\begin{equation} \label{A.5}
\int p_{E}(u,v) f_{-}(u)g_{+}(v) \,\mathrm{d}u\,\mathrm{d}v=\sum_{i\in A,j \in B}\frac{1}{N^2\eta^2}\bb E \Big\langle f\Big(\frac{u_i-N\omega}{N\eta}\Big) \Big \rangle \Big\langle g\Big(\frac{u_j+N\omega}{N\eta}\Big) \Big \rangle+\OO(N^{-10})\,,
\end{equation}
Now let us compute the right-hand side of \eqref{A.5} by using the weak limit of \eqref{A.4}, i.e.\ that the random vector $(u_i,u_j)$ has asymptotically (as $N \to \infty$) the same law as the Gaussian vector $(\tilde u_i, \tilde u_j) \eqdist N(\f \mu', \Sigma')$, where
$$
\f \mu'=
\begin{pmatrix}
(\gamma_i-E)N\varrho_E \\ (\gamma_j-E)N\varrho_E
\end{pmatrix}
\eqd
\begin{pmatrix}
m_i \\ m_j
\end{pmatrix}
\ \ \mbox{\ and\ }\ \ \ \Sigma'=\frac{\log N}{\pi^2}\bigg(\begin{array}{cc}
1 & \gamma \\
\gamma & 1 \\
\end{array}\bigg)\,.
$$
By Taylor expansion, we have 
\begin{multline} \label{AA}
\sum_{i\in A,j \in B}\frac{1}{N^2\eta^2}\bb E \Big\langle f\Big(\frac{u_i-N\omega}{N\eta}\Big) \Big \rangle \Big\langle g\Big(\frac{u_j+N\omega}{N\eta}\Big) \Big \rangle\\
=\hspace{-0.2cm}\sum_{i\in A,j \in B}\frac{1}{N^2\eta^2}\bb E \bigg\langle f\Big(\frac{m_i-N\omega}{N\eta}\Big)+\frac{1}{N\eta}f'\Big(\frac{m_i-N\omega}{N\eta}\Big)(u_i-m_i)
+\frac{1}{2N^2\eta^2}f''\Big(\frac{m_i-N\omega}{N\eta}\Big)(u_i-m_i)^2+\cdots \bigg \rangle\\
\cdot \bigg\langle g\Big(\frac{m_j+N\omega}{N\eta}\Big)+\frac{1}{N\eta}g'\Big(\frac{m_j+N\omega}{N\eta}\Big)(u_j-m_j)
+\frac{1}{2N^2\eta^2}g''\Big(\frac{m_j+N\omega}{N\eta}\Big)(u_j-m_j)^2+\cdots \bigg\rangle\,.
\end{multline}
The first non-vanishing term is the one proportional to $f'g'$. We now compute it asymptotically using Gustavsson's theorem, by replacing $(u_i,u_j)$ with $(\tilde u_i, \tilde u_j)$. The result is
\begin{align} \label{A.6}
&\mspace{-20mu}\sum_{i\in A,j \in B}\frac{1}{N^4\eta^4}\bb E \bigg\langle f'\Big(\frac{m_i-N\omega}{N\eta}\Big)(\tilde u_i-m_i) \bigg \rangle \bigg\langle g'\Big(\frac{m_j+N\omega}{N\eta}\Big)(\tilde u_j-m_j) \bigg \rangle \notag \\
&=\sum_{i\in A,j \in B}\frac{\log N}{N^4\eta^4\pi^2} f'\Big(\frac{m_i-N\omega}{N\eta}\Big) g'\Big(\frac{m_j+N\omega}{N\eta}\Big)
\notag \\
&=\sum_{j \in B}\frac{\log N}{N^3\eta^3\pi^2}g'\Big(\frac{m_j+N\omega}{N\eta}\Big)\int(1+\OO(\omega))f'(x)\, \dd x+\OO(N^{-10})
\notag \\
&=\frac{\log N}{N^2\eta^2\pi^2}\int(1+\OO(\omega))f'(x)g'(y) \,\dd x\, \dd y+\OO(N^{-10})\prec N^{2\alpha-\gamma-2}\,,
\end{align}
where in the second and third step we used the fact that $m_{k+1}-m_k=1+\OO(\omega)$ for $k \in A\cup B$, and the Riemann sum of a smooth compactly supported function converges to its integral to any polynomial order (by the Poisson summation formula). Similarly, we can show that all other terms in \eqref{AA} are bounded by $\OO_{\prec}(N^{4\alpha-4})$. Thus, under the assumption that we can use Gustavsson's theorem and the error from the replacement of $(u_i, u_j)$ with the Gaussian vector $(\tilde u_i, \tilde u_j)$ is negligible, we get, together with \eqref{A.5},
\begin{equation*}
\int p_{E}(u,v) f_{-}(u)g_{+}(v) \prec N^{2\alpha-\gamma-2}+N^{4\alpha-4}\,.
\end{equation*}
We find that the right-hand side is much smaller than $N^{2\gamma-2}$ for $\alpha>3/4$ and $\gamma>2\alpha/3$. However, by Theorem \ref{mainthm} we know that the leading term in $\int p_{E}(u,v) f_{-}(u)g_{+}(v)$ is of order $N^{2\gamma-2}$. We conclude that the use of Gustavsson's theorem leads to a wrong result in at least the regime $\alpha>3/4$ and $\gamma>2\alpha/3$. This argument shows that the computation of $\int p_{E}(u,v) f_{-}(u)g_{+}(v)$ using Gustavsson's theorem leads to the wrong result in a rather flagrant fashion: the obtained quantity is much smaller than than the true value. In fact, we believe that more generally Gustavsson's result cannot be used to recover the correct mesoscopic density-density correlations in any regime: the error made between \eqref{AA} and \eqref{A.6} is never affordable.

The issue lies in the fact that, in order to compute sums of the form \eqref{AA}, even to leading order, one needs much stronger control on the joint distribution of the individual eigenvalues than is provided in \cite{G2005} and \cite{O2010}. This is a manifestation of the fact, mentioned in the introduction, that eigenvalue density-density correlations are much weaker than their location-location correlations. Going from the latter to the former consequently requires very precise asymptotics for the latter.

\section{Comparison to results on macroscopic linear statistics} \label{sec:linstat}

In this appendix we give the short calculation that shows how our main result, Theorem \ref{mainthm}, recovers the well-known covariance formula for macroscopic linear statistics of Wigner matrices \cite{LP,KKP96}. This corresponds to the extreme case $\omega \asymp \eta \asymp 1$ in Theorem \ref{mainthm}.

Let for shortness $f^{\eta}(x)=\eta^{-1}f(((x-E)\varrho_E-\omega)/\eta)$ and $g^{\eta}(x)=\eta^{-1}g(((x-E)\varrho_E+\omega)/\eta)$. It follows from \eqref{2.6}, \eqref{FFF} and \eqref{n5.3} that there exists $c=c(\tau)>0$ such that
\begin{equation*} 
\begin{aligned}
&\,\frac{1}{N^2}\cov (\tr f^\eta(H);g^{\eta}(H)) =\int p_{E}(u,v) f_{+}(u)g_{-}(v)  \,\mathrm{d}u\,\mathrm{d}v\\
=&\,\int \Big(-\frac{1}{\pi^2(u-v)^2}+\frac{F_1(u,v)}{N^2\kappa_E^2}+\frac{F_2(u,v)}{N^2\kappa_E^2}\sum_{i,j}\cal C_4(H_{ij})\Big) f_{+}(u)g_{-}(v)  \,\mathrm{d}u\,\mathrm{d}v+\OO(N^{-2-c})+\OO\Big({\frac{1}{N^4\omega^4}}\Big)\\
=&\,\frac{1}{\pi^2}\int \bigg(-\frac{1}{N^2(x_1-x_2)^2}\cdot\frac{4-x_1x_2}{\sqrt{4-x_1^2}\sqrt{4-x_2^2}}+\frac{(x_1^2-2)(x_2^2-2)}{2N^2\sqrt{(4-x_1^2)(4-x_2^2)}}\sum_{i,j}\cal C_4(H_{ij})\bigg) f^{\eta}(x_1)g^{\eta}(x_2)\\
&+\OO\Big(N^{-2-c}+{\frac{1}{(u-v)^4}}\Big)\,.
\end{aligned}
\end{equation*}
Here $F_1,F_2$ are defined as in \eqref{FFF}. We see that when $\omega \gg N^{-1/2}$ the integral RHS of the above equation dominates, and it matches Equation (VI.81) in \cite{KKP96}. 

\section{Remark on the distribution of the diagonal entries} \label{sec:diag_general}

Theorems \ref{mainthm} and \ref{thm_resolvent} can be extended to a more general case where we relax the assumption on the variance of $H_{ii}$ in Definition \ref{def:wigner}. Here we summarize the appropriate generalizations of Theorems \ref{thm_resolvent} and \ref{thm:main}, whose proofs are simple modifications of the arguments given in the previous sections. This generalization illustrates the great sensitivity of our results to the precise distribution of the matrix entries.

Let $\tilde{H}$ be a Wigner matrix where the condition $\bb E |\sqrt{N}H_{ii}|^2=2/\beta$ is replaced by $\bb E|\sqrt{N}\tilde{H}_{ii}|^2=\zeta_i$, and $\max_i \zeta_i  = \OO(1)$. Let $\tilde{G}(z)\deq (\tilde{H}-z)^{-1}$, and let $\tilde{\Upsilon}_{E,\beta} $ be the analogue of $\Upsilon_{E,\beta}$ defined in \eqref{2.6}--\eqref{2.7}.

Theorem \ref{thm_resolvent} is generalized as follows. For the real symmetric case ($\beta=1$), we have 
\begin{multline*}
\cov \big(\tilde{\ul{G}}(z_1), \tilde{\ul{G}}(z_2^*) \big)
\\
=\cov \big(\ul{G}(z_1), \ul{G}(z_2^*) \big)+ \bigg(\frac{-2+2\ii\im m(E)^2}{N^4(z_1-z_2^*)^2}+\frac{m(z_1)m(z_2^*)}{N^3\sqrt{z_1^2-4}\sqrt{z_2^{*2}-4}}\bigg)\sum_i(\zeta_i-2)+\cal E_1\,,
\end{multline*}
and for the complex Hermitian case ($\beta=2$), we have
\begin{multline*}
\cov \big(\tilde{\ul{G}}(z_1), \tilde{\ul{G}}(z_2^*) \big)
\\
=\cov \big(\ul{G}(z_1), \ul{G}(z_2^*) \big)+ \bigg(\frac{-1+\ii\im m(E)^2}{N^4(z_1-z_2^*)^2}+\frac{m(z_1)m(z_2^*)}{N^3\sqrt{z_1^2-4}\sqrt{z_2^{*2}-4}}\bigg)\sum_i(\zeta_i-1)+\cal E_1\,.
\end{multline*}

Theorem \ref{thm:main} is generalized as follows.
We have
\begin{equation*}
\tilde{\Upsilon}_{E,1}(u,v)=\Upsilon_{E,1}(u,v)+\bigg(\frac{F_4(u,v)}{N^3\kappa_E^2}-\frac{1}{N^2\pi^2(u-v)^2}\bigg)\sum_i(\zeta_i-2)
+\cal E
\end{equation*}
as well as
\begin{equation*}
\tilde{\Upsilon}_{E,2}(u,v)=\Upsilon_{E,2}(u,v)+\bigg(\frac{F_4(u,v)}{N^3\kappa_E^2}-\frac{1}{2N^2\pi^2(u-v)^2}\bigg)\sum_i(\zeta_i-1)
+\cal E\,.
\end{equation*}
where $$F_4(u,v)=g_4\Big(\frac{u-E}{N\varrho_{E}},\frac{v-E}{N\varrho_{E}}\Big)\ \ \ \mbox{ and }\ \ \ g_4(x_1,x_2)=\frac{x_1x_2}{\sqrt{4-x_1^2}\sqrt{4-x_2^2}
}\,.$$

{\small

\bibliography{bibliography} 

\bibliographystyle{amsplain}
}

\paragraph{Acknowledgements}
We gratefully acknowledge funding from the European Research Council (ERC) under the European Union's Horizon 2020 research and innovation programme (grant agreement No.\ 715539\_RandMat) and from the Swiss National Science Foundation through the SwissMAP grant.

\bigskip

\noindent
Yukun He, University of Z\"urich, Institute for Mathematics, Winterthurerstrasse 190, 8057 Z\"urich, Switzerland.
Email: \href{mailto:yukun.he@math.uzh.ch}{yukun.he@math.uzh.ch}.
\\[1em]
Antti Knowles, 
University of Geneva, Section of Mathematics, 2-4 Rue du Li\`evre, 1211 Gen\`eve 4, Switzerland. 
Email:
\href{mailto:antti.knowles@unige.ch}{antti.knowles@unige.ch}.

\end{document}